\newcommand{\black}{\color{black}}
\newcommand{\mR}{\mathbb{R}}
\newcommand{\mL}{\mathcal{L}}
\newcommand{\diag}{{\rm diag}}
\def\P{{\mathcal P}}
\def\0{{\boldsymbol 0}}
\def\ba{{\boldsymbol{a}}}
\def\bb{{\boldsymbol{b}}}
\def\N{{\mathcal N}}
\def\bl{{\boldsymbol{l}}}
\def\br{{\boldsymbol{r}}}
\def\bbeta{{\boldsymbol{\beta}}}
\def\balpha{{\boldsymbol{\alpha}}}
\def\bw{{\boldsymbol{w}}}
\def\bp{{\boldsymbol{p}}}
\def\bq{{\boldsymbol{q}}}
\def\be{{\boldsymbol{e}}}
\def\bv{{\boldsymbol{v}}}
\def\bx{{\boldsymbol{x}}}
\def\bu{{\boldsymbol{u}}}
\def\bd{{\boldsymbol{d}}}
\def\bl{{\boldsymbol{l}}}
\def\bz{{\boldsymbol{z}}}
\def\blambda{{\boldsymbol{\lambda}}}
\def\tlambda{{\boldsymbol{\lambda}}^\star}
\def\tx{{\boldsymbol{x}}^\star}
\def\tu{{\boldsymbol{u}}^\star}
\def\tz{{\boldsymbol{z}}^\star}
\def\bblambda{\bar{\boldsymbol{\lambda}}}
\def\bardelta{\bar{\delta}}
\def\barkappa{\bar{\kappa}}
\def\Id{\text{Id}}
\def\hbz{{\hat{\boldsymbol{z}}}}
\def\hbx{{\hat{\boldsymbol{x}}}}
\def\hbu{{\hat{\boldsymbol{u}}}}
\def\hblambda{{\hat{\boldsymbol{\lambda}}}}
\def\bbeta{{\boldsymbol{\beta}}}
\def\tbx{{\tilde{\boldsymbol{x}}}}
\def\tbu{{\tilde{\boldsymbol{u}}}}
\def\bbx{\bar{\boldsymbol{x}}}
\def\bbz{\bar{\boldsymbol{z}}}
\def\tbz{{\tilde{\boldsymbol{z}}}}
\def\tblambda{{\tilde{\blambda}}}
\def\tbd{{\tilde{\boldsymbol{d}}}}
\def\tbw{{\tilde{\boldsymbol{w}}}}
\newtheoremstyle{mytheoremstyle} 
{0em}                    
{0em}                    
{\normalfont}                   
{1em}                           
{\itshape}                   
{.}                          
{.5em}                       
{}  
\theoremstyle{mytheoremstyle}
\newtheorem{theorem}{Theorem}
\newtheorem{definition}{Definition}
\newtheorem{lemma}{Lemma}
\newtheorem{remark}{Remark}
\newtheorem{corollary}{Corollary}
\newtheorem{assumption}{Assumption}
\newcommand{\BlackBox}{\rule{1.5ex}{1.5ex}}  
\def\QED{~\rule[-1pt]{5pt}{5pt}\par\medskip}
\newenvironment{proof}{\par\noindent{\bf Proof\ }}{\hfill\BlackBox\\[2mm]}
\newcommand{\rbr}[1]{\left(#1\right)}
\newcommand{\cbr}[1]{\left\{#1\right\}}
\begin{document}
\title{Superconvergence of Online Optimization \\for Model Predictive Control}



\author{Sen~Na, Mihai Anitescu,~\IEEEmembership{Member,~IEEE}%
\thanks{S. Na is with the Department of Statistics, University of Chicago, Chicago, IL 60615, USA (email: senna@uchicago.edu).}%
\thanks{M. Anitescu is with the Mathematics and Computer Science Division, Argonne National Laboratory, Lemont, IL 60439, USA (email: anitescu@mcs.anl.gov).}%
}

\markboth{Accepted by IEEE TRANSACTIONS ON AUTOMATIC CONTROL}%
{Na \MakeLowercase{\textit{et al.}}: Superconvergence of Online Optimization for Model Predictive Control}

\maketitle

\begin{abstract}

We develop a one-Newton-step-per-horizon, online, lag-$L$, model predictive control (MPC) algorithm for solving discrete-time, equality-constrained, nonlinear dynamic programs. Based on recent sensitivity analysis results for the target problems class, we prove that the approach exhibits a behavior that we call \textit{superconvergence}; that is, the tracking error with respect to the full horizon solution is not only stable for successive horizon shifts, but also decreases with increasing shift order to a minimum value that decays exponentially in the length of the receding horizon. The key analytical step is the decomposition of the one-step error recursion of our algorithm into \textit{algorithmic error} and \textit{perturbation error}. We show that the perturbation error decays exponentially with the lag between two consecutive receding horizons, while~the algorithmic error, determined by Newton's method, achieves quadratic convergence instead. Overall this approach induces our local exponential convergence result in terms of the receding horizon length for suitable values of $L$. Numerical experiments validate our theoretical findings.

\end{abstract}

%

\IEEEpeerreviewmaketitle

\section{Introduction}

\IEEEPARstart{M}{odel} predictive control (MPC), also known as receding horizon control or moving horizon control, is a central paradigm of modern engineering. It meets the specialized~control needs of power plants and petroleum refineries, and has now been widely used in industrial areas such as food processing, automotive transportation, metallurgy, and artificial intelligence (AI). We refer to \cite{Broeck2011model, Dunbar2012Distributed, Kraus2013Moving} for recent applications of MPC.

A simplified version of nonlinear MPC, which excludes~the inequality constraints (that could be studied by generalizing the proposed framework with more involved analysis, e.g. \cite{Xu2019Exponentially}), aims to solve the following discrete-time, equality-constrained, nonlinear dynamic programming (NLDP), $\P_{0:N}(\bd)$:
\begin{equation}\label{pro:1}
\begin{aligned}
\min_{\bx, \bu} & \sum_{k=0}^{N-1} g_k(\bx_k, \bu_k; \bd_k) + g_N(\bx_N),\\
\text{s.t.}\text{\ } & \bx_{k+1} = f_k(\bx_k, \bu_k; \bd_k), \text{\ } k = 0,1,\ldots, N-1, \\
&\bx_0 = \bbx_0,
\end{aligned}
\end{equation}
where $\bx_k\in\mR^{n_x}$ is the state variable, $\bu_k\in\mR^{n_u}$ is the control variable, $\bd_k\in\mR^{n_d}$ is the reference variable; $g_k: \mR^{n_x\times n_u\times n_d}\rightarrow \mR$ and $f_k: \mR^{n_x\times n_u\times n_d}\rightarrow \mR^{n_x}$ are the cost function and the dynamic function for stage $k$, respectively; $\bbx_0$, also denoted as $\bd_{-1}$, is the initial state variable; and $N$ is the temporal horizon length, which is supposed to be large. We assume $g_k, f_k$ (and $g_N$) are twice continuously differentiable throughout the paper; and some other extra conditions, such as local Lipschitz continuity, will be imposed  later. Problem \eqref{pro:1} is also called an optimal control problem. It is parameterized by $\bd = \bd_{-1:N-1} = (\bbx_0; \bd_0; \ldots; \bd_{N-1})$, which is also the input problem data. Here, we use semicolons in $f_k, g_k$ to separate input variables and decision variables. A similar setup for MPC without inequalities is also studied in \cite{Diehl2005Real, Diehl2005Nominal, Wynn2014Convergence}.

One of the central issues in solving \eqref{pro:1} is the \textit{time to solution}: the control action $\bu_k$ must be known \textit{before} time $k$ so it can be applied. For long horizons this is impossible. Moreover, the reference $\bd$ comes into the system as a data stream and is rarely known at time $0$ all the way to $N$. Consequently, a MPC approach, which we will set up in detail in Section~\ref{sec:3}, solves much shorter horizon problems (see \eqref{pro:2}): starting at time $n_1$, it first solves a subproblem within $[n_1, n_2]$, then advances the computation time by lag $L$, incorporates the solution with new data, solves a subsequent subproblem from $n_1+L$  to $n_2+L$, and repeats the process. Despite the reduction in computation brought about by the shrinking of the time horizon, however, for complex systems with large state spaces and multiple characteristic time scales, the resulting reduced-horizon nonlinear programs are still too expensive. To address this issue, several authors have proposed solving the MPC approximately as a parametric nonlinear equation. In this online optimization approach, one solves only a few  Newton iterations (and, sometimes, only one iteration) for the reduced-horizon problems before shifting the horizon by $L$ stages (see \cite{Ohtsuka1997Real, Diehl2002Real, Diehl2005Real, Diehl2005Nominal, Zavala2010Real, Dinh2012Adjoint, Wynn2014Convergence, Wang2010Fast, Zanelli2021Lyapunov}). Under some conditions on the data and the system, one can  show that, if started close enough to the original solution, the approach tracks the parametric optimal solution (see \cite{Zavala2010Real}). Detailed comparisons with this kind of iteration algorithms are discussed below.

In this work, we improve on existing results by proving that, when initialized close enough to the solution of \eqref{pro:1}, not only does the online MPC approach track the solution of~\eqref{pro:1}, as would be expected from the parametric optimization inspired analysis in \cite{Ohtsuka1997Real, Diehl2002Real, Zavala2010Real, Dinh2012Adjoint}, but \textit{it actually improves on the error for successive horizon shifts up to a minimum value that decays exponentially with the length of the receding horizon}. We call such a phenomenon \textit{superconvergence}, by analogy to a situation by the same name in finite elements where certain methods exhibit better convergence at particular points in the domain (the mesh nodes) compared to others \cite{Douglas1973Superconvergence}.

One of the key ingredients in proving this result is the recent sensitivity analysis on long-horizon NLDP established in \cite{Na2020Exponential}. In that result the authors showed that for NLDP \eqref{pro:1} that satisfies the second-order sufficient condition (SOSC) and controllability condition at a solution uniformly in $N$, perturbing $\bd_k$ results in a perturbation of the optimal solution that decays exponentially fast away from $k$. Their analysis is built on a convexification procedure adjusted from \cite{Verschueren2017Sparsity}. The exponential decay of the sensitivity is a key ingredient for showing fast convergence of several methods for solving dynamic programs. For instance, in the offline (where all of $\bd_k$ are available at one time, as occurs, for example in electricity planning) linear-quadratic case, where $g_k(\bx_k, \bu_k; \bd_k) = (\bx_k - \bd_k)^\top Q_k(\bx_k - \bd_k) + \bu_k^\top R_k\bu_k$ with $Q_k, R_k\succ 0$, $\bu_k$ is box constrained, and $f_k$ is affine, the authors in \cite{Xu2018Exponentially} showed that exponential decay of sensitivity helps to quantify the convergence properties of temporal decomposition. To that end, the time horizon is decomposed into multiple intervals, and two consecutive intervals are overlapped with $2b$ stages. The authors proved that exponential decay of sensitivity induces exponential convergence of the solution in $b$, and the solution is computed independently for each interval and concatenated over the entire horizon. Moreover, \cite{Na2020Overlapping} showed similar results for \eqref{pro:1}. For online solutions to \eqref{pro:1}  (where only $M+1$ components of $\bd_{k+j}$, $j=0,1,\ldots,M$ can be known at time $k$), the authors proved in \cite{Xu2019Exponentially} that, under similar setup as \cite{Xu2018Exponentially} but replacing bound constraints by path constraints, the exponential decay of sensitivity induces a lag-$L$ MPC strategy to converge to the solution of the full problem exponentially fast with respect to the receding-horizon length.

Our problem is more challenging because of nonlinearity and possible nonconvexity. The key to proving superconvergence of one-Newton-step online optimization approach is to recognize that, under positive definiteness of the reduced Hessian (as implied by SOSC) and controllability conditions, the one-step error recursion consists of two components: \textit{perturbation error}, which originates from the fact that some of the information used in the current Newton step has never been iterated on before (since new data is always coming into the optimization problem from the terminal horizon end), and \textit{algorithmic error}, which controls how fast the Newton step itself would converge on the full-horizon problem. The former is controlled by exponential decay of sensitivity considerations from \cite{Na2020Exponential}, whereas the latter automatically contributes to higher-order terms as the procedure iterates. For suitably large $L$ such that the effect of the perturbation has sufficiently diminished in the middle stages, our fast lag-$L$ online MPC strategy achieves the exponential convergence rate in $M$, but only in the middle stages of the problem (the vast majority of them). While the approach itself is unrelated to temporal decomposition ideas from \cite{Barrows2014Time, Xu2018Exponentially, Na2020Overlapping, Na2021Fast}, the buffer zone (i.e., lag $L$) we use to suppress the perturbation error is  motivated by that technique. It is also worth mentioning that recently a sequence of works studied solving a large-scale continuous-time linear-quadratic problems via different types of neural networks (NNs), such as gradient-based NN \cite{Zhang2009Performance}, zeroing NN \cite{Zhang2017Three}, recurrent NN \cite{Zhang2018Robustness, Zhang2019Power}, and convergent-differential NN \cite{Zhang2018New}. The idea is to design different neural dynamics to force certain error functions (usually the KKT residual) converge to zero. However, these works focus on general quadratic problems while we focus on nonlinear optimal control problems; in addition they require the differentiability in time of error functions, which is not applicable in our discrete case. Some papers also considered applying NN to solving MPC subproblems \cite{Li2015Missile, Li2016Trajectory}, which belongs to the exact MPC framework and is generally more inefficient than online MPC. Additionally the convergence analysis of iterates is missing in such NN-based MPC works.

\vskip2pt

\noindent{\bf Relationship to real-time iteration (RTI) algorithms}: Our paper is related to RTI algorithms, where a cheap approximate solution is computed at each sampling time. The strategy of solving a single Newton step in RTI schemes has been studied in \cite{Li1988Process, Oliveira1995extension, Ohtsuka1997Real, Diehl2002Real, Ohtsuka2004continuation/GMRES, Diehl2005Real, Diehl2005Nominal, Zavala2010Real, Dinh2012Adjoint, Wynn2014Convergence, Zanelli2017partially}. We point to \cite{Diehl2001Real, Gros2016linear, Nurkanovic2019Advanced} for a detailed survey of RTI and its relationship to (nonlinear) MPC. However, to our knowledge, the superconvergence result for online MPC/RTI-based MPC is first established in our paper for nonlinear problems. We explicitly show that the error of RTI decreases linearly in shift order, to a minimum value that decays exponentially in receding horizon length. Such a result is stronger than existing RTI results. We select some representative RTI works to briefly illustrate the differences. We refer the detailed comparisons in terms of problem setup, overhead, and theory to Remarks \ref{rem:setup}, \ref{rem:overhead}, \ref{rem:result}, respectively.
\begin{enumerate}[label=(\alph*),topsep=0pt]
\setlength\itemsep{0.0em}
\item \cite{Diehl2005Real} designed an algorithm for solving \eqref{pro:1} that performs one step Newton for each subproblem as well. Different from MPC, the authors only move the starting stage each time but fix the end stage at $N$, so their subproblems are embedded sequentially and the length of subproblems decreases from $N$ monotonically. Our MPC regime is more efficient since our subproblems have the same length $M$ which is most times much smaller than in \cite{Diehl2005Real}. We will also see that truncating both ends is intrinsically more difficult for the analysis than truncating the left end only (see Lemma \ref{lem:1} and Theorem \ref{thm:1}).

\item \cite{Diehl2005Nominal} designed a RTI-based MPC. The authors assume the objective is lower bounded by a quadratic function and hence each Newton step is well-posed. In our case, we assume only the full horizon SOSC, and allow the possibility of the Newton step for subproblems being ill-posed, which we correct by adjusting the terminal cost of the subproblems objective (see \eqref{pro:2}). As discussed in Remark \ref{rem:setup}, our time-varying problem is also more challenging to deal with than the time-invariant problem in \cite{Diehl2005Nominal}. Our theory is also carried out in a different way (see Remark \ref{rem:result}). A recent work \cite{Zanelli2021Lyapunov} extended \cite{Diehl2005Nominal} by~allowing inequality constraints and proposed a framework for RTI analysis by constructing a Lyapunov function for the combined system-optimizer dynamics. \cite{Zanelli2021Lyapunov} is not limited to Newton step but any step with $Q$-linear rate. However, it still considers time-invariant problems and studies RTI via parametric optimization as introduced bellow in (c), which is not applicable for our discrete-time, time-varying problems.

\item \cite{Diehl2002Real, Zavala2010Real, Dinh2012Adjoint} all studied RTI and its application on MPC via the lens of parametric optimization. They assume the (sub)problem is parameterized by a continuous parameter, denoted by $P(\xi)$, which is the time in control problems, and at each instant \cite{Diehl2002Real, Zavala2010Real} perform one step Newton while \cite{Dinh2012Adjoint} performs one step predictor-corrector method. \cite{Diehl2002Real} is an empirical study, while the analyses of \cite{Zavala2010Real, Dinh2012Adjoint} heavily rely on certain continuity of the problem on $\xi$, so that $P(\xi)$ and $P(\xi')$ together with their solutions are similar when $\|\xi - \xi'\|$ is small. However, this critical property does not hold in our case: even if two subproblems are successive, the distance between their two solutions (e.g. the state solution variation $\|\tx_{n_1:n_2} - \tx_{(n_1+1):(n_2+1)}\|$) need not be small.

\end{enumerate}

\noindent{\bf Structure of the paper:} In Section \ref{sec:2}, we introduce definitions and assumptions as preparation and then propose our MPC strategy in Section \ref{sec:3}. We analyze the convergence of the proposed strategy in Section \ref{sec:4}, and summarize numerical experiments and conclusions in Sections \ref{sec:5} and \ref{sec:6}, respectively.

\vskip5pt

\noindent{\bf Notations:} Throughout the paper, we use boldface fonts to denote column vectors and regular fonts to denote either scalars or matrices. Given a positive integer $k$, we let $[k] = \{0,1,\ldots,k\}$ be the index set from $0$ to $k$. For integers $k_1< k_2$, we abuse interval notations and let $[k_1, k_2]$, $[k_1, k_2)$, $(k_1, k_2]$, $(k_1, k_2)$ be the corresponding integer sets. For example, we have $(k_1, k_2) = \{k_1+1, \ldots, k_2-1\}$. We also use $\vee$ ($\wedge$) to represent $\max$ ($\min$) between two scalars. As usual, $\lfloor\cdot\rfloor$ and $\lceil\cdot\rceil$ represent floor and ceiling functions that map a real number to the greatest preceding or the least succeeding integer, respectively. For any two positive quantities $a, b$, we write $a\asymp b$ if they are of the same order, namely, $a\leq cb$ and $b\leq ca$ for some constant $c$. We let $(\ba_1; \ba_2;\ldots)$ denote a long column vector by stacking $\{\ba_i\}$ together. Similarly, $\diag(A_1, A_2, \ldots)$ is the block diagonal matrix with each block being specified by matrix $A_i$ successively. When dimensions of $\{A_i\}_{i=k_1}^{k_2}$ are compatible, we define $\prod_{i=k_1}^{k_2}A_i = A_{k_2}A_{k_2-1}\cdots A_{k_1}$ if $k_1\leq k_2$, and $I$ otherwise. We use $\|\cdot\|$ to denote the $\ell_2$-norm for vector and the operator norm for matrix. For any vector-valued function $f: \mR^{n_1}\rightarrow \mR^{n_2}$, we let $\nabla f\in \mR^{n_1\times n_2}$ be its gradient. When evaluating a function or matrix, we always use semicolon  to separate the given variables.

We also reserve the following notations for specific usage. $I$ is the identity matrix whose dimension is clear from the context; $\bx=\bx_{0:N} = (\bx_0;\ldots;\bx_N)$ (similar for $\bu$, $\bd$) is the ordered state (control, reference) variable; $\bz = (\bx_0;\bu_0;\ldots;\bx_{N-1};\bu_{N-1};\bx_N)$ is the ordered decision variable over the entire horizon; and $n_z = (N+1)n_x + Nn_u$ is its dimension. We may also write $\bz = (\bx, \bu)$ and $\bz_k = (\bx_k, \bu_k)$ ($\bz_N = \bx_N$) when specifying their components. Further, we let $N$, $M$, and $L$ be the length of the entire horizon, single receding horizon, and lag (i.e. the difference of starting stages of two successive subproblems), respectively.

\section{Preliminaries}\label{sec:2}

In this section, we summarize definitions that will be used frequently  later, and we introduce the second-order sufficient condition (SOSC) and the controllability condition.

Define the Lagrange function of Problem \eqref{pro:1} as
\begin{align}\label{equ:Lagrange:full}
\mL(\bz, \blambda; \bd) \coloneqq &\sum_{k=0}^{N-1}\mL_k(\bz_k, \blambda_{k-1:k}; \bd_k) + \mL_N(\bz_N, \blambda_{N-1})\nonumber\\
 & - \blambda_{-1}^\top\bd_{-1},
\end{align}
where $\mL_N(\bz_N, \blambda_{N-1}) = g_N(\bz_N) + \blambda_{N-1}^\top\bz_N$ and
\begin{align*}
\mL_k(\bz_k, \blambda_{k-1:k}; \bd_k) = g_k(\bz_k; \bd_k) + \blambda_{k-1}^\top\bx_k - \blambda_k^\top f_k(\bz_k; \bd_k)
\end{align*}
for $k\in[N-1]$. Here, $\blambda = \blambda_{-1:N-1} = (\blambda_{-1}; \blambda_0; \cdots; \blambda_{N-1})$ is the Lagrange multiplier vector with $\blambda_k$ being associated with the $k$-th constraint in \eqref{pro:1}. Using \eqref{equ:Lagrange:full}, we define Jacobian and Hessian matrices  of the Lagrangian next.

\begin{definition}\label{def:1}

Given an evaluation primal-dual point $(\bz, \blambda; \bd)$, we let $A_k = \nabla_{\bx_k}^\top f_k(\bz_k; \bd_k)$, $B_k = \nabla_{\bu_k}^\top f_k(\bz_k; \bd_k)$ for $k\in[N-1]$. Subsequently, the evaluation point of $A_k, B_k$ is suppressed for simplicity. We define the Hessian as
\begin{align*}
H_k(\bz_k, \blambda_k; \bd_k) = \begin{pmatrix}
Q_k & S_k^\top\\
S_k & R_k
\end{pmatrix} = \begin{pmatrix}
\nabla_{\bx_k}^2\mL_k & \nabla_{\bx_k\bu_k}^2\mL_k\\
\nabla_{\bu_k\bx_k}^2\mL_k & \nabla_{\bu_k}^2\mL_k
\end{pmatrix}
\end{align*}
and $H_N(\bz_N) = \nabla_{\bz_N}^2\mL_N(\bz_N, \blambda_{N-1}) = \nabla_{\bz_N}^2g_N(\bz_N)$. For ease of notation, we use $Q_N$ for $H_N$ interchangeably. Note that $\mL_k$ depends on $\blambda_{k-1}$ linearly; hence $H_k$ does not depend on $\blambda_{k-1}$. Further, we splice all blocks and define the corresponding matrices for the entire horizon:
\begin{align*}
&\small G(\bz_{0:N-1}; \bd_{0:N-1}) = \left(\begin{smallmatrix}
I\\
-A_0 & -B_0 & I\\
&&-A_1 & -B_1 & I\\
&&&&\ddots & \ddots\\
&&&&&-A_{N-1} & -B_{N-1} & I
\end{smallmatrix}\right),\\
&\small H(\bz, \blambda_{0:N-1}; \bd_{0:N-1}) = \diag(H_0, \ldots, H_{N-1}, H_N).
\end{align*}
Here, $G\in\mR^{(N+1)n_x\times n_z}$ and $H\in\mR^{n_z\times n_z}$. Given $G$, we let $Z(\bz_{0:N-1}; \bd_{0:N-1})\in\mR^{n_z\times Nn_u}$ be a matrix whose columns are orthonormal and span the null space of $G$. Then, the reduced Hessian matrix evaluated at $(\bz, \blambda_{0:N-1}, \bd_{0:N-1})$ is given by
\begin{align*}
H_{\rm Re}(\bz, \blambda_{0:N-1}; \bd_{0:N-1}) = Z^\top HZ.
\end{align*}
\end{definition}

We may drop the evaluation point $(\bz, \blambda_{0:N-1}; \bd_{0:N-1})$~hereinafter.  For example, the reference variable, $\bd$, is always the same and need not be updated. Note, however, that all the above definitions do not depend on the first Lagrange multiplier, $\blambda_{-1}$, and the initial condition, $\bd_{-1}$.

Since $G$ has full row rank at any evaluation point, the linear independence constraint qualification (LICQ) is guaranteed. To characterize the system controllability, we define the following controllability matrix.

\begin{definition}[Controllability matrix]\label{def:2}
Given an evaluation primal point $(\bz; \bd)$, for any stage $k\in[N-1]$ and evolution length $t\in[1, N-k]$, we define the controllability matrix as
\begin{align*}
\Xi_{k, t}&(\bz_{k:k+t-1}; \bd_{k:k+t-1}) \\
& = \bigl(\begin{smallmatrix}
B_{k+t-1} & A_{k+t-1}B_{k+t-2} & \cdots & \rbr{\prod_{l=1}^{t-1}A_{k+l}}B_k
\end{smallmatrix}\bigr)\in\mR^{n_x\times tn_u}.
\end{align*}
\end{definition}

Based on these two definitions, we are ready to introduce the (uniform) SOSC and the controllability condition in the next two assumptions.

\begin{assumption}[Uniform SOSC]\label{ass:unif:SOSC}
For a prespecified $\bd$ and a primal-dual solution $(\tz, \tlambda) = (\tz(\bd), \tlambda(\bd))$ of $\P_{0:N}(\bd)$ in \eqref{pro:1}, we assume 
\begin{align}\label{equ:SOSC}
H_{\rm Re}(\tz, \tlambda_{0:N-1}; \bd_{0:N-1}) \succeq \gamma_H I
\end{align}
for some positive constant $\gamma_H$ independent from $N$.
\end{assumption}

\begin{assumption}[Controllability condition]\label{ass:control}
There exist constants $\gamma_C, t>0$, independent from $N$, such that $\forall k\in[N-t]$
\begin{align}\label{equ:control}
\Xi_{k, t_k}\Xi_{k, t_k}^\top\succeq \gamma_C I, \text{\ \ for some\ } t_k\in[1, t],
\end{align}
where $\Xi_{k, t_k}$ is evaluated at $(\tz_{k:k+t_k-1}; \bd_{k:k+t_k-1})$. 
\end{assumption}

Although SOSC is sufficient but not necessary for $(\tz, \tlambda)$ being a local solution, it is widely assumed in sensitivity analysis since $(\tz, \tlambda)$ can be guaranteed to be a strict local solution of $\P_{0:N}(\bd)$. Thus, the directional directives of ($\tz(\bd)$, $\tlambda(\bd)$) with respect to $\bd$, given by the solutions of the quadratic approximation of \eqref{pro:1}, are well defined. See Theorem 5.53 in \cite{Bonnans2000Perturbation} for the theoretical underpinnings of this statement.

Assumption \ref{ass:control} is borrowed from \cite{Xu2018Exponentially, Xu2019Exponentially, Na2020Exponential} (with slight modifications). In principle, it guarantees that the linearized dynamic system, $\bx_{k+1} = A_k\bx_k + B_k\bu_k$, can be controlled in at most $t$ stages: for any initial state $\bx_k$, one can let the system transit to any terminal state $\bx_{k+t_k}$ by setting $\bu_{k:k+t_k-1}$ properly, with $t_k$ not exceeding $t$ for all $k$. In fact, there are several equivalent statements of Assumption \ref{ass:control}. For example, in both  \cite{Xu2018Exponentially} and \cite{Xu2019Exponentially} the authors considered a truncation of an infinite-horizon linear-quadratic problem. They assumed \eqref{equ:control} to hold for any $k\in[N-1]$ since they have access to $\{A_k, B_k\}_{k=N}^{\infty}$. Alternatively, in  \cite{Na2020Exponential} the authors assumed \eqref{equ:control} for $k\in[N-1]$ and $t_k\in[1, (N-k)\wedge t]$. Comparing with these related works, we simplify the condition further by considering $k\in[N-t]$ only, since for $k\in(N-t, N-1]$ the dynamics can  evolve only $t-1$ stages at most, so that the controllability on the tail is not essential. We mention that the results from \cite{Xu2018Exponentially, Xu2019Exponentially, Na2020Exponential} all hold if we use Assumption \ref{ass:control} in place of their controllability assumptions.

Under Assumptions \ref{ass:unif:SOSC} and \ref{ass:control} and extra boundedness conditions (that we introduce later), the authors  of \cite{Na2020Exponential} established the sensitivity result for \eqref{pro:1}. They showed that if $\bd(\epsilon) = \bd + \epsilon \be_i$ is the perturbation of $\bd$ at stage $i$, with $\be_i\in\mR^{Nn_d}$ being any unit vector with support within $(i \cdot n_d, (i+1)n_d]$, then $\|\tz_k\big(\bd(\epsilon)\big) - \tz_k(\bd)\|$ decays exponentially with $|k-i|$ for small enough $\epsilon$. By the uniformity in Assumptions \ref{ass:unif:SOSC} and \ref{ass:control}, the decay rate is a function of $\gamma_H$, $\gamma_C$, $t$ and is independent from $N$ as well. \textit{Our work is built on this result but used differently than in the original reference}. Specifically, we apply the sensitivity results to show the decay structure of the KKT inverse, and then focus on the local convergence of the MPC strategy around $(\tz, \tlambda)$. We first define a \textit{neighborhood} concept.

\begin{definition}\label{def:3}

Let $\epsilon$ be a strictly positive constant. For any $k\in[N-1]$, the $\epsilon$-cube of $(\tz_k, \tlambda_k)$ is $\{(\bz_k, \blambda_k): \|\bx_k - \tx_k\| \vee \|\bu_k - \tu_k\|\vee \| \blambda_k - \tlambda_k\|\leq \epsilon\}$, denoted by $\N_\epsilon(\tz_k, \tlambda_k)$. We define $\N_\epsilon(\tz_N)$ similarly. Given the point $(\tz_{k_1:k_2}, \tlambda_{k_1:k_2})$, we let the $\epsilon$-hypercube be $\N_\epsilon(\tz_{k_1:k_2}, \tlambda_{k_1:k_2}) \coloneqq\otimes_{j=k_1}^{k_2}\N_\epsilon(\tz_j, \tlambda_j)$, where $\otimes$ denotes the Cartesian product. When $k_2 = N$, we adjust it to $(\tz_{k_1:N}, \tlambda_{k_1:N-1})$. Moreover, for the full-horizon~point $(\tz, \tlambda_{0:N-1})$ and any integer $M$, we let \textit{$(M, \epsilon)$-neighborhood of $(\tz, \tlambda_{0:N-1})$} be
\begin{align*}
\N_{\epsilon, M}(\tz, &\tlambda_{0:N-1}) = \big\{(\bz, \blambda_{0:N-1}): \exists [k_1, k_2]\subseteq [0, N], \\
&k_2-k_1 = M, \text{\ s.t.\ } \bz = (\tz_{0:k_1-1}; \bz_{k_1:k_2}; \tz_{k_2+1:N}), \\
&\blambda = (\tlambda_{0:k_1-1}; \blambda_{k_1:k_2}; \tlambda_{k_2+1:N-1}), \text{\ and\ }\\
&(\bz_{k_1:k_2}, \blambda_{k_1:k_2})\in\N_\epsilon(\tz_{k_1:k_2}, \tlambda_{k_1:k_2})\big\}.
\end{align*}
\end{definition}

In other words, $\N_{\epsilon, M}(\tz, \tlambda_{0:N-1})$ consists of all the~vectors $(\bz, \blambda_{0:N-1})$ for which $M$ consecutive components are perturbed around those of  $(\tz, \tlambda_{0:N-1})$. Note that this set is a union of relative neighborhoods, but it is not a topological neighborhood. It does not contain an open set around $(\tz, \tlambda_{0:N-1})$, in contrast with $\N_{\epsilon}(\tz, \tlambda_{0:N-1})$, which does.

Recall that all quantities defined in Definitions \ref{def:1} and \ref{def:2} are not related to $\blambda_{-1}$, the multiplier associated to the initial condition. Thus, considering a neighborhood around it is not necessary. We sometimes drop  the subscript of $\blambda_{0:N-1}$ and abuse the notation $\blambda$, whose index range will be clear from the context. For simplicity, we also let $\N_\epsilon(\tz_{k_1:k_2}) = \{\bz_{k_1:k_2}: \|\bx_j - \tx_j\| \vee \|\bu_j - \tu_j\|\leq \epsilon, \forall j\in[k_1, k_2]\}$ and similarly have $\N_\epsilon(\tlambda_{k_1:k_2})$. Based on Definition \ref{def:3}, we have
\begin{multline}\label{equ:setseq}
(\tz, \tlambda) = \N_{\epsilon, 0}(\tz, \tlambda)\subseteq \cdots\subseteq\N_{\epsilon, M}(\tz, \tlambda)\\ \subseteq\cdots\subseteq\N_{\epsilon, N}(\tz, \tlambda) = \N_\epsilon(\tz, \tlambda).
\end{multline}

In order to end this section, we extend Assumptions \ref{ass:unif:SOSC} and~\ref{ass:control} to the $(M, \epsilon)$-neighborhood of $(\tz, \tlambda)$.

\begin{assumption}[Uniform positive definiteness of the reduced Hessian in the $(M, \epsilon)$-neighborhood]\label{ass:M:SOSC}

We say that the reduced Hessian at $(\tz, \tlambda)$ is uniformly positive definite in $(M, \epsilon)$-neighborhood if there exists a positive constant $\gamma_H$ independent from $N$ such that 
\begin{align*}
H_{\rm Re}(\bz, \blambda; \bd)\succeq \gamma_H I, \quad \forall \big(\bz, \blambda\big)\in \N_{\epsilon, M}(\tz, \tlambda).
\end{align*}

\end{assumption}

\begin{assumption}[Controllability condition in $\epsilon$-hypercube]\label{ass:M:control}

There exist constants $\gamma_C, t>0$, independent from $N$, such that, $\forall k\in[N-t]$, condition \eqref{equ:control} holds for any evaluation point $\bz_{k:k+t_k-1}\in \N_\epsilon(\tz_{k:k+t_k-1})$.
\end{assumption}

For fixed $N$, Assumptions \ref{ass:M:SOSC} and \ref{ass:M:control} are implied by Assumptions \ref{ass:unif:SOSC} and \ref{ass:control} if $\{f_k, g_k\}$ are twice continuously~differentiable, but they assume the respective parameters to be uniform in $N$. For conciseness, we do not change~the notation for $\gamma_H$, $\gamma_C$ between assumptions. We note that in Assumption \ref{ass:M:SOSC} we require the positive definiteness of the reduced Hessian only in $\N_{\epsilon, M}(\tz, \tlambda)$ instead of in $\N_{\epsilon, N}(\tz, \tlambda)$. Recall that $M$ is the length of a single receding horizon, which is much smaller than $N$ and does not grow with $N$. Essentially, Assumption \ref{ass:M:SOSC} means the positive definiteness of the reduced Hessian is robust to perturbations on every $M$ consecutive stages.

In the next section, we formalize the subproblem on each receding horizon and propose our MPC strategy. The usefulness of these assumptions will become apparent in Section \ref{sec:4}.

\section{Fast Lag-$L$ Online MPC Strategy}\label{sec:3}

In our setup, each receding horizon has length $M$ (except possibly the last one), and two successive horizons have lag $L$. The total number of horizons we have is given by
\begin{align*}
T = \lceil\frac{N-M}{L}\rceil + 1. 
\end{align*}
Note that the last horizon may have shorter length. Furthermore, the initial and terminal stages of each horizon are given by
\begin{align*}
n_1(i) = (i-1)L, \quad n_2(i) = \big(n_1(i) + M\big)\wedge N, \text{\ } \forall i\in[1,T].
\end{align*}
Therefore, the entire horizon is further decomposed into $[0, N]\subseteq \cup_{i = 1}^T[n_1(i), n_2(i)]$. For simplicity, we assume $M =SL$ for some integer $S>2$. We define two important quantities.

\begin{definition}\label{def:sTk}

(a) Given any stage $k\in[N]$, we let $T_k\in[1, T]$ be the index of the last subproblem that contains the stage $k$. By simple calculation, 
\begin{align}\label{equ:Tk}
T_k& = \max\{i: k\in[n_1(i), n_2(i)]\} \nonumber\\
= & \begin{cases}
i, & \text{\ if\ } k\in[n_1(i), n_1(i+1)\big) \text{\ for\ } i\in[1, T-1],\\
T, & \text{\ if\ } k\in[n_1(T), N].
\end{cases}
\end{align}
(b) Given subproblem $i\in[1, T]$ and stage $k\in[n_1(i), n_2(i)]$, we let $s(k, i)$ denote the number of times that stage $k$ was ``scanned,'' that is,  was part of an active receding-horizon \textit{before} subproblem $i$. In particular,
\begin{align*}
s(k, i) = \sum_{h=1}^{i-1}\pmb{1}_{k\in[n_1(h), n_2(h))}.
\end{align*}
For example, $s(k, 1) = 0$, $\forall k\in[0, M]$; $s(k, 2) = 1$ if $k\in[L, M)$ and $0$ if $k\in[M, M+L]$.

\end{definition}

Based on this definition, we can further define a critical quantity, $s_k \coloneqq s(k, T_k)$, which characterizes the total number of times that stage $k$ is scanned \textit{before its last occurrence} as we advance the process. One can immediately see that as $k$ moves from $0$ to $N$, $s_k$ increases from $0$ to $S-1$ and then decreases to $0$ again. At most, $s_k$ will increase or decrease by $1$ each time. We will assume $T\geq S-1$, which is the case of interest (since we are in the regime $N \gg M = SL$). Then for $k\in [0, N]$, we have $s_N = 0$; $s_k = s-1$ if $k\in[n_1(s), n_1(s+1)) \cup [n_1(T)+M-n_1(s+1), \rbr{n_1(T)+M - n_1(s)}\wedge N)$ for some $s\in[1, S-1]$; and $s_k = S-1$ if $k\in[n_1(S), n_1(T)+L\big)$. We plot $s_k$ in Figure~\ref{fig:sk} to make its definition more intuitive. Since the middle stages from $n_1(S)$ to $n_1(T)+L$ are scanned by $S$ subproblems, we expect that the increased number of Newton steps experienced by them will result in a smaller error relative to the exact MPC solution, compared with stages closer to the endpoints of the overall time horizon. We also mention that, for general $M$, $s_k$ for middle stages $k$ will vary between $\lfloor \frac{M}{L}\rfloor - 1$ and $\lfloor \frac{M}{L}\rfloor$.

\begin{figure}[!tp]
	\centering     
	\includegraphics[width=9cm,height=1.6cm]{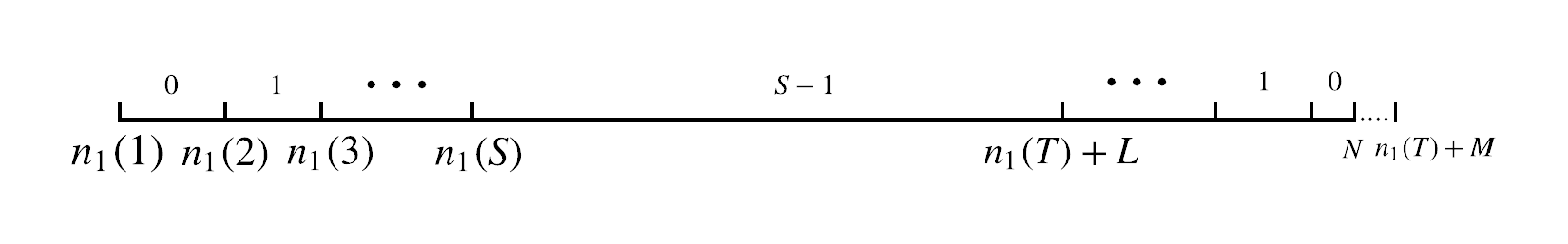}
	\caption{\textit{Variation of $s_k$ along whole horizon. The integer above the line indicates the value of $s_k$, and the integer under the line indicates the interval of receding horizons.}}\label{fig:sk}
\end{figure}

We build on this setup, and define subproblems within each receding horizon. Suppose a \textit{guess} point $(\bz^0, \blambda^0) \in \N_\epsilon(\tz, \tlambda)$ is available. The $i$-th subproblem on $[n_1(i), n_2(i)]$, denoted by $\P_i(\tbd_i)$, is defined as
{\small
\begin{align}\label{pro:2}
\min_{\substack{\bx_{n_1(i):n_2(i)}, \\ \bu_{n_1(i):n_2(i)-1}}}\  & \sum_{k=n_1(i)}^{n_2(i)-1} g_k(\bz_k; \bd_k) + g_{n_2(i)}(\bx_{n_2(i)}, \bu_{n_2(i)}^0; \bd_{n_2(i)}) \nonumber\\
&\quad - (\blambda_{n_2(i)}^0)^\top f_{n_2(i)}(\bx_{n_2(i)}, \bu_{n_2(i)}^0; \bd_{n_2(i)}) \nonumber\\
&\quad + \frac{\mu\|\bx_{n_2(i)} - \bx^0_{n_2(i)}\|^2}{2}, \nonumber\\
\text{s.t.}\text{\ \ } & \bx_{k+1} = f_k(\bz_k; \bd_k), \text{\ } k \in[n_1(i), n_2(i)-1],\\
& \bx_{n_1(i)} = \bbx_{n_1(i)}, \nonumber
\end{align}
}where $\tbd_i = (\bbx_{n_1(i)}; \bd_{n_1(i):n_2(i)}; \bz_{n_2(i)}^0; \blambda_{n_2(i)}^0)$ is the \textit{extended} reference variable. We deliberately use different notations for the initial state since in our procedure $\bbx_{n_1(i)}$ is inherited from the previous subproblem. Compared with classical MPC \cite{Diehl2005Nominal} and parametric optimization \cite{Diehl2002Real, Zavala2010Real, Dinh2012Adjoint}, in Problem \eqref{pro:2} we modify the terminal objective by including the corresponding dual function and an extra quadratic term. We note that the scale parameter $\mu$ does not depend on a specific~$i$. Later, we will set it globally such that all subproblems can be uniformly analyzed and enjoy similar properties. As usual, when $i = T$ (i.e., $n_2(i)  = N$), all modifications should be reinstated, and we simply use $g_N(\bx_N)$ as the terminal objective.

\begin{remark}
The subproblem \eqref{pro:2} can be set up differently under stronger assumptions. For example, when $g_k$, $f_k$ are separable with respect to the state and control, one need not specify $\bu_{n_2(i)}^0$ in the terminal objective; when $g_{n_2(i)}$ is strongly convex, one can let $\blambda_{n_2(i)}^0 = 0$ and $\mu = 0$ (see \cite{Wang2010Fast}); when $B_k = \nabla_{\bu_k}^\top f_k$ (see Definition \ref{def:1}) has full row rank in the neighborhood, one can add the extra condition $\bx_{n_2(i)} = \bx_{n_2(i)}^0$ to make $\bx_{n_2(i)}$ be fixed without violating LICQ. All these modifications make \eqref{pro:2} well posed in specific cases. In our general case, $\blambda_{n_2(i)}^0$ and $\mu $ are needed to ensure that the positive definiteness of the reduced Hessian for \eqref{pro:1} induces the positive definiteness of the reduced Hessian for \eqref{pro:2}.

\end{remark}

Analogous to Definition \ref{def:1}, we define the Lagrangian and its Jacobian and Hessian matrices for subproblem $i$ with $i\in[1, T]$.

\begin{definition}\label{def:4}

We let $\tbx_i = \bx_{n_1(i):n_2(i)}$, $\tbu_i = \bu_{n_1(i):n_2(i)-1}$, $\tblambda_i = \blambda_{n_1(i)-1:n_2(i)-1}$ be the aggregated primal and dual variables; $\tbz_i = (\tbx_i, \tbu_i) = (\bx_{n_1(i)};\bu_{n_1(i)};\ldots;\bx_{n_2(i)})$ be the ordered primal variables. The Lagrange function of the $i$-th subproblem is given by
{\small
\begin{align*}
\mL^i(\tbz_i, \tblambda_i;\tbd_i) &=  \sum_{k=n_1(i)}^{n_2(i)-1}\mL_k(\bz_k, \blambda_{k-1:k};\bd_k) \\
&\quad + \mL_{n_2(i)}(\bx_{n_2(i)},\bu_{n_2(i)}^0, \blambda_{n_2(i)-1}, \blambda_{n_2(i)}^0;\bd_{n_2(i)})\\
& \quad  + \frac{\mu}{2}\|\bx_{n_2(i)} - \bx_{n_2(i)}^0\|^2 - \blambda_{n_1(i)-1}^\top\bbx_{n_1(i)}.
\end{align*}
}Further, given an evaluation point $(\tbz_i, \tblambda_i; \tbd_i)$, we define the Jacobian and Hessian matrices of the Lagrangian of the $i$-th subproblem as
{\small
\begin{align*}
\small G^i(\bz_{n_1(i):n_2(i)-1}&; \bd_{n_1(i);n_2(i)-1}) \\
&= \left(\begin{smallmatrix}
I\\
-A_{n_1(i)} & -B_{n_1(i)} & I\\
&&\ddots & \ddots\\
&&&-A_{n_2(i)-1} & -B_{n_2(i)-1} & I
\end{smallmatrix}\right),\\
\small H^i(\tbz_i, \tblambda_i; \tbd_i) = \small &\diag(H_{n_1(i)}, \ldots, H_{n_2(i)-1}, H^i_{n_2(i)}),
\end{align*}
}where $\{A_k, B_k, H_k\}_{k = n_1(i)}^{n_2(i)-1}$ are defined in Definition \ref{def:1}. Specifically, $A_k, B_k$ are evaluated at $(\bz_k, \bd_k)$, and $H_k$ is evaluated at $(\bz_k, \blambda_k, \bd_k)$. Here, the last block of $H^i$ is
\begin{multline*}
\small H^i_{n_2(i)}(\bx_{n_2(i)}; \bu_{n_2(i)}^0, \blambda_{n_2(i)}^0, \bd_{n_2(i)}) \\= Q_{n_2(i)}(\bx_{n_2(i)}, \bu_{n_2(i)}^0, \blambda_{n_2(i)}^0; \bd_{n_2(i)}) + \mu  I.
\end{multline*}
Furthermore, let $Z^i$ denote a matrix whose columns span the null space of $G^i$ with orthonormal columns. The reduced Hessian is $ H^i_{\rm Re}(\tbz_i, \tblambda_i; \tbd_i) = (Z^i)^\top H^iZ^i$. For all notations such as $G^i, Z^i, H^i$, $H^i_{\rm Re}$, we may replace superscript $i$ by $n_1(i):n_2(i)$ to explicitly indicate the horizon location. 
\end{definition}

We now set the stage for proposing the fast lag-$L$ online MPC strategy. Starting from the first subproblem $\P_1(\tbd_1)$ with $\tbd_1 = (\bbx_0; \bd_{0:M}; \bz_M^0; \blambda_M^0)$, we initialize variables by letting $\tbz_1^0 = (\bbx_0; \bu_0^0;\bz_1^0; \ldots; \bz_{M-1}^0; \bx_M^0)$ and $\tblambda_1^0 = \blambda_{-1:M-1}^0$. Given $(\tbz_1^0, \tblambda_1^0, \tbd_1)$, we calculate the Newton step by solving the following linear system,
\begin{align}\label{equ:Newton}
\left(\begin{smallmatrix}
H^1 & (G^1)^\top\\
G^1 & 0
\end{smallmatrix}\right)\left(\begin{smallmatrix}
\Delta\tbz_1\\
\Delta\tblambda_1
\end{smallmatrix}\right) = -\left(\begin{smallmatrix}
\nabla_{\tbz_1}\mL^1\\
\nabla_{\tblambda_1}\mL^1\\
\end{smallmatrix}\right),
\end{align}
and then update the iterate as
\begin{align}\label{equ:Update}
\left(\begin{smallmatrix}
\tbz_1^1\\
\tblambda_1^1
\end{smallmatrix}\right) \coloneqq \left(\begin{smallmatrix}
\tbz_1^0\\
\tblambda_1^0
\end{smallmatrix}\right) + \left(\begin{smallmatrix}
\Delta\tbz_1\\
\Delta\tblambda_1
\end{smallmatrix}\right). 
\end{align}
Then, we move to subproblem $\P_2(\tbd_2)$. For general $\P_i(\tbd_i)$, we first let $\tbd_i = (\bx_{n_1(i), i-1}^1; \bd_{n_1(i):n_2(i)}; \bz_{n_2(i)}^0; \blambda_{n_2(i)}^0)$, where $\bx_{k, i}^\Id$ (similar for $\bu$, $\bz$, $\blambda$) denotes the state iterate of stage $k$ in subproblem $i$, with $\Id = 1$ indicating the output and $\Id = 0$ indicating the input. Then, $\tbz_i^0 = (\bz_{n_1(i), i}^0;\ldots; \bz_{n_2(i)-1, i}^0;\bx_{n_2(i),i}^0)$ is initialized as follows:
\begin{align}\label{equ:trans1}
\bz_{k, i}^0 = \begin{cases}
\bz_{k, i-1}^1 & \text{\ for\ } k\in[n_1(i), n_2(i) - 2L],\\
\bz_k^0 & \text{\ for\ } k\in(n_2(i) - 2L, n_2(i)-1],
\end{cases}
\end{align}
and $\bx_{n_2(i),i}^0 = \bx_{n_2(i)}^0$. Similarly, $\tblambda_i^0 = \blambda_{n_1(i)-1:n_2(i)-1, i}^0$ is initialized as
\begin{align}\label{equ:trans2}
\blambda_{k, i}^0 = \begin{cases}
\blambda_{k, i-1}^1 & \text{\ for\ } k\in[n_1(i)-1, n_2(i) - 2L],\\
\blambda_k^0 & \text{\ for\ } k\in(n_2(i) - 2L, n_2(i)-1].
\end{cases}
\end{align}
Using the tuple $(\tbz_i^0, \tblambda_i^0, \tbd_i)$, we calculate the Newton's direction $(\Delta\tbz_i, \Delta\tblambda_i)$, as shown in \eqref{equ:Newton}, and update the iterate to get $(\tbz_i^1, \tblambda_i^1)$, as shown in \eqref{equ:Update}. The outputs are given by (see Definition \ref{def:sTk} for $T_k$)
\begin{align}\label{equ:output}
\hbz_k = \bz_{k, T_k}^0 \text{\ \ and\ \ } \hblambda_k = \blambda_{k, T_k}^0, \text{\ \ } \quad \forall k.
\end{align}
Note that based on ``output-input" transition in \eqref{equ:trans1}, the initial constraint of each subproblem is always attained.

\begin{algorithm}[!tp]
\caption{Fast Lag-$L$ Online MPC Strategy}
\begin{algorithmic}[1]\label{alg:1}
\STATE Input: whole-horizon length $N$, receding-horizon length $M$, lag $L$, objective functions $\{g_k\}_{k=0}^N$, dynamic functions $\{f_k\}_{k=0}^{N-1}$, reference $\{\bd_k\}_{k=-1}^{N-1}$ ($\bbx_0 = \bd_{-1}$), initial point $(\bz^0, \blambda^0)$ with $\bx_0^0 = \bbx_0$;
\STATE Let $T = \lceil \frac{N-M}{L}\rceil + 1$;
\STATE Set $\big\{\big(\bz_{k, 0}^{1}, \blambda_{k, 0}^{1}\big) = \big(\bz_k^0, \blambda_k^0\big)\big\}_{k\in[0, M-2L]}$, $\blambda_{-1,0}^{1}= \blambda_{-1}^0$;

\FOR{$i = 1:T$}
		\STATE Let $n_1 = (i-1)L$, $n_2 = (n_1+M)\wedge N$, $n_m = n_1 + M - 2L$;
		\STATE Let $\tbd_i = (\bx_{n_1, i-1}^1; \bd_{n_1:n_2}; \bz_{n_2}^0; \blambda_{n_2}^0)$;
		\STATE Set $\big\{\big(\bz_{k, i}^{0}, \blambda_{k, i}^{0}\big) = \big(\bz_{k, i-1}^{1}, \blambda_{k, i-1}^{1}\big)\big\}_{k\in[n_1:n_m]}$ and $\big\{\big(\bz_{k, i}^{0}, \blambda_{k, i}^{0}\big) = \big(\bz_{k}^{0}, \blambda_{k}^{0}\big)\big\}_{k\in(n_m:n_2)}$;
		\STATE Set $\blambda_{n_1-1, i}^{0} = \blambda_{n_1-1, i-1}^{1}$, $\bx_{n_2,i}^0 = \bx_{n_2}^0$;
		\STATE Order the iterates in  lines 7--8 properly, and define $\tbz_i^0 = (\bz_{n_1:n_2-1, i}^0; \bx_{n_2, i}^0)$ and $\tblambda_i^0 = \blambda_{n_1-1:n_2-1,i}^0$;
		\STATE Conduct \eqref{equ:Newton}-\eqref{equ:Update} for $i$th subproblem using $(\tbz_i^0, \tblambda_i^0, \tbd_i)$, and derive $(\tbz_i^1, \tblambda_i^1)$;
		\STATE  Denote $\tbz_i^1 = (\bz_{n_1:n_2-1, i}^1; \bx_{n_2, i}^1)$, $\tblambda_i^1 = \blambda_{n_1-1:n_2-1,i}^1$;
		\ENDFOR
		\STATE Output: $\{\hat{\bx}_k = \bx_{k, T_k}^{0}\}_{k = 0:N}$,  $\{\hat{\bu}_k = \bu_{k, T_k}^{0}\}_{k = 0:N-1}$, $\{\hat{\blambda}_k = \blambda_{k, T_k}^{0}\}_{k = 0:N-1}$, and $\hblambda_{-1} = \blambda_{-1}^0$, where $T_k$ is defined in \eqref{equ:Tk}.
	\end{algorithmic}
\end{algorithm}
The complete ``one Newton step per receding horizon" algorithm is presented in Algorithm \ref{alg:1}. We next make a few remarks about Algorithm \ref{alg:1}.

\begin{remark}\label{rem:KKT:inv}

To make Algorithm \ref{alg:1} well posed, we need the KKT matrix in \eqref{equ:Newton}, evaluated at $(\tbz_i^0, \tblambda_i^0; \tbd_i)$, to be nonsingular for any $i\in[1, T]$. In fact, this is guaranteed by LICQ $+$ positive definiteness of the reduced Hessian; see Lemma 16.1 in \cite{Nocedal2006Numerical}. By the structure of $G^i$ in Definition~\ref{def:4}, we know that LICQ always holds. To show  (uniform) positive definiteness of the reduced Hessian requires two steps. (i) We need show that the algorithm is stable in the sense that \textit{all iterates in all subproblems stay in the neighborhood $\N_{\epsilon}(\tbz^\star, \tblambda^\star)$}, where $(\tbz^\star, \tblambda^\star)$ denotes the truncated true solution. This suggests that the initial point will not jump out of the neighborhood, where we have no conclusions on the behavior of the Lagrangian in the reduced space. (ii) We need to show that positive definiteness of reduced Hessian for a subproblem can be inherited from the full problem: if the reduced Hessian for the full problem is positive definite at one point, then the reduced Hessian at the corresponding truncated (receding horizon problem) point is also positive definite, provided $\mu$ is chosen properly (and uniformly). Details are provided in the next section.
	
\end{remark}

\begin{remark}
In \eqref{equ:Update}, we choose a step size of one in all iterations since we are interested in the local behavior of online MPC. As is common in such setups, we assume $(\bz^0, \blambda^0)$ is sufficiently close to  $(\tz, \tlambda)$\footnote{For example, in convex optimization it is standard to assume that the radius of this neighborhood is bounded by the reciprocal of the condition number; see (9.33) in \cite{Boyd2004Convex}.} such that Newton's method already arrives at the second (quadratically convergent) phase. A similar setup is studied in Algorithm 18.1 in \cite{Nocedal2006Numerical} and \cite{Diehl2005Real, Diehl2005Nominal, Zavala2010Real, Gros2016linear}. Some extensions on \eqref{equ:Update}, such as incorporating it with an Armijo or Wolfe backtracking line search, are worth studying as well.	
\end{remark}

\begin{remark}\label{rem:2}

We adopt two critical techniques in Algorithm \ref{alg:1}.
\begin{enumerate}[label=(\alph*),topsep=1pt]
\setlength\itemsep{-0.0em}
\item ``Discard the tail": for each subproblem $i$, the initial point within stages $[n_2(i-1), n_2(i)]$ are set to $(\bz^0, \blambda^0)$. But one big difference from typical MPC strategies is that we also let $(\bz^0, \blambda^0)$ initialize stages within $(n_2(i-1)-L, n_2(i-1)]$, the last $L$ stages of $(i-1)$-th subproblem. Thus, we discard iterates $\bz_{n_2(i-1)-L+1:n_2(i-1), i-1}^1$ (same for $\blambda$) when moving to the $i$-th subproblem.
\item ``Stop early": for each stage $k$, our output $(\hbz_k, \hblambda_k)$ is set to be the initial point of the last subproblem that is going to update the stage $k$, instead of the updated point. More specifically, we use $\bz_{k, T_k}^0$ (same for $\blambda$), instead of $\bz_{k, T_k}^1$, as the final output.
\end{enumerate}
Both these algorithmic strategies are adopted for similar reasons. We aim to show that middle stages in the horizon see the benefit of multiple Newton steps (since they are scanned multiple times), but the terminal stages, which are iterated by only one Newton step, may have a precision no better than without iteration. Using these iterates may subsequently affect the precision of the entire process. To improve accuracy, we  discard those iterates that are updated just once on the tail. The  usefulness of this strategy will be seen in the analysis.
\end{remark}

\begin{remark}\label{rem:setup}
(Algorithm and subproblem formulation comparisons  with RTI schemes). Both \cite{Diehl2005Nominal} and \cite{Wang2010Fast} can be viewed as lag-$1$ online MPC, albeit \cite{Diehl2005Nominal} used a perturbed Hessian in Newton equation and \cite{Wang2010Fast} performed multiple Newton steps and adopted sparse factorization technique when solving Newton equation. Our algorithm is similar to such online approaches except that we allow the flexibility to choose larger lags and adopt an ad-hoc strategy to update the boundary iterates of each subproblem (see Remark \ref{rem:2}). \cite{Diehl2002Real, Zavala2010Real, Dinh2012Adjoint} studied solving parametric nonlinear optimization in real time. Relative to the MPC framework, \cite{Diehl2002Real, Zavala2010Real} are similar to \cite{Diehl2005Nominal} since they perform a single Newton step as well, while \cite{Dinh2012Adjoint} performs a single predictor-corrector step that is particularly designed for convex objective with nonlinear constraints. \cite{Diehl2005Real} is not precisely comparable with the aforementioned MPC schemes since the subproblems horizons are embedded sequentially.
	
Our subproblem formulation makes our analysis different from the above work. \cite{Diehl2005Nominal, Wang2010Fast, Zanelli2021Lyapunov} studied time-invariant optimal control and subproblems at different time instants only differ by the initial state. This can be viewed as parametric problems with continuous parameter being the given initial state, studied in \cite{Diehl2002Real, Zavala2010Real, Dinh2012Adjoint}. This setup is not applicable here because our two subproblems may have different solutions even if they are successive with the same initial state. Thus, our subproblems are not parameterized by any continuous parameters. The formulation \eqref{pro:2} plays a key role in analysis and the regularization term appears to be only considered here for the purpose of convergence.

\end{remark}

\begin{remark}\label{rem:overhead}

We discuss the overhead of the proposed algorithm. We have $T$ subproblems in total, and for each subproblem the computational complexity is dominated by solving the Newton equation in \eqref{equ:Newton}. In particular, the KKT matrix is a square matrix with dimension $(2(M+1)n_x + Mn_u)$. For sparse $LDL^\top$ factorization, \cite{Rao1998Application,Wang2010Fast} showed that solving \eqref{equ:Newton} only costs $O(M(n_x + n_u)^3)$ flops, which is linear in $M$ (see Section C in \cite{Wang2010Fast}). Thus, the total computational complexity is $O(TM(n_x+n_u)^3)\asymp O(NM(n_x+n_u)^3/L)$. Note that such complexity is also applicable for other lag-$1$ online MPC schemes \cite{Diehl2005Nominal} except that our result depends on the lag explicitly (\cite{Wang2010Fast} has larger complexity since it performs multiple Newton steps for each subproblem). It is worth pointing out that the lag $L$ here is a trade-off between computational complexity and precision. For large $L$, we have fewer subproblems and hence less computation, while the total number of ``scans" of each stage, i.e. $s_k\in[0, M/L]$, will also decrease and thus the iterates are less precise. Also note that $O(NM(n_x+n_u)^3/L)$ is the same as performing $M/L$ Newton iterations for the full problem directly and $N/L$ iterations of the online algorithm. Since $N/L$ is very large in our case this points out to the advantage of online MPC when time to solution is preferred. An important point to note, since we advocate tuning $M$ and $L$, is that the cost does increase linearly with $M$, but we prove that the accuracy increases exponentially, so the trade-off of increasing $M$ for accuracy may be worth making. 
	
In terms of the memory cost, our algorithm only needs to save all matrices and vectors associated to each subproblem at each time instant, which is dominated by the storage of the KKT matrix. Due to its sparsity, it is easy to see the memory cost is $O(M(n_x+n_u)^2)$, which is also linear in $M$. Such memory cost is standard among online MPC regimes. 


\end{remark}

\section{Local Convergence Analysis}\label{sec:4}

In this section, we carry out a theoretical analysis of Algorithm \ref{alg:1}. We will show that the proposed algorithm converges exponentially with respect to the receding-horizon length $M$ on a per-stage basis, that is, with respect to the stage index. In particular, we will show for some constant $\rho\in(0, 1)$ in \eqref{equ:super} that $\|\hbz_k - \tz_k\| \lesssim \rho^M$ for any $k\in[M-L, TL)$  (and a similar result for dual variables). Throughout the analysis, we assume that $(\bz^0, \blambda^0)\in\N_\epsilon(\tz, \tlambda)$ is a fixed, known point and is used when specifying $\tbd_i$, shown in \eqref{pro:2}. 

A sketch of our technical analysis is as follow.
\begin{enumerate}[label=(\alph*),topsep=1pt]
\setlength\itemsep{-0.0em}
\item We first show that if $(\tbz_i^0, \tblambda_i^0) \in \N_\epsilon(\tbz_i^\star, \tblambda_i^\star)$, then the reduced Hessian $H^i_{\rm Re}(\tbz_i^0, \tblambda_i^0; \tbd_i)$ is uniformly positive definite provided $\mu$ is set large enough.
\item We delve into Newton's iteration and derive the one-step error recursion. Based on the recursion, we can further study the stability of the iterates tracking to the solution $(\tz, \tlambda)$.
\item We derive the local convergence rate for all stages based on the preceding two steps.
\end{enumerate}
To clarify, $(\tbz_i^\star, \tblambda_i^\star)$ denotes the solution of \eqref{pro:1} truncated on subproblem $i$, that is, $\tbz_i^\star = (\tz_{n_1(i):n_2(i)-1}; \tx_{n_2(i)})$ and $\tblambda_i^\star = \tlambda_{n_1(i)-1:n_2(i)-1}$. Similarly, $\tbd_i^\star = \big(\tx_{n_i}; \bd_{n_i:m_i}; \tz_{m_i}; \tlambda_{m_i}\big)$ denotes the underlying true data. We target the preceding three steps in the following three subsections. We now state boundedness and regularity assumptions on Problem \eqref{pro:1}.

\begin{assumption}[Upper boundedness condition]\label{ass:M:Ubound}
There exists a constant $\Upsilon$ such that $\|H_N(\bx_N)\|\leq \Upsilon$ for $\bx_N \in \N_\epsilon(\tx_N)$ and, for any $ k\in[N-1]$ and $(\bz_k, \blambda_k) \in \N_\epsilon(\tz_k, \tlambda_k)$,
\begin{align}\label{equ:bound}
\small \|A_k(\bz_k; \bd_k)\| \vee \|B_k(\bz_k; \bd_k)\| \vee \|H_k(\bz_k, \blambda_k; \bd_k)\|\leq \Upsilon.
\end{align}

\end{assumption}

\begin{assumption}[Lipschitz continuity]\label{ass:M:Lip:cond}

There exists a constant $\Upsilon_L$ such that for any $k$ and any two points $(\bz_k, \blambda_k), (\bz_k', \blambda_k')\in \N_\epsilon(\tz_k, \tlambda_k)$,
\begin{align*}
&\small \|H_k(\bz_k, \blambda_k; \bd_k) - H_k(\bz_k', \blambda_k'; \bd_k)\|\leq \Upsilon_L\|(\bz_k - \bz_k'; \blambda_k - \blambda_k')\|,\\
&\small \|\nabla_{\bz_k}f_k(\bz_k; \bd_k) - \nabla_{\bz_k}f_k(\bz_k'; \bd_k)\|\leq \Upsilon_L\|\bz_k - \bz_k'\|.
\end{align*}	
\end{assumption}

Assumption \ref{ass:M:Ubound} is also assumed in \cite{Xu2018Exponentially, Xu2019Exponentially, Na2020Exponential}. It restricts the problem data to having uniform boundedness property. Assumption \ref{ass:M:Lip:cond} is also standard in analyzing Newton's method. See, for example, equation (9.31) in \cite{Boyd2004Convex}. It always holds if $\{f_k, g_k\}$ are thrice continuously differentiable. Similar to Assumptions \ref{ass:M:SOSC} and \ref{ass:M:control}, Assumptions \ref{ass:M:Ubound} and \ref{ass:M:Lip:cond} are made uniformly over the entire horizon and are required only in a neighborhood of the solution of \eqref{pro:1}.

\subsection{Positive definiteness of the reduced Hessian of subproblems}\label{sec:4.1}

Let us consider the $i$-th subproblem and suppose $(\tbz_i^0, \tblambda_i^0) \in \N_\epsilon(\tbz_i^\star, \tblambda_i^\star)$. We note that $G^i$, $H^i$ in Definition \ref{def:4} are submatrices of $G$, $H$ in Definition \ref{def:1} (except the last block matrix of $H^i$). However, $Z^i$, the matrix that spans the null space of $G^i$, is not a submatrix of $Z$, thus complicating the analysis.

The following lemma shows truncating the horizon from the left maintains the positive definiteness of the reduced Hessian. 

\begin{lemma}\label{lem:1}

For any point $(\bz, \blambda)$, if $H_{\rm Re}(\bz, \blambda_{0:N-1};\bd_{0:N-1})\\\succeq \gamma_H I$, then $\forall k\in[N-1]$,
\begin{align*}
H^{k:N}_{\rm Re}(\bz_{k:N}, \blambda_{k:N-1}; \bd_{k:N-1})\succeq \gamma_H I,
\end{align*}
where $H^{k:N}_{\rm Re}$ denotes the reduced Hessian of the subproblem starting from the stage $k$ to the end.
\end{lemma}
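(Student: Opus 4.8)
The plan is to exploit a zero-extension embedding: every feasible direction of the left-truncated problem on $[k,N]$ extends, by padding with zeros on stages $0,\dots,k-1$, to a feasible direction of the full-horizon problem, and this extension is an isometry that preserves the Hessian quadratic form. Since a left truncation only deletes the leading diagonal blocks of $H$ and the leading staircase rows of $G$ (the terminal stage $N$ and its cost $H_N$ are untouched), this embedding goes through cleanly.

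First I would rewrite both SOSC statements variationally. Because $Z$ (resp.\ $Z^{k:N}$) has orthonormal columns spanning the null space of $G$ (resp.\ of $G^{k:N}$, which has full row rank by its staircase structure, so $ReH^{k:N}$ is well defined), the hypothesis $ReH = Z^THZ\succeq\gamma_H I$ is equivalent to $\bv^T H\bv\ge\gamma_H\|\bv\|^2$ for all $\bv$ in the null space of $G$, and the conclusion $ReH^{k:N}\succeq\gamma_H I$ is equivalent to $\bw^T H^{k:N}\bw\ge\gamma_H\|\bw\|^2$ for all $\bw$ in the null space of $G^{k:N}$.

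Next, take an arbitrary $\bw=(\bw_k;\dots;\bw_{N-1};\bw_N)$ in the null space of $G^{k:N}$, with $\bw_j=(\delta\bx_j;\delta\bu_j)$ for $j\in[k,N-1]$ and $\bw_N=\delta\bx_N$; the first block row of $G^{k:N}$ forces $\delta\bx_k=\0$, and the remaining rows give $\delta\bx_{j+1}=A_j\delta\bx_j+B_j\delta\bu_j$ for $j\in[k,N-1]$. Define $\bv\in\mR^{n_z}$ by $\bv_j=\0$ for $j\in[0,k-1]$ and $\bv_j=\bw_j$ for $j\in[k,N]$, and verify $\bv$ lies in the null space of $G$: the row enforcing $\delta\bx_0=\0$ holds; the dynamics rows for $j\in[0,k-2]$ hold trivially since all entries are $\0$; the junction row for $j=k-1$ reads $\delta\bx_k=A_{k-1}\delta\bx_{k-1}+B_{k-1}\delta\bu_{k-1}$, which holds because both sides are $\0$ — the left side by the initial-condition row of $G^{k:N}$, the right side since $\bw_{k-1}=\0$ — and the rows for $j\in[k,N-1]$ are exactly the dynamics already satisfied by $\bw$. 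Since $H=\diag(H_0,\dots,H_{N-1},H_N)$ and $H^{k:N}=\diag(H_k,\dots,H_{N-1},H_N)$ are block diagonal and $\bv$ matches $\bw$ on the trailing blocks and vanishes elsewhere, $\bv^T H\bv=\bw^T H^{k:N}\bw$ and $\|\bv\|=\|\bw\|$; hence $\bw^T H^{k:N}\bw=\bv^T H\bv\ge\gamma_H\|\bv\|^2=\gamma_H\|\bw\|^2$, which is the claim.

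I do not expect a genuine obstacle: the argument is essentially the observation that left-truncation embeds isometrically into the full problem at the level of feasible directions. The only point needing a moment's care is the junction constraint between stages $k-1$ and $k$, where dynamics-feasibility of the zero extension is rescued precisely by the subproblem's own initial-condition constraint $\delta\bx_k=\0$. It is worth noting that this simple argument does not cover right-truncation, nor the modified receding-horizon subproblem of Definition~\ref{def:4} with its $\mu$-term; that is exactly where the scaling parameter $\mu$ and a more delicate null-space comparison must enter in subsequent lemmas.
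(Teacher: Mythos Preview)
Your proof is correct and follows essentially the same approach as the paper: zero-extend a null-space vector of $G^{k:N}$ to a null-space vector of $G$, then use the block-diagonal structure of $H$ and the isometry of the embedding to transfer the $\gamma_H$ lower bound. Your treatment is slightly more explicit about the variational reformulation and the junction row at stage $k-1$, but the argument is the same.
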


\begin{proof}
See Appendix \ref{pf:lem:1}.
\end{proof}

From Lemma \ref{lem:1}, we see that truncating the full-horizon vector backward in time preserves the positive definiteness of the reduced Hessian. However, the same property does not hold  forward in time in general. Therefore, we rely on the extra controllability condition and  make use of the quadratic penalty formulation in \eqref{pro:2} to make sure that a forward truncation preserves such property as well. We first present an immediate deduction based on Lemma \ref{lem:1}.

\begin{corollary}\label{cor:1}
Suppose Assumption \ref{ass:M:SOSC} holds. For the $T$-th (i.e., last) subproblem, if $(\tbz_T^0, \tblambda_T^0) \in \N_\epsilon(\tbz_T^\star, \tblambda_T^\star)$, then
\begin{align*}
H^T_{\rm Re}(\tbz_T^0, \tblambda_T^0; \tbd_T) \succeq \gamma_H I.
\end{align*}
\end{corollary}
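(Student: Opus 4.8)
The plan is to reduce the statement to Lemma~\ref{lem:1}. Two structural facts make this essentially immediate. First, because $i=T$ means $n_2(T)=N$, the terminal modification of \eqref{pro:2} is reinstated: $\mu=0$ and there is no dual correction, so $g_N(\bx_N)$ is the terminal objective exactly as in \eqref{pro:1}. Hence $G^T$ and $H^T$ are precisely the trailing blocks of the full‑horizon $G$ and $H$ obtained by deleting the stages $[0,n_1(T))$, and $ReH^T$ in the sense of Definition~\ref{def:4} coincides with $ReH^{n_1(T):N}$ in the sense of Lemma~\ref{lem:1} at matching evaluation points. Second, $n_1(T)=(T-1)L=\lceil (N-M)/L\rceil L\ge N-M$, so the last receding horizon spans at most $M$ stages.

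First I would embed the subproblem's evaluation point into a full‑horizon point. Given $(\tbz_T^0,\tblambda_T^0)\in\N_\epsilon(\tbz_T^\star,\tblambda_T^\star)$, define $\bz\coloneqq(\tz_{0:n_1(T)-1};\tbz_T^0)$ and $\blambda\coloneqq(\tlambda_{0:n_1(T)-1};\blambda_{n_1(T):N-1,T}^0)$, i.e., keep the true solution on the leading block and place the subproblem iterate on $[n_1(T),N]$. By construction the only perturbed components are $\bz_{n_1(T):N}$ and $\blambda_{n_1(T):N-1}$, each lying in its own $\epsilon$‑cube, and since $N-n_1(T)\le M$ we get $(\bz,\blambda)\in\N_{\epsilon,N-n_1(T)}(\tz,\tlambda)\subseteq\N_{\epsilon,M}(\tz,\tlambda)$ by the nesting in \eqref{equ:setseq}. (The multiplier $\blambda_{n_1(T)-1}$ enters no reduced Hessian, by Definition~\ref{def:1}, so its value is immaterial.) Assumption~\ref{ass:M:SOSC} then yields $ReH(\bz,\blambda_{0:N-1};\bd_{0:N-1})\succeq\gamma_H I$.

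Next I would apply Lemma~\ref{lem:1} with $k=n_1(T)$, which gives $ReH^{n_1(T):N}(\bz_{n_1(T):N},\blambda_{n_1(T):N-1};\bd_{n_1(T):N-1})\succeq\gamma_H I$. It remains only to match evaluation points: $\bz_{n_1(T):N}$ is, up to the reordering in Definition~\ref{def:4}, exactly $\tbz_T^0$; $\blambda_{n_1(T):N-1}$ is exactly the part of $\tblambda_T^0$ entering the Hessian blocks; and $\bd_{n_1(T):N-1}$ agrees with the corresponding components of $\tbd_T$ (the extra terminal entries of $\tbd_T$ are irrelevant when $i=T$). Combined with the identification $ReH^T=ReH^{n_1(T):N}$ from the first step, this gives $ReH^T(\tbz_T^0,\tblambda_T^0;\tbd_T)\succeq\gamma_H I$, which is the claim.

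There is no real computation here; the only points needing care are bookkeeping ones—checking that $\P_T$ carries no terminal modification, so that $ReH^T$ genuinely is the reduced Hessian of a left‑truncation (the situation handled by Lemma~\ref{lem:1}), and verifying that the embedded point lands inside $\N_{\epsilon,M}(\tz,\tlambda)$ rather than merely inside the topological neighborhood $\N_\epsilon(\tz,\tlambda)$. The latter is exactly why the weaker Assumption~\ref{ass:M:SOSC} suffices, and it is the one place the regime $N\gg M$ (equivalently $N-n_1(T)\le M$) is used.
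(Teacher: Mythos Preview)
Your proposal is correct and follows essentially the same approach as the paper: embed $(\tbz_T^0,\tblambda_T^0)$ into a full-horizon point by padding with $(\tz,\tlambda)$ on $[0,n_1(T))$, use $N-n_1(T)\le M$ and \eqref{equ:setseq} to land in $\N_{\epsilon,M}(\tz,\tlambda)$, invoke Assumption~\ref{ass:M:SOSC}, and then apply Lemma~\ref{lem:1} with $k=n_1(T)$. Your additional remarks on why the terminal modification vanishes at $i=T$ (so that $ReH^T=ReH^{n_1(T):N}$) make explicit a point the paper leaves implicit, but the argument is otherwise identical.
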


\begin{proof}
See Appendix \ref{pf:cor:1}.
\end{proof}

Next, we consider the first $T-1$ horizons. The following theorem suggests that subproblems can inherit uniform positive definiteness of the reduced Hessian from the full problem provided $\mu$ is uniformly large enough.
\begin{theorem}[SOSC for subproblems]\label{thm:1}
Suppose Assumptions \ref{ass:M:SOSC}, \ref{ass:M:control}, and \ref{ass:M:Ubound} hold. For any $i\in[1, T-1]$, if $(\tbz_i^0, \tblambda_i^0) \in \N_\epsilon(\tbz_i^\star, \tblambda_i^\star)$, then, by setting 
\begin{align}\label{equ:set:mu}
\mu\geq \frac{16\Upsilon(\Upsilon^{6t} - \Upsilon^{4t})}{\gamma_C^2} \coloneqq\mu(\Upsilon, \gamma_C, t),
\end{align}
(where $\Upsilon$ is defined in Assumption \ref{ass:M:Ubound}, $\gamma_C, t$ are defined in Assumption \ref{ass:M:control}) we have
\begin{align*}
H^i_{\rm Re}(\tbz_i^0, \tblambda_i^0; \tbd_i)\succeq\gamma_H I.
\end{align*}

\end{theorem}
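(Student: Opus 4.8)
The plan is to reduce the claim to the following statement: for any $\bw_i = (\bp_{n_1(i):n_2(i)}, \bq_{n_1(i):n_2(i)-1})$ in the null space of $G^i$, we have $\bw_i^T H^i \bw_i \geq \gamma_H \|\bw_i\|^2$. The only difference between $H^i$ and the corresponding diagonal block of the full Hessian $H$ is the last block $H^i_{n_2(i)} = Q_{n_2(i)} + \mu$, which adds $\mu\|\bp_{n_2(i)}\|^2$ to the quadratic form. The obstacle, as flagged in the discussion after Lemma~\ref{lem:1}, is that a null vector of $G^i$ does \emph{not} extend to a null vector of $G$ by padding with zeros: padding backward in time works (Lemma~\ref{lem:1}), but the forward direction forces nonzero $\bq_{n_2(i):n_2(i)+t-1}$ to steer the state back to zero, and controllability (Assumption~\ref{ass:M:control}) is exactly what makes that possible in at most $t$ steps.

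So the core construction I would carry out: given $\bw_i$ satisfying $\bp_{n_1(i)} = 0$ and $\bp_{j+1} = A_j\bp_j + B_j\bq_j$ for $j\in[n_1(i), n_2(i)-1]$, first pad backward by zeros on $[0, n_1(i))$ as in Lemma~\ref{lem:1}. Then, starting from the terminal state $\bp_{n_2(i)}$, use the controllability matrix $\Xi_{n_2(i), t_{n_2(i)}}$ evaluated near the solution to pick controls $\bq_{n_2(i):n_2(i)+t_{n_2(i)}-1}$ driving the linearized state to $\0$; set $\bp_j$ for $j > n_2(i)+t_{n_2(i)}$ to zero and all further controls to zero. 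This yields a full-horizon vector $\bw\in\mathrm{kernel}(G)$ agreeing with $\bw_i$ on $[n_1(i), n_2(i)]$ and living in an $M$-consecutive-stage perturbation set — I need to check the window $[n_1(i), n_2(i)+t]$ has length $\le M + t$; since $M = SL$ with $S>2$ and $t$ is a fixed constant, I would absorb this either by noting the perturbation is still confined to a band and invoking Assumption~\ref{ass:M:SOSC} (possibly after remarking the band length is $O(M)$), or more carefully by placing the extension inside the existing $M$-window using that $(\tbz_i^0,\tblambda_i^0)$ perturbs only $n_2(i)-n_1(i) = M$ stages and the padding uses the true solution elsewhere. The cleanest route is: the extended point $(\bbz,\bblambda) = (\tz_{0:n_1(i)-1}; \text{perturbed block}; \text{steering block}; \tz_{\cdots})$ — but the steering block sits at stages beyond $n_2(i)$ where $\bp$ is small (bounded by $\|\bp_{n_2(i)}\|$ times powers of $\Upsilon$), so the \emph{evaluation point} there can be taken as the true solution (the decision variables $\tbz$), keeping us in $\N_{\epsilon,M}$; only the \emph{test vector} $\bw$ is nonzero there, which is allowed.

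The quantitative heart is the bookkeeping that links the two quadratic forms. Write $\bw^T H\bw \ge \gamma_H\|\bw\|^2$ from Assumption~\ref{ass:M:SOSC}. On the left, $\bw^T H\bw = \bw_i^T H^i\bw_i - \mu\|\bp_{n_2(i)}\|^2 + (\text{tail terms on stages } >n_2(i))$, where the tail terms are $\sum \bw_j^T H_j\bw_j$ over the $\le t$ steering stages, bounded in absolute value by $\Upsilon$ times $\|\bw_{\text{tail}}\|^2$. Using the lower bound $\Xi\Xi^T\succeq\gamma_C I$, the minimum-norm steering controls satisfy $\|\bq_{\text{tail}}\| \le (\text{something like } \Upsilon^{2t}/\gamma_C)\|\bp_{n_2(i)}\|$ and the intermediate states $\|\bp_j\|$ are likewise $\le \Upsilon^{t}\|\bp_{n_2(i)}\|$-ish, so $\|\bw_{\text{tail}}\|^2 \le C(\Upsilon,\gamma_C,t)\|\bp_{n_2(i)}\|^2$ — tracing constants should reproduce the factor matching $\mu \ge 16\Upsilon(\Upsilon^{6t}-\Upsilon^{4t})/\gamma_C^2$. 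On the right, $\|\bw\|^2 \ge \|\bw_i\|^2$ (the tail only adds). Combining: $\gamma_H\|\bw_i\|^2 \le \bw^T H\bw \le \bw_i^T H^i\bw_i - \mu\|\bp_{n_2(i)}\|^2 + \Upsilon\|\bw_{\text{tail}}\|^2 \le \bw_i^T H^i\bw_i - \mu\|\bp_{n_2(i)}\|^2 + \Upsilon C\|\bp_{n_2(i)}\|^2$, and choosing $\mu \ge \Upsilon C$ makes the last two terms nonpositive, giving $\bw_i^T H^i\bw_i \ge \gamma_H\|\bw_i\|^2$. I expect the main obstacle to be precisely the constant-chasing in the controllability estimate — bounding the minimum-norm control and the propagated states in terms of $\Upsilon$, $\gamma_C$, $t$ tightly enough to land on the stated $\mu(\Upsilon,\gamma_C,t)$ — together with the mild technical care needed to confirm the forward-extension window stays inside the $M$-neighborhood hypothesis of Assumption~\ref{ass:M:SOSC} (or to argue why it suffices).
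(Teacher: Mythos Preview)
Your plan is correct and matches the paper's proof: pad backward with zeros (Lemma~\ref{lem:1}), steer forward to zero via controllability in at most $t$ steps while keeping the \emph{evaluation point} at the true solution beyond stage $n_2(i)$ (your resolution of the $\N_{\epsilon,M}$ concern is exactly how the paper handles it), bound the tail quadratic cost by $\Upsilon\cdot C(\Upsilon,\gamma_C,t)\|\bp_{n_2(i)}\|^2$, and absorb it with $\mu$. Two small refinements the paper makes that you should incorporate: (i) it separates the edge case $n_2(i)\ge N-t$, where Assumption~\ref{ass:M:control} provides no controllability and one instead propagates with zero control for the remaining $\le t$ stages; (ii) in the main case it sets $\bq_{n_2(i)}=0$, propagates one step via $A_{n_2(i)}$ at the perturbed point, and invokes controllability from stage $n_2(i)+1$ (evaluated at the true solution), which keeps the stage-$n_2(i)$ contribution as $\bp_{n_2(i)}^TQ_{n_2(i)}\bp_{n_2(i)}$ and makes the constant-chasing that lands on $\mu(\Upsilon,\gamma_C,t)$ cleaner.
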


\begin{proof}
See Appendix \ref{pf:thm:1}.
\end{proof}

In Theorem \ref{thm:1}, we prove that the reduced Hessian is positive definite for subproblems in the same neighborhood provided that the scale parameter $\mu$ is set larger than the stated threshold. Moreover, the lower bound of the reduced Hessian is still $\gamma_H$. Consequently, we know that the KKT matrices in \eqref{equ:Newton} are invertible in the neighborhood. See Lemma 16.1 in \cite{Nocedal2006Numerical} for a simple proof. We emphasize that $\mu$ is set globally and independent from the horizon location; hence all subproblems have common properties. In the next subsection, we focus on the error recursion of the Newton step and derive the stability of our MPC strategy.

\begin{remark}	
The parameter $\mu$ may enlarge the upper bound~of the Hessian matrices. However, the uniform boundedness~condition in Assumption \ref{ass:M:Ubound} still holds for the subproblems, since the threshold does not grow with $i$ (i.e. $\mu$ need not go to infinity).
\end{remark}

\subsection{One-step error recursion}\label{sec:4.2}

We focus on the Newton iteration and analyze how errors in each stage get updated by it. We establish the stability of our iterates in the sense that they keep staying in the same neighborhood of $(\tz, \tlambda)$ and, combining with consequences of positive definiteness of reduced Hessian, show Algorithm \ref{alg:1} can be carried out successfully. Here, by successfully we mean that the initial point $(\tbz_i^0, \tblambda_i^0)$ is indeed in the neighborhood that ensures the nonsingularity of the KKT matrix.

One of the key results we establish is related to the structure of the KKT inverse, which is implied by the results in \cite{Na2020Exponential}. For ease of presentation, we partition the KKT matrix into several blocks. We  study the full problem first and then illustrate how to apply the same results on each subproblem.

\begin{definition}\label{def:5}
Given the evaluation point $(\bz, \blambda, \bd)$ ($\blambda_{-1}$ and $\bd_{-1}$ are not needed), we define the KKT matrix by
\begin{align*}
K(\bz, \blambda; \bd) = \begin{pmatrix}
H(\bz, \blambda; \bd) & G^\top(\bz_{0:N-1}; \bd)\\
G(\bz_{0:N-1}; \bd) & 0
\end{pmatrix}.
\end{align*}
We partition its inverse as
\begin{align*}
K^{-1}(\bz, \blambda; \bd) = \left(\begin{smallmatrix}
\cbr{K^{-1}_{(i, j), 1}}_{i, j = 0}^N & \cbr{K^{-1}_{(i, j), 2}}_{i = 0, j = -1}^{N, N-1} \\
\cbr{(K^{-1}_{(i, j), 2})^\top}_{i = 0, j = -1}^{N, N-1} & \cbr{K^{-1}_{(i, j), 3}}_{i, j = -1}^{N-1}
\end{smallmatrix}\right).
\end{align*}
In particular, $\forall i, j\in[N]$, $K^{-1}_{(i, j), 1} \in\mR^{(n_x+n_u)\times (n_x+n_u)}$ is the $(i, j)$-block corresponding to  stage $i$ in the row and  stage $j$ in the column. Analogously, $K^{-1}_{(i, j), 2} \in\mR^{(n_x+n_u)\times n_x}$ and $K^{-1}_{(i, j), 3} \in\mR^{n_x\times n_x}$. We can also define similar partitions for $K^i(\tbz_i, \tblambda_i; \tbd_i)$, the KKT matrix for subproblem $i$. Its blocks are denoted by $(K^i)^{-1}_{(j_1, j_2), h}$ with $h = 1,2,3$.
\end{definition}

The next lemma characterizes the structure of $K^{-1}$.

\begin{lemma}[Structure of KKT inverse]\label{lem:2}

For any point $(\bz, \blambda)$ such that the uniform SOSC \eqref{equ:SOSC}, controllability condition \eqref{equ:control}, and upper boundedness condition \eqref{equ:bound} hold, we have
\begin{align*}
\|K^{-1}_{(i, j), 1}\|\leq &\Upsilon_K\rho^{|i - j|}, \text{\ \ \ \ \ \ for\ } i, j\in[N],\\
\|K^{-1}_{(i, j), 2}\|\leq &\begin{cases}
\Upsilon_K\rho^{i} & \text{\ \ for\ } i\in[N], j=-1,\\
\Upsilon_K\rho^{|i - j|} & \text{\ \ for\ } i\in[N], j\in[N-1],
\end{cases}\\
\|K^{-1}_{(i, j), 3}\|\leq &\begin{cases}
\Upsilon_K\rho^{i+1} & \text{for\ } i\in[-1, N-1],  j=-1,\\
\Upsilon_K\rho^{|i+1- j|} & \text{for\ } i\in[-1, N-1], j\in[N-1],
\end{cases}
\end{align*}
for some constants $\Upsilon_K>0$, $\rho\in(0, 1)$, independent from $N$.
\end{lemma}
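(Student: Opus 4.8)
\textbf{Proof proposal for Lemma~\ref{lem:2} (Structure of KKT inverse).}

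The plan is to reduce the claimed decay estimates to the sensitivity result of \cite{Na2019Sensitivity} by identifying the blocks of $K^{-1}$ with solution sensitivities of \eqref{pro:1} to perturbations of the right-hand side of the KKT system. First I would observe that $K$ is exactly the Jacobian of the KKT map $(\bz,\blambda)\mapsto (\nabla_{\bz}\mL; \nabla_{\blambda}\mL)$, and that solving $K\,(\Delta\bz;\Delta\blambda) = (\br_1;\br_2)$ is, up to sign, the same linear system one obtains by linearizing the optimality conditions of a perturbed problem in which stage $j$ receives a perturbation supported on the $j$-th gradient block (for the $\br_1$ part) or the $j$-th constraint block (for the $\br_2$ part, i.e.\ a perturbation of $\bd_{j}$, with $j=-1$ corresponding to the initial condition $\bbx_0$). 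Under the uniform SOSC \eqref{equ:SOSC}, controllability \eqref{equ:control}, and boundedness \eqref{equ:bound}, the sensitivity theorem of \cite{Na2019Sensitivity} asserts precisely that a unit perturbation at stage $j$ changes the primal-dual solution at stage $i$ by an amount bounded by $\Upsilon_K\rho^{|i-j|}$ for constants $\Upsilon_K,\rho$ depending only on $\gamma_H,\gamma_C,t,\Upsilon$ and hence independent of $N$. Reading off the columns of $K^{-1}$ one block at a time then gives the three families of bounds, with the $j=-1$ cases handled by noting that the initial-condition perturbation ``enters'' the recursion one stage earlier, which is the source of the $\rho^{i}$ versus $\rho^{i+1}$ shift in the exponents between the second and third displays (the $(i,j),3$ blocks are indexed starting at $i=-1$, matching the multiplier index range).

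Concretely, I would proceed as follows. (i) Write down the block structure of $K$ from Definitions \ref{def:1} and \ref{def:5}, and note $K$ is invertible because LICQ holds ($G$ has full row rank) and uniform SOSC holds, by Lemma~16.1 in \cite{Nocedal2006Numerical}. (ii) Fix a column index $j$ and a unit right-hand side $\be$ supported on block $j$; set $(\Delta\bz;\Delta\blambda) = K^{-1}(\0;\be)$ or $K^{-1}(\be;\0)$ as appropriate. (iii) Recognize $(\Delta\bz;\Delta\blambda)$ as the directional derivative of the primal-dual solution of \eqref{pro:1} with respect to the corresponding component of the problem data (for constraint-block perturbations this is differentiation in $\bd_j$; for gradient-block perturbations one augments $g_j$ by a linear term, which is still covered by the twice-differentiable, boundedness, and SOSC framework of \cite{Na2019Sensitivity}). (iv) Invoke the exponential-decay-of-sensitivity theorem to bound $\|\Delta\bz_i\|$ and $\|\Delta\blambda_i\|$ by $\Upsilon_K\rho^{|i-j|}$ (resp.\ $\Upsilon_K\rho^{i}$, $\Upsilon_K\rho^{i+1}$ when $j=-1$), uniformly in $N$. (v) Since the $(i,j)$-block of $K^{-1}$ in a given row group is obtained by selecting the stage-$i$ sub-vector of $(\Delta\bz;\Delta\blambda)$ over all unit right-hand sides $\be$ in block $j$, its operator norm is bounded by the corresponding sensitivity bound; assembling over $i,j$ and over the three row/column groups yields the stated inequalities.

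The main obstacle I anticipate is the bookkeeping needed to make step (iii) airtight: one must check that \emph{every} block of the right-hand side of the KKT system corresponds to an admissible perturbation covered by the hypotheses of the sensitivity theorem in \cite{Na2019Sensitivity}, and that the resulting decay constants $\Upsilon_K,\rho$ can be chosen \emph{uniformly} across all block positions $j\in\{-1,0,\ldots,N\}$ (not just for $\bd$-perturbations, which is the form stated in the reference). Perturbations in the gradient blocks $\nabla_{\bz_j}\mL$ amount to adding a linear functional to $g_j$, which preserves $\nabla^2 g_j$ and hence leaves Assumptions \ref{ass:M:SOSC}--\ref{ass:M:Ubound} intact with the same constants, so uniformity goes through; the slightly delicate point is the initial-condition/terminal-multiplier blocks and the $\rho^{i+1}$ shift, which I would justify by tracing how $\blambda_{-1}$ enters \eqref{equ:Lagrange:full} (only through $-\blambda_{-1}^T\bd_{-1}$ and the $\bx_0=\bbx_0$ constraint) and arguing the perturbation is propagated by one application of the dynamics before it is ``seen'' by stage $0$. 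A secondary, purely technical, point is that the off-diagonal decay must be stated for $|i-j|$ in the integer-interval sense consistent with the paper's notation; this is immediate once the one-sided estimates for $i\ge j$ and $i\le j$ are combined. Modulo these checks, the proof is a direct translation of \cite{Na2019Sensitivity} and I would defer the detailed verification to an appendix or to a citation of the precise theorem statement there.
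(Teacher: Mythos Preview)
Your strategy is the paper's: choose a unit right-hand side supported on a single stage block, solve $K\bb=\ba$, and invoke the exponential-decay sensitivity result to bound the stage-$i$ component of $\bb$; the block norms of $K^{-1}$ then follow by taking the supremum over unit vectors. Two points are worth tightening.

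First, your step (iii) frames $K^{-1}\be$ as the directional derivative of the solution to the \emph{nonlinear} problem \eqref{pro:1}. But Lemma~\ref{lem:2} is stated at an arbitrary point $(\bz,\blambda)$ satisfying \eqref{equ:SOSC}, \eqref{equ:control}, \eqref{equ:bound}, not necessarily a KKT point of \eqref{pro:1}; at such a point $K^{-1}$ is not differentiating any nonlinear solution map. The paper sidesteps this by observing that $K\bb=\ba$ is \emph{itself} the KKT system of an explicit linear-quadratic problem (quadratic cost $\tfrac12\bw^T H\bw-\bv^T\bw$, dynamics $\bp_{k+1}=A_k\bp_k+B_k\bq_k+\balpha_k$, $\bp_0=\balpha_{-1}$) with $H_k,A_k,B_k$ frozen at $(\bz,\blambda)$. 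SOSC, controllability, and boundedness transfer verbatim to this LQ problem, and the sensitivity theorem applies to it directly. This also dissolves your ``main obstacle'': every right-hand-side block $(\bl_j,\br_j,\balpha_j)$ is simply data of the LQ problem, so no augmentation of $g_j$ or admissibility bookkeeping is needed.

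Second, the paper invokes not only \cite{Na2019Sensitivity} for the primal decay $\|(\bp_i;\bq_i)\|$ but also Proposition~2.16 of \cite{Xu2018Exponentially}, which supplies an explicit formula for the LQ multipliers $\bbeta_k$; combining that formula with the matrix bounds yields the dual decay $\|\bbeta_{i-1}\|\le\Upsilon_K\rho^{|i-j|}$. Your proposal cites only \cite{Na2019Sensitivity}, which as stated handles the primal part; the dual part needs this extra ingredient. The $\rho^{i+1}$ shift for $j=-1$ in the third display then falls out mechanically from the index convention (the multiplier at stage $i$ is $\bbeta_{i-1}$), not from a separate propagation argument.
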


\begin{proof}
See  Appendix \ref{pf:lem:2}.
\end{proof}

Lemma \ref{lem:2} shows that the inverse of the KKT matrix has exponential decay in the sense that the operator norm of each $(i, j)$-block decreases exponentially as $|i-j|$ increases. We note that the first $n_x$ columns in $K^{-1}_{\cdot, 2}$, $K^{-1}_{\cdot, 3}$ correspond to the perturbation on the initial condition. To make the inverse structure visible, we plot the bounds of Lemma \ref{lem:2} in Figure~\ref{fig:3}.

\begin{figure}[!tp]
\centering     
\includegraphics[width=4.5cm,height=4.5cm]{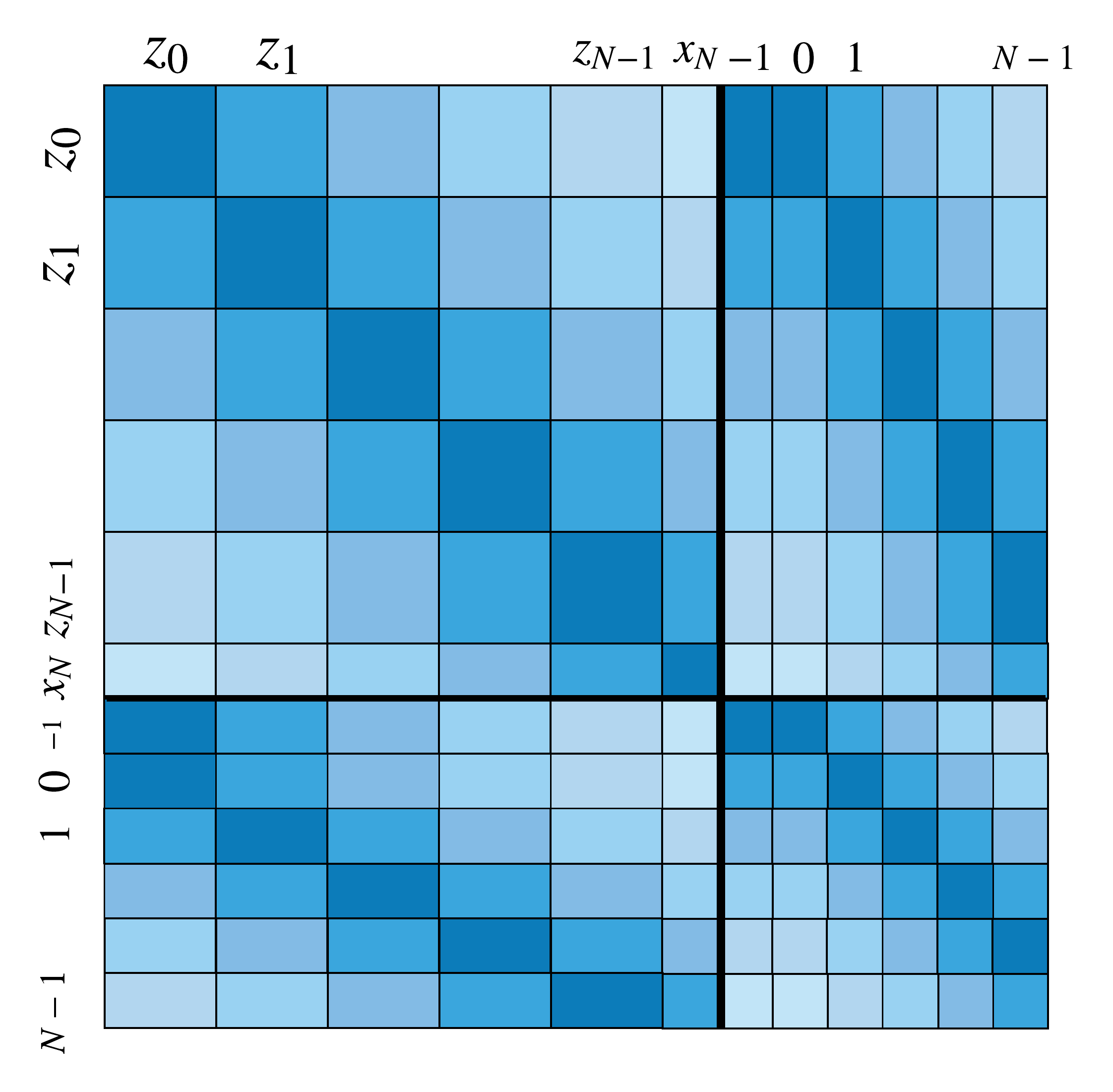}
\caption{\textit{Operator norm trend for KKT inverse.} We plot the decay trend for all blocks in $K^{-1}$. The darker the color, the larger the magnitude.}\label{fig:3}
\end{figure}

In Lemma \ref{lem:2}, $\Upsilon_K$ and $\rho$ are functions of $\Upsilon, \gamma_H, \gamma_C$, $t$, which are independent from the horizon length $N$. Therefore, the same decay trend can be applied on dynamic programs for~any length provided three conditions hold: positive definiteness of the reduced Hessian,~controllability condition, and upper boundedness condition. 

\begin{remark}\label{rem:4}
Lemma \ref{lem:2} is implied by the results in \cite{Na2020Exponential}. In particular, let us consider the Newton equation $K\bb = \ba$. The magnitude of each block of $K^{-1}$ can be reflected by the magnitude of $\bb$ by setting $\ba$ to different unit vectors, since $\bb = K^{-1}\ba$. Note that the Newton equation is the necessary and sufficient condition of a linear-quadratic dynamic program, whose solution (i.e. $\bb$) is similar to the one of Problem (3.1) in \cite{Na2020Exponential}.  We also note that an exponential decay structure has been established for the inverse of banded symmetric matrices in \cite{Frommer2017Bounds}. While qualitatively the results are quite similar, the two methods result in different parameters in the resulting exponential decay bounds. If we permute the KKT matrix to make it have block tridiagonal structure and directly apply \cite{Frommer2017Bounds}, we derive an entrywise exponential decay structure for KKT inverse with a rate roughly being $(\rho')^{1/(n_x+n_u)}$ for $\rho'\in(0,1)$, where $n_x+n_u$ (up to a factor) is the bandwidth of the (permuted) KKT matrix. Such a result is related but does not imply the result in Lemma \ref{lem:2}: (i) we desire a blockwise decay structure of KKT inverse to study the error at each stage; (ii) by investigating the underlying dynamic program of the Newton system and making use of the conditions of the program (such as controllability), our decay rate (see \cite[(5.9)]{Na2020Exponential}) does not depend on dimensions $n_x, n_u$, but depends only on constants in Assumptions \ref{ass:M:SOSC}-\ref{ass:M:Ubound}. Although rescaling the problem and considering blockwise implications of \cite{Frommer2017Bounds} reduces some of these variances, there are other quantities that still behave quite differently, as far as we can infer (for example the derived operator norm bound of each block).

\end{remark}

The next corollary suggests that the same argument holds for~subproblems.

\begin{corollary}\label{cor:2}
Suppose Assumptions \ref{ass:M:SOSC}, \ref{ass:M:control}, and \ref{ass:M:Ubound} hold and $\mu$ satisfies \eqref{equ:set:mu}. Then, for any $i\in[1, T]$, if $(\tbz_i^0, \tblambda_i^0)\in\N_\epsilon(\tbz_i^\star, \tblambda_i^\star)$,
\begin{align*}
\|(K^i)^{-1}_{(j_1, j_2), 1}\|\leq &\Upsilon_K\rho^{|j_1- j_2|}, \text{\ \ for\ } j_1, j_2\in[n_1(i), n_2(i)],\\
\|(K^i)^{-1}_{(j_1, j_2), 2}\|\leq &\begin{cases}
\Upsilon_K\rho^{j_1-n_1(i)} & \text{for\ }\substack{ j_1\in[n_1(i), n_2(i)],\\ j_2=n_1(i)-1,}\\
\Upsilon_K\rho^{|j_1 - j_2|} & \text{for\ } \substack{j_1\in[n_1(i), n_2(i)], \\j_2\in[n_1(i), n_2(i)-1],}
\end{cases}\\
\|(K^i)^{-1}_{(j_1, j_2), 3}\|\leq &\begin{cases}
\Upsilon_K\rho^{j_1+1 - n_1(i)} & \text{for\ } \substack{j_1\in[n_1(i)-1, n_2(i)-1], \\ j_2=n_1(i)-1,}\\
\Upsilon_K\rho^{|j_1+1- j_2|} & \text{for\ } \substack{j_1\in[n_1(i)-1, n_2(i)-1],\\ j_2\in[n_1(i), n_2(i)-1],}
\end{cases}
\end{align*}
for some global constants $\Upsilon_K$ and $\rho\in(0, 1)$.
\end{corollary}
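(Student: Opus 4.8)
The plan is to observe that each subproblem $\P_i(\tbd_i)$ from \eqref{pro:2} is itself an equality-constrained NLDP of exactly the form \eqref{pro:1}, posed over the shorter horizon $[n_1(i),n_2(i)]$ with a terminal cost that depends only on $\bx_{n_2(i)}$ (the modified terminal objective for $i<T$, or plain $g_N$ for $i=T$). Consequently Lemma \ref{lem:2} --- which holds for any NLDP of the form \eqref{pro:1} at any point where uniform SOSC, controllability, and upper boundedness all hold --- can be applied directly to the KKT matrix $K^i(\tbz_i^0,\tblambda_i^0;\tbd_i)$, provided I verify those three hypotheses at the point $(\tbz_i^0,\tblambda_i^0)$ with constants that are uniform in $i$ and $N$. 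The bounds in the statement are then precisely Lemma \ref{lem:2}'s bounds, re-indexed so that stage $n_1(i)$ takes the role of stage $0$ and the multiplier $\blambda_{n_1(i)-1}$ takes the role of $\blambda_{-1}$; that relabeling accounts for the exponents $\rho^{j_1-n_1(i)}$ and $\rho^{j_1+1-n_1(i)}$ in the boundary cases $j_2=n_1(i)-1$.

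To verify the hypotheses I would proceed as follows. For the \emph{SOSC}: Theorem \ref{thm:1} gives $ReH^i(\tbz_i^0,\tblambda_i^0;\tbd_i)\succeq\gamma_H I$ for every $i\in[1,T-1]$ under the assumed choice of $\mu$ in \eqref{equ:set:mu} together with $(\tbz_i^0,\tblambda_i^0)\in\N_\epsilon(\tbz_i^\star,\tblambda_i^\star)$, and Corollary \ref{cor:1} supplies the same bound for $i=T$; in both cases the constant $\gamma_H$ is the same. For the \emph{controllability}: the dynamics of $\P_i$ are inherited verbatim from \eqref{pro:1}, so its controllability matrices are the full-problem matrices $\Xi_{k,t_k}$ restricted to $k\in[n_1(i),n_2(i)-t]$, and for such $k$ one has $k+t_k-1\le n_2(i)-1$, so that $\Xi_{k,t_k}$ only involves stages inside the subproblem; since $[n_1(i),n_2(i)-t]\subseteq[0,N-t]$ and, because $(\tbz_i^0,\tblambda_i^0)\in\N_\epsilon(\tbz_i^\star,\tblambda_i^\star)$, the relevant primal subvectors of $\tbz_i^0$ lie in $\N_\epsilon(\tz_{k:k+t_k-1})$, Assumption \ref{ass:M:control} delivers $\Xi_{k,t_k}\Xi_{k,t_k}^T\succeq\gamma_C I$ with the same $\gamma_C,t$. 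For the \emph{upper boundedness}: each block $\{A_k,B_k,H_k\}_{k=n_1(i)}^{n_2(i)-1}$ is a full-problem block bounded by $\Upsilon$ via Assumption \ref{ass:M:Ubound}, and the only new block is $H^i_{n_2(i)}=Q_{n_2(i)}+\mu I$ (or $Q_N$ when $i=T$), whose norm is at most $\Upsilon+\mu$; since $\mu$ is fixed globally and independent of $i$, condition \eqref{equ:bound} holds for every subproblem with the single constant $\Upsilon+\mu$.

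With the three hypotheses in hand I would simply invoke Lemma \ref{lem:2} on each $\P_i(\tbd_i)$: the output constants $\Upsilon_K$ and $\rho\in(0,1)$ are the functions of $\gamma_H,\gamma_C,t,\Upsilon+\mu$ produced by that lemma, hence independent of $N$ and of $i$, which is exactly the asserted globality. I do not expect a genuine obstacle here --- the mathematical content lives entirely in Theorem \ref{thm:1}, Corollary \ref{cor:1}, and Lemma \ref{lem:2}. The only things that require care are bookkeeping: carrying the fixed additive shift $\mu$ through the boundedness constant (so that the resulting $\Upsilon_K,\rho$ may legitimately be declared global), keeping the index ranges consistent in the controllability step, and matching the shifted exponents in the three boundary cases of the statement.
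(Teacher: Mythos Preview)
Your proposal is correct and follows essentially the same approach as the paper's proof: verify uniform SOSC via Theorem~\ref{thm:1} (for $i<T$) and Corollary~\ref{cor:1} (for $i=T$), verify controllability on $[n_1(i),n_2(i)-t]\subseteq[0,N-t]$ via Assumption~\ref{ass:M:control}, verify upper boundedness with the enlarged constant $\Upsilon+\mu$ due to the terminal quadratic term, and then invoke Lemma~\ref{lem:2} on the subproblem. Your additional remarks about the re-indexing of exponents and the globality of $\Upsilon_K,\rho$ through their dependence on $(\gamma_H,\gamma_C,t,\Upsilon+\mu)$ are exactly the bookkeeping the paper alludes to in its subsequent remark.
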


\begin{proof}
See Appendix \ref{pf:cor:2}.
\end{proof}

\begin{remark}
We use the same notations for $\Upsilon_K$, $\rho$ in Lemma~\ref{lem:2} and Corollary \ref{cor:2}. However, since the upper bound of subproblems, $\Upsilon + \mu(\Upsilon, \gamma_C, t)$, is different from the one for the full problem, $\Upsilon$, constants $\Upsilon_K$, $\rho$ are actually different. We do not make this distinction in this paper since we always deal with subproblems.
\end{remark}

\begin{remark}
We mention that the provable decay structure in Corollary \ref{cor:2} is not symmetric, as shown in Figure~\ref{fig:3} (see the bottom right block). But we can rescale $\Upsilon_K$ by $\Upsilon_K/\rho$ to make it symmetric.
\end{remark}

We then establish the error recursion. For $i \in [1, T]$ and $\Id = 0$ or $1$, we define $\Psi_{k, i}^\Id = \|(\bz_{k, i}^\Id - \tz_k; \blambda_{k, i}^\Id - \tlambda_k)\|$ for $k\in[n_1(i), n_2(i)-1]$ and $\Psi_{n_2(i), i}^\Id = \|\bx_{n_2(i), i}^\Id - \tx_{n_2(i)}\|$  (for the definition of superscript ${\Id}$, see the discussion before
\eqref{equ:trans1}). The result is presented in the next theorem.

\begin{theorem}[One-step error recursion]\label{thm:2}

Under the same setup of Corollary \ref{cor:2} and also supposing that Assumption \ref{ass:M:Lip:cond} holds, we have that for $i\in[1, T]$, Algorithm \ref{alg:1} satisfies 
\begin{footnotesize}
\begin{multline}\label{equ:error}
\Psi_{k, i}^1\leq \Upsilon_C\bigg(\overbrace{\sum_{j=n_1(i)}^{n_2(i)}\rho^{|k-j|}(\Psi_{j, i}^0)^2}^{\text{algorithmic error}} \\
+  \underbrace{\rho^{k - n_1(i)}\|\bx_{n_1(i), i}^0 - \tx_{n_1(i)}\| + \rho^{n_2(i) - k}\epsilon}_{\text{perturbation error}}\bigg)
\end{multline}
\end{footnotesize}
for some constant $\Upsilon_C$ and $\rho\in(0, 1)$.

\end{theorem}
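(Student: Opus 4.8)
The plan is to measure the error of the single Newton iterate against the \emph{right} target, namely the truncated true solution $(\tbz_i^\star,\tblambda_i^\star)$, so the first thing I would establish is what subproblem this target actually solves. I claim that $(\tbz_i^\star,\tblambda_i^\star)$ satisfies $\nabla\mL^i(\tbz_i^\star,\tblambda_i^\star;\tbd_i^\star)=\0$, where $\tbd_i^\star=(\tx_{n_1(i)};\bd_{n_1(i):n_2(i)};\tz_{n_2(i)};\tlambda_{n_2(i)})$ is the \emph{true} extended reference and $\nabla\mL^i$ is the stacked primal–dual gradient appearing on the right of \eqref{equ:Newton} (indeed, by Theorem~\ref{thm:1}/Corollary~\ref{cor:1} it is the local solution of $\P_i(\tbd_i^\star)$). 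This is a direct comparison of first-order conditions: when $\bx^0_{n_2(i)}=\tx_{n_2(i)}$, $\bu^0_{n_2(i)}=\tu_{n_2(i)}$, $\blambda^0_{n_2(i)}=\tlambda_{n_2(i)}$ and $\bbx_{n_1(i)}=\tx_{n_1(i)}$, the $\mu$-penalty in \eqref{pro:2} contributes nothing, the term $-(\blambda^0_{n_2(i)})^Tf_{n_2(i)}$ supplies exactly the coupling that stage $n_2(i)$ would otherwise receive from stage $n_2(i)+1$ in \eqref{pro:1}, and the remaining stationarity/feasibility equations on $[n_1(i),n_2(i)]$ coincide with those satisfied by $(\tz,\tlambda)$ (no $\bu_{n_2(i)}$-stationarity is needed since $\bu_{n_2(i)}$ is not a variable of $\P_i$). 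For $i=T$ there is no terminal modification and the claim is immediate.

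Next I would expand the update. By Corollary~\ref{cor:2} the KKT matrix $K^i:=K^i(\tbz_i^0,\tblambda_i^0;\tbd_i)$ is nonsingular, so \eqref{equ:Newton}–\eqref{equ:Update} read $(\tbz_i^1;\tblambda_i^1)=(\tbz_i^0;\tblambda_i^0)-(K^i)^{-1}\nabla\mL^i(\tbz_i^0,\tblambda_i^0;\tbd_i)$. Subtracting $(\tbz_i^\star;\tblambda_i^\star)$, inserting $\0=\nabla\mL^i(\tbz_i^\star,\tblambda_i^\star;\tbd_i^\star)$, and splitting the gradient difference into a \emph{point} change (same data $\tbd_i$, evaluation moved from $(\tbz_i^\star,\tblambda_i^\star)$ to $(\tbz_i^0,\tblambda_i^0)$) plus a \emph{data} change (same point, reference moved from $\tbd_i^\star$ to $\tbd_i$), and using that the primal–dual Jacobian of $\nabla\mL^i$ is $K^i$ itself, I obtain
\begin{align*}
\begin{pmatrix}\tbz_i^1-\tbz_i^\star\\[1pt]\tblambda_i^1-\tblambda_i^\star\end{pmatrix}
&=(K^i)^{-1}\Big[K^i-\int_0^1 K^i_\tau\,d\tau\Big]\begin{pmatrix}\tbz_i^0-\tbz_i^\star\\[1pt]\tblambda_i^0-\tblambda_i^\star\end{pmatrix}\\
&\quad-(K^i)^{-1}\,\delta_i ,
\end{align*}
where $K^i_\tau$ denotes $K^i$ evaluated along the segment joining the two primal–dual points (which stays in the convex cube neighborhood $\N_\epsilon(\tbz_i^\star,\tblambda_i^\star)$), and $\delta_i=\nabla\mL^i(\tbz_i^\star,\tblambda_i^\star;\tbd_i)-\nabla\mL^i(\tbz_i^\star,\tblambda_i^\star;\tbd_i^\star)$.

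I would then bound the two terms separately. The first is the algorithmic error: the bracketed matrix is a difference of KKT matrices at two points of $\N_\epsilon(\tbz_i^\star,\tblambda_i^\star)$ whose stage-$k$ discrepancy is at most $\Psi_{k,i}^0$; since $H^i$ is block diagonal and $G^i$ couples only neighbouring stages, Assumption~\ref{ass:M:Lip:cond} makes it \emph{banded} with stage-$j$ block weight $O(\Upsilon_L\Psi_{j,i}^0)$. Multiplying into the error vector (stage-$j$ block of norm $\le\Psi_{j,i}^0$) and then by $(K^i)^{-1}$, whose $(k,j)$ block decays like $\Upsilon_K\rho^{|k-j|}$ by Corollary~\ref{cor:2}, the stage-$k$ block is bounded by a sum over the $O(1)$-width band of $\rho^{|k-j|}\Psi_{j,i}^0\Psi_{j',i}^0$; absorbing the $O(1)$ index shifts into a factor $\rho^{-1}$ and using $\Psi_j\Psi_{j'}\le\tfrac12(\Psi_j^2+\Psi_{j'}^2)$ for $|j-j'|\le1$ collapses this to $O\big(\sum_j\rho^{|k-j|}(\Psi_{j,i}^0)^2\big)$. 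The second term, $-(K^i)^{-1}\delta_i$, gives the perturbation error: $\tbd_i$ and $\tbd_i^\star$ differ only in the initial state and in the terminal triple $(\bz^0_{n_2(i)},\blambda^0_{n_2(i)})$, so $\delta_i$ is supported on just two stage blocks — at the initial block it equals $-(\bx^0_{n_1(i),i}-\tx_{n_1(i)})$ (the sole appearance of $\bbx_{n_1(i)}$ in $\mL^i$), and at the terminal block it is a difference of terminal gradients which Assumptions~\ref{ass:M:Ubound}–\ref{ass:M:Lip:cond} together with the $\mu(\bx_{n_2(i)}-\bx^0_{n_2(i)})$ term bound by $O(\epsilon)$ because $(\bz^0_{n_2(i)},\blambda^0_{n_2(i)})\in\N_\epsilon(\tz_{n_2(i)},\tlambda_{n_2(i)})$. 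Applying $(K^i)^{-1}$ and reading the decay rates off Corollary~\ref{cor:2} ($\rho^{k-n_1(i)}$ toward the initial block, $\rho^{n_2(i)-k}$ toward the terminal block) reproduces $\Upsilon_C(\rho^{k-n_1(i)}\|\bx^0_{n_1(i),i}-\tx_{n_1(i)}\|+\rho^{n_2(i)-k}\epsilon)$; for $i=T$ the terminal block of $\delta_i$ is absent and the bound still holds. Adding the two contributions yields \eqref{equ:error}.

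I expect the main obstacle to be the bookkeeping in the algorithmic-error estimate: one must verify that the difference of KKT matrices is genuinely banded with stage-wise weight $\lesssim\Psi_{j,i}^0$ (tracking precisely which block rows/columns of $H^i$ and $G^i$ a stage-$k$ perturbation touches), and then convolve this band against the $\rho^{|k-j|}$-decaying inverse of Corollary~\ref{cor:2} without degrading the exponential profile — i.e.\ showing the index shifts from the bandwidth and the cross terms $\Psi_j\Psi_{j'}$ only cost a constant factor, so that $\Upsilon_C$ stays independent of $N$. A secondary technical point is the careful evaluation of the terminal component of $\delta_i$, where the (uniformly fixed, by Theorem~\ref{thm:1}) parameter $\mu$ enlarges the constant but not its $N$-dependence.
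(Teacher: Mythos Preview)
Your proposal is correct and follows essentially the same route as the paper: the paper also writes the Newton error as $(K^i)^{-1}$ acting on the sum of an integral remainder $\int_0^1(K^i-K^i_\eta)\,d\eta$ applied to the primal--dual error (the algorithmic term) and the residual $\nabla\mL^i(\tbz_i^\star,\tblambda_i^\star;\tbd_i)$ (your $\delta_i$), which it shows is supported only at the initial dual block and the terminal primal block before invoking Corollary~\ref{cor:2}. One simplification you can exploit: the difference $K^i-K^i_\eta$ is in fact block-\emph{diagonal} in the stage index (the identity blocks of $G^i$ cancel), so no cross terms $\Psi_{j}\Psi_{j'}$ with $j\ne j'$ arise and the algorithmic bound follows directly without the AM--GM/index-shift bookkeeping you anticipated.
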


\begin{proof}
See  Appendix \ref{pf:thm:2}.
\end{proof}

We ignore the error recursion of $\|\blambda_{n_1(i)-1, i}^\Id - \tlambda_{n_1(i)-1}\|$ although it is similar to \eqref{equ:error}, because $T_{n_1(i) - 1} = i - 1$ (cf. Definition \ref{def:sTk}); that is,  the output error for stage $n_1(i) - 1$ is provided by the $(i-1)$-th subproblem. We see from \eqref{equ:error} that the updated error, $\Psi_{\cdot, i}^1$, consists of two components: algorithmic error and perturbation error. The algorithmic error comes from the Newton iteration, which enjoys the customary quadratic convergence, whereas the perturbation error occurs because the data needed at either end of the horizon is not the solution of \eqref{pro:1}. On the other hand, the exponential decay of the sensitivity of the solution of \eqref{pro:1}, proved in \cite{Na2020Exponential}, ensures that the perturbation effect of the horizon truncation decays exponentially from both sides. This explains our strategy of discarding the receding-horizon endpoint estimates between iterations and indicates that the horizon truncation effects will be negligible in the middle stages.

\begin{remark}\label{rem:3}
Two extreme cases  correspond to the one without perturbation error and the one without algorithmic error, respectively. First, without doing any horizon truncation, one can show that
\begin{align*}
\Psi_k^1\leq \Upsilon_C\sum_{j=0}^N\rho^{|j-k|}(\Psi_j^0)^2, \text{\ \ } \forall k\in[N],
\end{align*}
with $\Psi_k^\Id = \|(\bz_k^\Id - \tz_k; \blambda_k^\Id - \tlambda_k)\|$ for $k\in[N-1]$ and $\Psi_N^\Id = \|\bx_N^\Id - \tx_N\|$. Second, if Problem \eqref{pro:1} is linear-quadratic with $\{g_k\}$ being quadratic and $\{f_k\}$ being linear, then one can show that (since one-step Newton attains the optimal)
\begin{align}\label{equ:LQerror}
\Psi_{k, i}^1\leq \Upsilon_C\big(\rho^{k - n_1(i)}\|\bx_{n_1(i), i}^0 - \tx_{n_1(i)}\| + \rho^{n_2(i) - k}\epsilon\big).
\end{align} 
Using this  inequality, we can further extend the results in \cite{Xu2019Exponentially}. A detailed discussion is presented in the next subsection.
\end{remark}

Returning to \eqref{equ:error}, we observe that we cannot provably  guarantee that the error on the boundary gets improved by doing a one-step Newton iteration. Specifically, if $k$ is close to either $n_1(i)$ or $n_2(i)$, then $\Psi_{k, i}^1 \nleq \Psi_{k, i}^0$. Therefore, using certain lag $L$ is critical for Algorithm \ref{alg:1} coupled with the strategy described in Remark \ref{rem:2}.

The next theorem characterizes the stability of this lag-$L$ MPC strategy.

\begin{theorem}[Stability]\label{thm:3}
Under the setup of Theorem \ref{thm:2}, suppose $L$ and $\epsilon$ satisfy
\begin{align}\label{equ:cond:rhoeps}
\rho^L\leq \epsilon \leq \frac{1-\rho}{\Upsilon_C(5+\rho)} \coloneqq \frac{1}{\kappa}.
\end{align}
Then $\forall i\in[1, T-1]$
\begin{align*}
(\tbz_i^0, \tblambda_i^0)\in\N_\epsilon(\tbz_i^\star, \tblambda_i^\star) \Longrightarrow (\tbz_{i+1}^0, \tblambda_{i+1}^0)\in\N_\epsilon(\tbz_{i+1}^\star, \tblambda_{i+1}^\star).
\end{align*}	
\end{theorem}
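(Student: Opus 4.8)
The plan is to prove the stated one-step implication directly, for an arbitrary fixed $i\in[1,T-1]$. So assume $(\tbz_i^0,\tblambda_i^0)\in\N_\epsilon(\tbz_i^\star,\tblambda_i^\star)$. First I would certify that the $i$-th Newton step \eqref{equ:Newton}--\eqref{equ:Update} is well posed: by Theorem~\ref{thm:1} (for $i\le T-1$) together with Corollaries~\ref{cor:1} and~\ref{cor:2}, the KKT matrix $K^i$ at $(\tbz_i^0,\tblambda_i^0;\tbd_i)$ is nonsingular, so the hypotheses of Theorem~\ref{thm:2} hold and the one-step recursion \eqref{equ:error} is in force for subproblem $i$. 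From the membership hypothesis I would also record $\Psi_{j,i}^0\le c_0\epsilon$ for every stage $j$ of subproblem $i$ (with $c_0$ an absolute constant relating the max-over-blocks norm defining the $\epsilon$-cube to the Euclidean $\Psi$-norm), and in particular $\|\bx_{n_1(i),i}^0-\tx_{n_1(i)}\|\le\epsilon$.

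Second, I would split the candidate starting point $(\tbz_{i+1}^0,\tblambda_{i+1}^0)$ stage by stage according to the transitions \eqref{equ:trans1}--\eqref{equ:trans2}. The stages $k\in(n_2(i+1)-2L,\,n_2(i+1)]$ are reset to the fixed guess $(\bz^0,\blambda^0)$ (with $\bx_{n_2(i+1),i+1}^0=\bx_{n_2(i+1)}^0$), which lies in $\N_\epsilon(\tz,\tlambda)$ by assumption; these are already in the correct $\epsilon$-cubes and need no work. The remaining ``carried-over'' stages $k\in[n_1(i+1)-1,\,n_2(i+1)-2L]$ are copies of the post-Newton iterate of subproblem $i$, so their discrepancy from $(\tz_k,\tlambda_k)$ is exactly $\Psi_{k,i}^1$, and it remains to bound $\Psi_{k,i}^1\le\epsilon$ for those $k$.

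Third comes the crux, the arithmetic of the lag-$L$ buffer. Since $M=SL$ we have $n_1(i+1)=n_1(i)+L$, and $n_2(i+1)\le n_2(i)+L$ in all cases (equality unless the last horizon is truncated), hence $n_2(i+1)-2L\le n_2(i)-L$. Thus every carried-over stage $k$ obeys $k-n_1(i)\ge L-1$ and $n_2(i)-k\ge L$, so in \eqref{equ:error} the perturbation factors $\rho^{\,k-n_1(i)}$ and $\rho^{\,n_2(i)-k}$ are each at most $\rho^L\le\epsilon$ by the left inequality in \eqref{equ:cond:rhoeps}; the one borderline index is the leftmost multiplier $k=n_1(i+1)-1$, whose sensitivity to the perturbed initial state is controlled --- via the block bounds of Corollary~\ref{cor:2} that underlie Theorem~\ref{thm:2} --- by $\rho^{\,(k+1)-n_1(i)}=\rho^L$, again $\le\epsilon$; and by the discussion immediately after Theorem~\ref{thm:2} this multiplier feeds neither the data of subproblem $i+1$ nor the final output. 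Plugging $\Psi_{j,i}^0\le c_0\epsilon$ and the geometric estimate $\sum_{j}\rho^{|k-j|}\le\frac{1+\rho}{1-\rho}$ into \eqref{equ:error} gives
\[
\Psi_{k,i}^1\ \le\ \Upsilon_C\epsilon^2\Bigl(c_0^2\,\frac{1+\rho}{1-\rho}+2\Bigr),
\]
and the right inequality in \eqref{equ:cond:rhoeps}, namely $\epsilon\le\frac{1-\rho}{16\Upsilon_C(1+\rho)}$, converts this into $\Psi_{k,i}^1\le\frac{\epsilon}{16}\bigl(c_0^2+\frac{2(1-\rho)}{1+\rho}\bigr)\le\epsilon$; the constant $16$ is sized precisely for this slack. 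Together with the reset stages this yields $(\tbz_{i+1}^0,\tblambda_{i+1}^0)\in\N_\epsilon(\tbz_{i+1}^\star,\tblambda_{i+1}^\star)$.

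I expect the main obstacle to be bookkeeping rather than estimation: aligning the index ranges produced by the ``discard-the-tail'' transitions \eqref{equ:trans1}--\eqref{equ:trans2} with the exponents in \eqref{equ:error}, so that the width-$L$ buffer genuinely produces a $\rho^L$ (and not merely $\rho^{L-1}$) decay on every carried-over primal, dual, and multiplier component --- the leftmost multiplier being the case that forces one to use the sharper $\rho^{\,\cdot+1-n_1(i)}$ form of the inverse-block decay from Corollary~\ref{cor:2} --- and then checking that the absolute constant $16$ in \eqref{equ:cond:rhoeps} is large enough to absorb $c_0^2$ and $\frac{1+\rho}{1-\rho}$.
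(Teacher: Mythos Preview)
Your proposal is correct and follows essentially the same route as the paper: split the next horizon's initial point into reset stages (trivially in the cube) and carried-over stages, then bound the latter via the one-step recursion \eqref{equ:error} using the $\rho^L$ decay from the width-$L$ buffer, the geometric sum $\sum_j\rho^{|k-j|}\le\frac{1+\rho}{1-\rho}$, and the two-sided bound \eqref{equ:cond:rhoeps}. The paper makes the norm-conversion constant explicit ($c_0=\sqrt{3}$, so $(\Psi_{j,i}^0)^2\le 3\epsilon^2$) and arrives at $\Psi_{k,i}^1\le \frac{\Upsilon_C(5+\rho)}{1-\rho}\epsilon^2\le\epsilon$.

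One small point: your worry about the leftmost multiplier $\blambda_{n_1(i+1)-1}$ (with exponent $L-1$ rather than $L$) is handled in the paper by convention rather than by the sharper decay in Corollary~\ref{cor:2}. As stated after Definition~\ref{def:2} (and reiterated before Assumption~\ref{ass:M:SOSC}), the leading multiplier of a (sub)problem plays the role of $\blambda_{-1}$ and does not enter $H^i$, $G^i$, or any of the assumptions; accordingly the paper's $\epsilon$-cube for subproblem $i{+}1$ constrains only $\blambda_{n_1(i+1):n_2(i+1)-1}$, so the carried-over range is $[n_1(i)+L,\,n_2(i)-L]$ and every exponent is $\ge L$ without a borderline case.
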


\begin{proof}
See Appendix \ref{pf:thm:3}.	
\end{proof}

The quantity $\kappa$ in \eqref{equ:cond:rhoeps} can be interpreted as the condition number of the problem. It is fully characterized by the~quantities of the problem. The condition \eqref{equ:cond:rhoeps} also indicates that, in practice, the optimal lag choice should satisfy $\rho^L\asymp 1/\kappa$, i.e. $L \asymp \log \kappa/\log(1/\rho)$.

\begin{remark}\label{rem:5}
At a first glance, the condition on lag in \eqref{equ:cond:rhoeps} seems counter-intuitive, in the sense that the results in \cite{Zavala2010Real, Dinh2012Adjoint, Zanelli2021Lyapunov} all require the sampling time to be sufficiently small and standard MPC regimes \cite{Diehl2005Nominal, Wang2010Fast} all use lag $1$ instead. However, those results are for continuous data, and do not apply to our discrete-time time-varying problems, where we only have uniform bounded data. As emphasized before, the continuity breaks in our case since even two successive subproblems $\P_i(\tbd_i^\star)$ and $\P_{i+1}(\tbd_{i+1}^\star)$ may have large solutions distance. The condition on lag $L$ is the price we pay for requiring only bounded data, in contrast to continuity combined with a sufficiently small sampling time. 

\end{remark}

Using the stability property in Theorem \ref{thm:3} and combining with uniform positive definiteness of the reduced Hessian guarantee in Theorem \ref{thm:1}, we can  show that  Algorithm \ref{alg:1} can be carried out successfully, in the sense that the series of subproblems are well defined and satisfy the same error recursion rule.

\begin{theorem}\label{thm:4}
Consider using Algorithm \ref{alg:1} to solve \eqref{pro:1}. Suppose Assumptions \ref{ass:M:SOSC}, \ref{ass:M:control}, \ref{ass:M:Ubound}, \ref{ass:M:Lip:cond} and conditions on $\mu$, $L$, $\epsilon$ in \eqref{equ:set:mu} and \eqref{equ:cond:rhoeps} hold. Then, for any $i\in[1, T]$, the KKT matrix in \eqref{equ:Newton} is nonsingular, and the error recursion in \eqref{equ:error} holds as well.
\end{theorem}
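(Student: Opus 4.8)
The plan is to prove the statement by induction on the subproblem index $i$, where the inductive hypothesis is the single "admissibility" assertion
$(\tbz_i^0, \tblambda_i^0)\in\N_\epsilon(\tbz_i^\star,\tblambda_i^\star)$ — precisely the precondition shared by Theorems \ref{thm:1}, \ref{thm:2}, and \ref{thm:3} and by Corollaries \ref{cor:1}, \ref{cor:2}. Once this hypothesis holds for a given $i$, both conclusions of the theorem for that $i$ are immediate from results already in hand: Theorem \ref{thm:1} (if $i\le T-1$) or Corollary \ref{cor:1} (if $i=T$) gives $ReH^i(\tbz_i^0,\tblambda_i^0;\tbd_i)\succeq\gamma_H I$; since $G^i$ has full row rank by its staircase structure in Definition \ref{def:4}, LICQ holds automatically, so Lemma 16.1 in \cite{Nocedal2006Numerical} makes the KKT matrix in \eqref{equ:Newton} nonsingular, hence \eqref{equ:Newton}–\eqref{equ:Update} are well posed and produce $(\tbz_i^1,\tblambda_i^1)$; and Theorem \ref{thm:2} then delivers the one-step error recursion \eqref{equ:error}. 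So the only genuine work is propagating the admissibility hypothesis, which is exactly what Theorem \ref{thm:3} was built to do.

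For the base case $i=1$, note that $\tbz_1^0=(\bbx_0;\bu_0^0;\bz_1^0;\ldots;\bz_{M-1}^0;\bx_M^0)$ and $\tblambda_1^0=\blambda_{-1:M-1}^0$ are assembled entirely from coordinates of the fixed initial guess $(\bz^0,\blambda^0)\in\N_\epsilon(\tz,\tlambda)$, except for the leading block $\bbx_0=\bd_{-1}$, which equals the true initial state $\tx_0$ exactly. Hence every block lies within $\epsilon$ of the corresponding block of $(\tbz_1^\star,\tblambda_1^\star)$ (the first one trivially), so $(\tbz_1^0,\tblambda_1^0)\in\N_\epsilon(\tbz_1^\star,\tblambda_1^\star)$. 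For the inductive step, fix $i\in[1,T-1]$ and assume $(\tbz_i^0,\tblambda_i^0)\in\N_\epsilon(\tbz_i^\star,\tblambda_i^\star)$. Carry out the three stages in order: (1) admissibility of $i$ gives SOSC for subproblem $i$ via Theorem \ref{thm:1}, hence nonsingularity of the $i$-th KKT matrix and a well-defined Newton update; (2) Theorem \ref{thm:2} gives the error recursion \eqref{equ:error} for subproblem $i$; (3) Theorem \ref{thm:3}, whose hypotheses on $\mu,L,\epsilon$ are exactly our standing assumptions \eqref{equ:set:mu} and \eqref{equ:cond:rhoeps}, turns admissibility of $i$ into admissibility of $i+1$. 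This closes the induction on $[1,T]$, and feeding each admissible $i$ back through stage (1)–(2) above (with Corollary \ref{cor:1} replacing Theorem \ref{thm:1} when $i=T$) yields the stated conclusions for every $i$.

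There is no real obstacle here — the content of Theorem \ref{thm:4} is the stitching, and all the analytic substance has already been discharged in Theorems \ref{thm:1}–\ref{thm:3} and Corollaries \ref{cor:1}–\ref{cor:2}. The one point that needs care is the bookkeeping at the interface between the "output–input" transition \eqref{equ:trans1}–\eqref{equ:trans2} and Theorem \ref{thm:3}: one must confirm that the overlap block $[n_1(i+1),n_2(i+1)-2L]$ coincides with $[n_1(i)+L,\,n_2(i)-L]$, that on that block $\tbz_{i+1}^0=\tbz_i^1$ (so the $\epsilon$-bound there comes from stage (2) above via $\rho^L\le\epsilon$), and that on the freshly exposed tail $(n_2(i)-L,n_2(i+1)]$ the iterate is re-initialized from $(\bz^0,\blambda^0)$ and hence within $\epsilon$ of the truncated solution by definition. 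This is precisely what the proof of Theorem \ref{thm:3} already verifies, so nothing new is required; the remaining subtlety is merely to remember that the final horizon $i=T$ may be shorter than $M$, in which case the terminal modifications in \eqref{pro:2} are dropped and the SOSC step is supplied by Corollary \ref{cor:1} rather than Theorem \ref{thm:1}, with no further propagation needed.
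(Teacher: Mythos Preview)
Your proposal is correct and follows essentially the same route as the paper's proof: an induction on $i$ that iteratively applies Theorem~\ref{thm:1} (or Corollary~\ref{cor:1} when $i=T$), Theorem~\ref{thm:2}, and Theorem~\ref{thm:3} to propagate the admissibility hypothesis $(\tbz_i^0,\tblambda_i^0)\in\N_\epsilon(\tbz_i^\star,\tblambda_i^\star)$. Your write-up is in fact slightly more careful than the paper's, since you explicitly separate the $i=T$ case and spell out the LICQ/SOSC route to nonsingularity, but the argument is the same.
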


\begin{proof}
See Appendix \ref{pf:thm:4}.
\end{proof}

Up to this point we have established the one-step error recursion and the stability of Algorithm \ref{alg:1}. Based on these results, we can now state our local convergence rate result.

\subsection{Local convergence rate}\label{sec:4.3}

In this subsection we study the local convergence rate for each stage. We note that some adjacent stages should have similar rates. For example, stages in $[0, L)$ are iterated only once in the first subproblem, hence we can group them together to analyze. Similarly, stages in $[L, 2L)$ are iterated twice in the first two subproblems, and they can be grouped together. We formalize this inner grouping structure in the following definition.

\begin{definition}
For any $s\in[S-1]$, we group stages by $O_s = \{k\in[N]: s_k = s\}$, where $s_k = s(k, T_k)$ with $s(\cdot, \cdot)$, $T_k$ defined in Definition \ref{def:sTk}. Further, for any $i\in[1, T]$, we group stages within subproblem $i$ by $O^i_s = \{k\in[n_1(i), n_2(i)]: s(k, i) = s\}$. We  let $\Omega_s = \max_{i\in[1, T]}\max_{k\in O^i_s}\Psi_{k, i}^0$.
\end{definition}

We show how $\{\Omega_s\}_{s\in[S-1]}$ characterize the groups in Figure \ref{fig:4}. By our output setup in \eqref{equ:output}, the error at stage~$k$ is given by $\Omega_{s_k}$, that is for any stages $k\in O_s$ their errors are characterized by $\Omega_s$. Thus, in what follows, we focus only on $\{\Omega_s\}_{s\in[S-1]}$; in particular, we are more interested in $\Omega_{S-1}$.

\begin{figure}[!tp]
	\centering     
	\includegraphics[width=8.7cm,height=3cm]{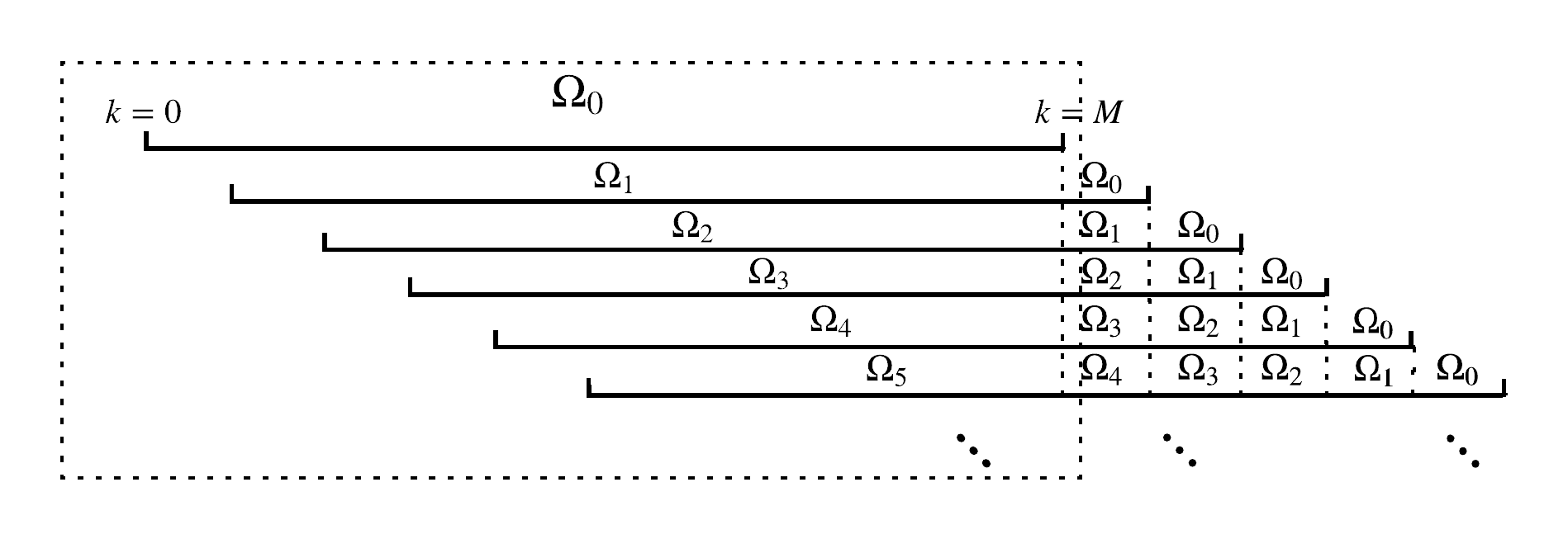}
	\caption{\textit{Stages in  groups.}}\label{fig:4}
\end{figure}

\begin{theorem}[Local convergence rate]\label{thm:5}
Under the setup in Theorem \ref{thm:4}, we have
\begin{align*}
\Omega_s\leq \begin{cases}
3\epsilon & \text{\ \ if\ } s = 0, 1,\\
3\kappa\epsilon^2 & \text{\ \ if\ } s = 2, \\
3\kappa^{s-2}\epsilon^{s-1} &\text{\ \ if\ } s \geq 3.
\end{cases}
\end{align*}

\end{theorem}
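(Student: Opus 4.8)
The plan is to induct on $s$, using the one-step error recursion \eqref{equ:error} from Theorem~\ref{thm:2} together with the geometric-sum bound \eqref{pequ:24} already established in the proof of Theorem~\ref{thm:3}. The base cases $s=0,1$ follow directly from stability: by Theorem~\ref{thm:4} every iterate stays in $\N_\epsilon(\tbz_i^\star,\tblambda_i^\star)$, so $\Psi_{k,i}^0\le\epsilon$ for all $k,i$ (the factor $3$ is slack we will not need yet). The heart of the argument is the inductive step: for a stage $k\in O_s^i$ with $s\ge 2$, the stage was already part of $s$ earlier receding horizons, so in particular it was interior (away from $n_1(i)$) in the \emph{previous} subproblem and got updated there; we want to feed the bound $\Omega_{s-1}$ on that previous iterate into \eqref{equ:error}.

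First I would make precise the geometry: if $k\in O_s^i$ with $s\ge 1$, then $k\in[n_1(i)+sL,\,\ldots)$ roughly, so $k-n_1(i)\ge sL\ge L$, hence $\rho^{k-n_1(i)}\le\rho^L\le\epsilon$ by \eqref{equ:cond:rhoeps}. This kills the left perturbation term in \eqref{equ:error}: $\rho^{k-n_1(i)}\|\bx_{n_1(i),i}^0-\tx_{n_1(i)}\|\le\rho^L\cdot\epsilon\le\epsilon^2$. The right perturbation term $\rho^{n_2(i)-k}\epsilon$ is the one we cannot control for a general stage — but here the ``stop early'' output rule rescues us: for the \emph{output} we use subproblem $T_k$, and $s_k=s(k,T_k)$ means $k$ is still interior on the relevant side, or more carefully we invoke that the relevant iterate $\bz_{k,i}^0$ for $i=T_k$ was produced as $\bz_{k,i-1}^1$ with $k\le n_2(i-1)-2L$, i.e. $n_2(i-1)-k\ge 2L$, so the right perturbation term from the \emph{producing} subproblem $i-1$ is at most $\rho^{2L}\epsilon\le\epsilon^3$, negligible. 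Thus for $s\ge 2$ both perturbation contributions are $O(\epsilon^2)$ or smaller, and the recursion essentially reduces to the algorithmic (quadratic) part.

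Then the recursion becomes, schematically, $\Omega_s\lesssim\Upsilon_C\big(\sum_j\rho^{|k-j|}(\Psi_{j,\cdot}^0)^2\big)+O(\epsilon^2)$, where the stages $j$ summed over that are ``close'' to $k$ inherit the bound $\Omega_{s-1}$ (they were scanned at least $s-1$ times), while the far-away ones contribute $\rho^{|k-j|}\epsilon^2$ and sum geometrically to $O(\epsilon^2)$. Using $\sum_j\rho^{|k-j|}\le(1+\rho)/(1-\rho)$ from \eqref{pequ:24} and the definition $\kappa=16\Upsilon_C(1+\rho)/(1-\rho)$, one gets a clean bound $\Omega_s\le c_1\Omega_{s-1}^2+c_2\epsilon^2$ with $c_1\asymp\kappa$. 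Feeding in $\Omega_1\le 3\epsilon$ gives $\Omega_2\lesssim\kappa\epsilon^2$; then $\Omega_3\lesssim\kappa(\kappa\epsilon^2)^2=\kappa^3\epsilon^4$ — wait, this would be faster than claimed, so the claimed $3\kappa^{s-2}\epsilon^{s-1}$ must come from the \emph{additive} $\epsilon^2$ term dominating the \emph{squared} term once $\Omega_{s-1}$ is already small. Indeed once $\Omega_{s-1}\le\epsilon$, we have $\Omega_{s-1}^2\le\epsilon\,\Omega_{s-1}$, so $\Omega_s\lesssim\kappa\epsilon\,\Omega_{s-1}+\epsilon^2$; iterating $\Omega_s\lesssim(\kappa\epsilon)^{s-1}\epsilon+\epsilon^2\asymp\kappa^{s-2}\epsilon^{s-1}$ (using $\kappa\epsilon\le 1$, so the $\epsilon^2$ term is subsumed for $s\ge 3$ up to the constant $3$). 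The bookkeeping of the constant $3$ through all three regimes, and carefully arguing which stages $j$ in the sum really do carry the $\Omega_{s-1}$ bound versus only the cruder $\epsilon$ bound near the horizon ends, is the main obstacle; but since the ``bad'' stages near the ends are exponentially suppressed by $\rho^{|k-j|}$, a uniform $\epsilon^2$ bound on their contribution suffices and the recursion closes.
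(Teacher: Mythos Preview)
Your overall architecture---induct on $s$ using the recursion \eqref{equ:error}, control perturbation terms via $\rho^L\le\epsilon$, and isolate a ``close'' piece carrying the inductive bound---matches the paper's. But there is a genuine gap in the way you handle the algorithmic-error sum, and it breaks the induction for $s\ge 4$.

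The problem is your treatment of the ``far-away'' stages. You bound every stage $j$ outside $O_{s-1}^{i-1}$ by the crude $\Psi_{j,i-1}^0\le 3\epsilon$ and claim the resulting contribution is $O(\epsilon^2)$ because of the geometric weight $\rho^{|k-j|}$. But the stages in the \emph{adjacent} group $O_{s-2}^{i-1}$ are not exponentially far from $k$: when $k$ sits at the right edge of $O_{s-1}^{i-1}$ (which is allowed), the nearest $j\in O_{s-2}^{i-1}$ has $|k-j|=1$, so that block contributes $\sim\frac{\rho}{1-\rho}(3\epsilon)^2$ with no $\rho^L$ suppression. Your recursion is therefore genuinely $\Omega_s\lesssim\kappa\Omega_{s-1}^2+C\epsilon^2$ with a \emph{non-decaying} additive term. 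Iterating $\Omega_s\le\kappa\epsilon\,\Omega_{s-1}+C\epsilon^2$ gives $\Omega_s\le(\kappa\epsilon)^{s-1}\cdot 3\epsilon+\frac{C\epsilon^2}{1-\kappa\epsilon}$, which stalls at order $\epsilon^2$. Your closing claim that ``the $\epsilon^2$ term is subsumed for $s\ge 3$'' is false: $\epsilon^2\le 3\kappa^{s-2}\epsilon^{s-1}$ would require $\kappa^{s-2}\epsilon^{s-3}\ge 1/3$, which fails for $s\ge 4$ once $\epsilon$ is small (the hypotheses only give $\kappa\epsilon\le 1$). The same issue hits your left-perturbation bound: you use $\rho^L\cdot\epsilon\le\epsilon^2$, but for $s\ge 4$ you need the sharper $\rho^L\cdot\Omega_{s-1}\le\epsilon\,\bardelta_{s-1}$.

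The paper's fix is exactly to keep the full hierarchy of inductive bounds on the intermediate groups: stages in $O_{s-1-h}^{i-1}$ are bounded by $\bardelta_{s-1-h}$, not by $\epsilon$, and their geometric weight is $\rho^{(h-1)L}/(1-\rho)$. The resulting sum $(\bardelta_{s-2})^2+\rho^L(\bardelta_{s-3})^2+\cdots+\rho^{(s-3)L}(\bardelta_1)^2$ does not simplify to $O(\epsilon^2)$; instead the paper observes that the tail of this sum is exactly $\rho^L$ times the corresponding sum inside $\delta_{s-1}$, yielding after telescoping
\[
\delta_s\ \le\ \frac{4\Upsilon_C(1+\rho)}{1-\rho}\big(\epsilon\,\bardelta_{s-1}+(\bardelta_{s-2})^2\big).
\]
Both pieces now decay in $s$ (one checks $(\bardelta_{s-2})^2\le 3\epsilon\,\bardelta_{s-1}$ for $s\ge 5$, with $s=3,4$ done by hand), and the induction closes at $\bardelta_s=3\kappa^{s-2}\epsilon^{s-1}$. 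So the missing ingredient is: do not collapse the intermediate groups to $\epsilon$; carry their individual bounds and telescope against $\delta_{s-1}$.
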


\noindent{\bf \textit{Proof idea}}. We only need analyze the first $S$ periods, more specifically, the dashed part in Figure \ref{fig:4}. If we can find a sequence $\{\delta_s\}_{s\in[S-1]}$ that satisfies two critical properties---(i) it bounds $\Omega_s$ : $\Omega_s\leq \delta_s$, $\forall s\in[S-1]$, and (ii) it is nonincreasing: $\delta_{s-1}\geq \delta_s$, $\forall s\in[1, S-1]$---then this sequence controls the convergence speed of $\{\Omega_s\}$. Moreover, because of the monotonicity of $\{\delta_s\}$, the dashed part can represent the entire horizon. Note that the $(S+1)$-th subproblem can be seen as the $S$-th subproblem starting from the second subproblem.

\begin{proof}
See Appendix \ref{pf:thm:5}.
\end{proof}

We can interpret Theorem \ref{thm:5} from two perspectives. First, when $L\asymp \log \kappa/\log(1/\rho)$, our algorithm achieves the exponential convergence rate with respect to the  number of ``scans"; in other words, the error of each stage $k$ decays exponentially in $s_k$ (see Definition \ref{def:sTk} for $s_k$). This matches the online result in \cite{Diehl2005Real} where the authors solves a sequentially embedded subproblems. Second, we plug in $\rho^L\asymp \epsilon$ and have
\begin{align}\label{equ:super}
\Omega_{S-1} \leq &3\kappa^{S-3}\rho^{L(S-2)} = 3\kappa^{\frac{M}{L}-3}\rho^{M-2L} = \frac{3}{\kappa}(\kappa^{1/L}\rho)^{M-2L} \nonumber\\
\coloneqq &\frac{3}{\kappa}(\rho')^{M-2L}.
\end{align}
Here, $\kappa^{1/L}\rho \coloneqq \rho'<1$ whenever $\epsilon\kappa<1$. Thus, when the initial \textit{guess}, $(\bz^0, \blambda^0)$, is close enough to the true value (radius bounded by reciprocal of condition number), our MPC strategy converges exponentially in terms of the receding-horizon length for all middle stages that have been scanned the most and are the vast majority among all stages in $[0, N]$. This result matches the results in \cite{Xu2019Exponentially} for the linear-quadratic problem.

\begin{remark}[Comparison with the linear-quadratic problem; balance between algorithmic error and perturbation error]
Suppose $\{g_k\}$ are quadratic and $\{f_k\}$ are linear. Then, as mentioned in Remark \ref{rem:3}, the error recursion reduces to \eqref{equ:LQerror}. Based on \eqref{equ:LQerror}, we extend the results in \cite{Xu2019Exponentially} by incorporating the error of the Lagrange multipliers. For general nonlinear functions $\{g_k, f_k\}$, we have a trade-off between the algorithmic error and perturbation error. At the beginning, the algorithmic error is the dominant term since the perturbation occurs only on the tail. Thus, errors for stages in $[0, L)$ are on the order of $\epsilon^2\asymp \rho^{2L}$, instead of $\rho^{M - L}$ in the linear-quadratic case. Since the algorithmic error decreases quadratically, however, after a few steps the perturbation error will dominate. Overall, we achieve an exponential rate with respect to the receding horizon length. 
\end{remark}

\begin{remark}[Result comparison with RTI schemes]\label{rem:result}
Our convergence result in Theorem \ref{thm:5} is stronger than existing RTI results in \cite{Diehl2005Nominal, Zavala2010Real, Dinh2012Adjoint, Wynn2014Convergence}, and the technical tools we use are different. When the first Newton stepsize is small enough, \cite{Diehl2005Nominal} showed the Newton steps contract to a neighborhood of zero (see \cite[Corollary 5.2]{Diehl2005Nominal}), while \cite{Wynn2014Convergence} showed the Newton iterates contract to a neighborhood of the subproblem solution (see Theorem 3). Both \cite{Diehl2005Nominal,Wynn2014Convergence} considered quadratic objectives (or objectives that are lower bounded by quadratic functions). Similarly, \cite{Zavala2010Real, Dinh2012Adjoint} showed that when the sampling time is small enough, the iterates track the solution of the exact MPC policy. The recent work \cite{Zanelli2021Lyapunov} allowed inequality constraints, and proposed a framework for analyzing RTI by constructing a Lyapunov function for combined system-optimizer dynamics. However, all aforementioned works do not establish the explicit convergence rate of the error of~RTI. Here, we consider the error with respect to the full-horizon solution on a per-stage basis: we show that the error decreases with increasing the shift order to a minimum value, which vanishes to zero exponentially in the receding horizon length. In particular, we show that certain stages have much better accuracy than others for online MPC,  which we call a superconvergence phenomenon. We note that the solution of subproblems in our time-varying problems cannot be reasonably regarded as the baseline since it varies with the subproblems' boundary parameters (if boundary parameters are set by the full-horizon solution then the truncated full-horizon solution is also the solution of subproblems). Moreover, the explicit exponential convergence rate in terms of the ``number of scans" is established in Theorem \ref{thm:5}, which is more refined than the results in the preceding literature, whereby all stages get a similar error bound. Furthermore, in  \eqref{equ:super} the dependence of the rate on the receding horizon length is first established for nonlinear problems in this paper and bridges the gap with convex linear-quadratic problems \cite{Xu2019Exponentially}.

In addition, the technical tools of the preceding literature are based on the convergence of the adopted algorithm (either Newton or predictor-corrector) and the continuity of the problem with respect to certain parameters (usually the initial state). As discussed in Remark \ref{rem:5}, we work exclusively with the uniform bounded data, a weaker assumption. Our technical tool is  the sensitivity analysis of \eqref{pro:1}, which induces the exponential decay structure of the KKT inverse (Lemma \ref{lem:2}). As a comparison, results in \cite{Zavala2010Real, Dinh2012Adjoint} apply  general nonlinear programming techniques which do not address per-stage behavior.

\end{remark}

\begin{remark}[Some potential extensions on analysis]\label{rem:6}
One of limitations of our analysis is that we exclude the inequality constraints in \eqref{pro:1}. If the inequalities are on the control only, the analysis is more involved but straightforward provided we assume either strict complementarity or a strengthened version of SOSC, in addition to the assumptions here holding on the active set of the long-horizon problem (see \cite{Xu2018Exponentially} for a flavor of how we think the analysis would look like). For state constraints, the problem is considerably more complicated and the controllability condition needs to be strengthened, even in the convex case \cite{Xu2019Exponentially}. The main idea, however, is to reduce the problem by an active set argument to the case described here, which we believe would work in many cases of interest.
		
A methodological issue is the fact that we exhibit loss of accuracy at the beginning and end of the horizon. For many applications, the end of the horizon is special and we would like to improve convergence in that region even if we need more computations. A possible way to fix this is to combine this work with ideas from \cite{Diehl2005Real}.
\end{remark}

\section{Simulation}\label{sec:5}

In this section, we apply Algorithm \ref{alg:1} on long-horizon dynamic programs to validate the theoretical results we established in {\black Section} \ref{sec:4}. The efficiency and superiority of fast online MPC strategies have been probed in \cite{Wang2010Fast}, where the authors also discussed how to numerically solve Newton's step. We will not go into such details here.

Specifically, we let $n_x = n_u = n_d = 1$ and consider the following problem:
\begin{footnotesize}
\begin{align}\label{pro:3}
\min_{\bx, \bu} &\sum_{k = 0}^{N-1}\big(2\cos(x_k -d_k)^2+ C_1(x_k - d_k)^2 - C_2(u_k - d_k)^2\big) + C_1x_N^2, \nonumber\\
\text{s.t.}\  & x_{k+1} = x_k + u_k + d_k, \text{\ \ \ } \forall k\in[N-1],\\
& x_0 = 0, \nonumber
\end{align}
\end{footnotesize}
\hskip -4pt
that Assumptions \ref{ass:M:SOSC}, \ref{ass:M:control}, \ref{ass:M:Ubound}, and \ref{ass:M:Lip:cond} all hold: the controllability condition is satisfied with $t = 1$ and $\gamma_C = 1$; the uniform boundedness condition is satisfied with $\Upsilon = 1\vee 4 + 2|C_1|\vee 2|C_2|$; the Lipschitz condition is satisfied with $\Upsilon_L = 4$; and the uniform SOSC is also satisfied provided $C_1 - 2>4|C_2|$. In fact, the Hessian of Problem \eqref{pro:3} has two components. One is $\diag(4-4\cos\big(2(x_k - d_k)\big), 0)$, which is positive semidefinite in the whole space; the other is $\diag(2C_1-4, -2C_2)$. As discussed in Remark 3.2 in \cite{Na2020Exponential}, when $2C_1-4>8|C_2| \Rightarrow C_1-2>4|C_2|$, the second component is positive definite in the null space with $\gamma_H = (C_1 - 2 - 4|C_2|)/4$.

\noindent{\bf Simulation setting:} We consider three  setups summarized in Table \ref{tab:1}. As shown in the table, the entire horizon length varies from $5000$ to $40000$. For each $N$, we implement Algorithm \ref{alg:1} four times with different receding-horizon lengths. The full-horizon solution that is used in the computation of the error was obtained by solving \eqref{pro:3} with the JuMP/Julia package and the IPOPT solver. Throughout the experiment the initial point is set as $(\bz^0, \blambda^0) = (0, 0)$, and the parameter $\mu$ is fixed at $10$.

\begin{table}[!htp]
	\centering
	\caption{Simulation setup.}\label{tab:1}
	\begin{tabular}{c|c|c|c|c } 
		\hline
		Cases  & $d_k$ & $N$ &  $M$ & $(L, C_1, C_2, \mu)$  \\	
		\hline
		Case 1 & $1$ & $5000$ &  $[10, 20, 30, 40]$ & $(5, 8, 1, 10)$\\
		\hline
		Case 2 & $5\sin(k)$ & $10000$ & $[30, 40, 50, 60]$ & $(10, 12, 2, 10)$ \\
		\hline
		Case 3 & $10\sin(k)^2$ & $40000$ & $[50, 60, 70, 80]$ & $(10, 40, 5, 10)$ \\
		\hline
	\end{tabular}
	
\end{table}

\noindent{\bf Result summary:} For each case, we present four figures: 
\begin{enumerate}[label=(\alph*),topsep=1pt]
\setlength\itemsep{-0.1em}
\item Solution trajectory: $\hat{x}_k$ v.s. $k$, $\hat{u}_k$ vs. $k$, $\hat{\lambda}_k$ vs. $k$. We expect to see the trajectories obtained by Algorithm~\ref{alg:1} approximate to the true trajectory in the middle stages, although larger discrepancy might be observed at the two ends. To make the figure readable, we  plot only the first $100$ stages.
\item Group error trend: $\log(\max_{k\in[n_1(i), n_1(i+1))}\Psi_{k, T_k}^0)$ v.s. $i$. For any $i \in[1, N/L]$, the stages in $[n_1(i), n_1(i+1))$ are grouped together since they are scanned the same number of times (see the definition of $s_k$ in Definition \ref{def:sTk}). Among $N/L$ groups, the iteration number for the first $S (= M/L)$ groups is increasing by one to $S-1$, while for the last $S$ groups it is decreasing by one to zero. The middle groups are all iterated on $S-1$ times. From this figure, we expect to see that the numerical error converges to constant value for all choices of $M$ in the middle and decrease or increase linearly at the tail ends, as suggested by Theorem \ref{thm:5}.
\item We zoom in Figure 4(b) at the left and right horizon ends to check the linearity (in log plot) with respect to group $i$, which certifies the exponential convergence with respect to the iteration count.
\item Error against receding horizon length: $\log \Omega_{S-1}$ v.s. $M$. We expect to see linear decay in the log plot, as predicted by Theorem~\ref{thm:5}.
	
\end{enumerate}

Simulation results for the three cases are shown in Figure~\ref{fig:case1},~\ref{fig:case2}, and \ref{fig:case3}, respectively. They are all consistent with expectations.

\begin{figure}[!htp]
	\centering     
	\subfigure[]{\label{Case11}\includegraphics[width=43.7mm]{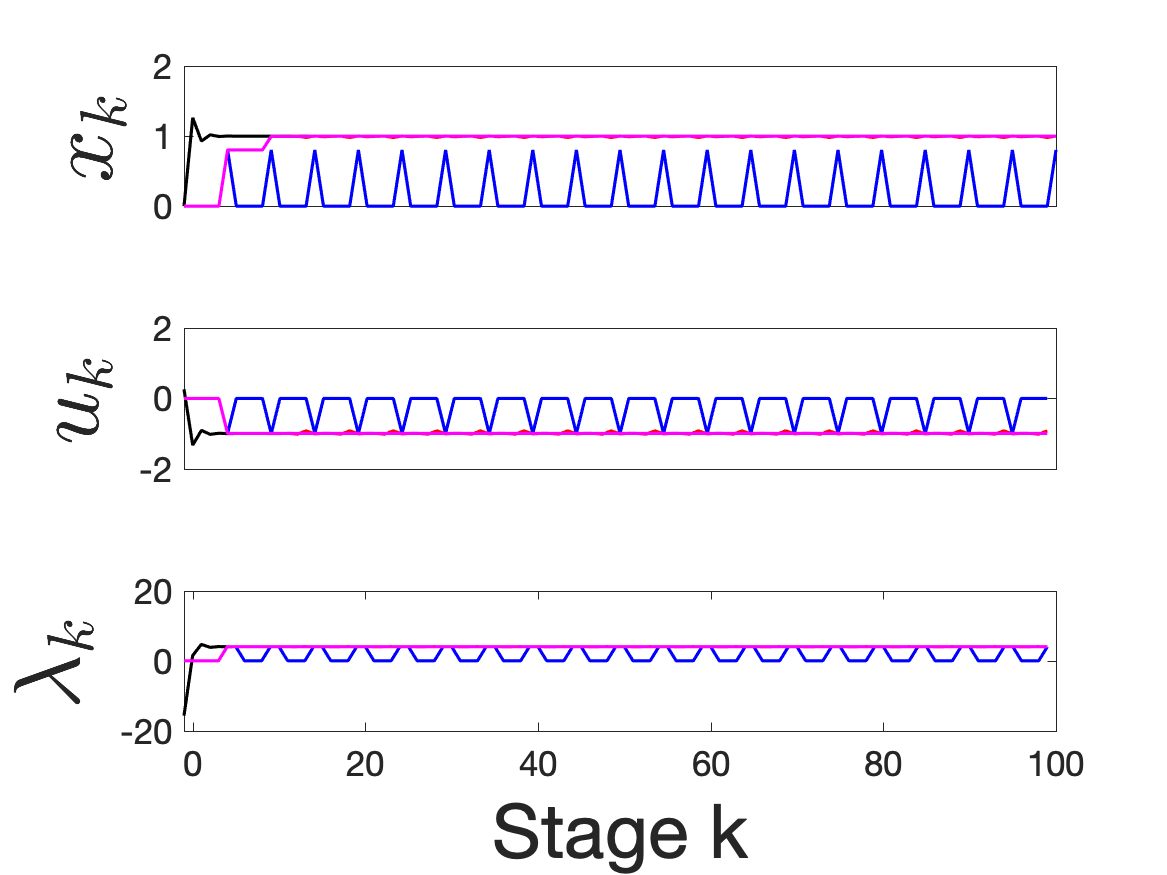}}
	\subfigure[]{\label{Case12}\includegraphics[width=43.7mm]{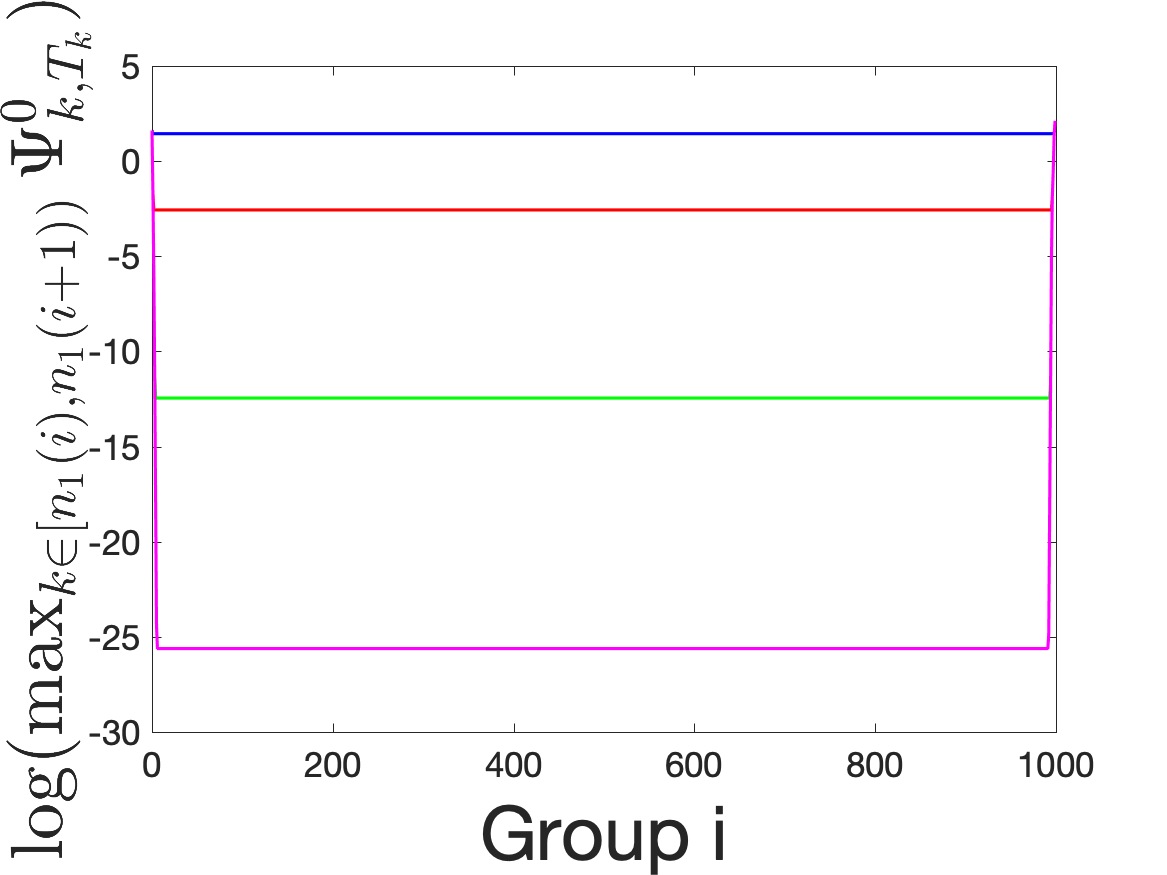}}
	\subfigure[]{\label{Case13}\includegraphics[width=43.7mm]{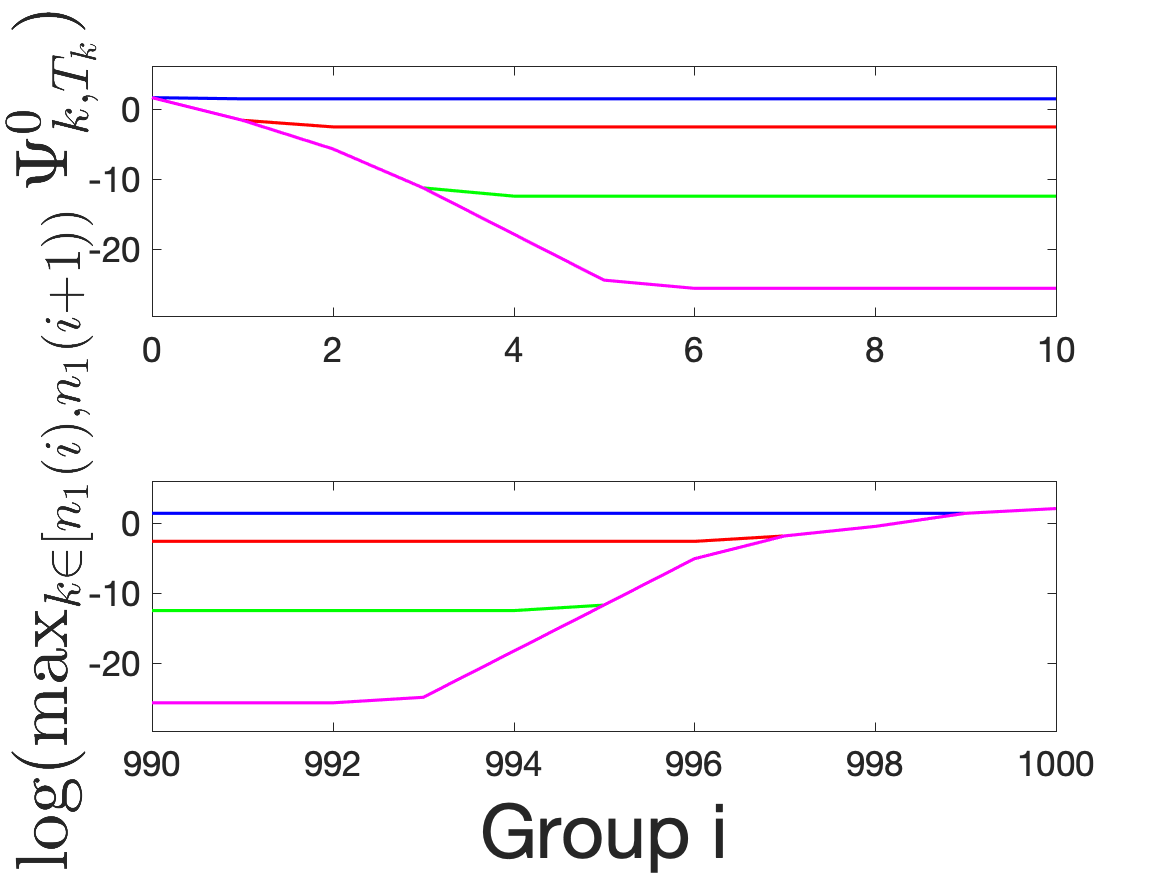}}
	\subfigure[]{\label{Case14}\includegraphics[width=43.7mm]{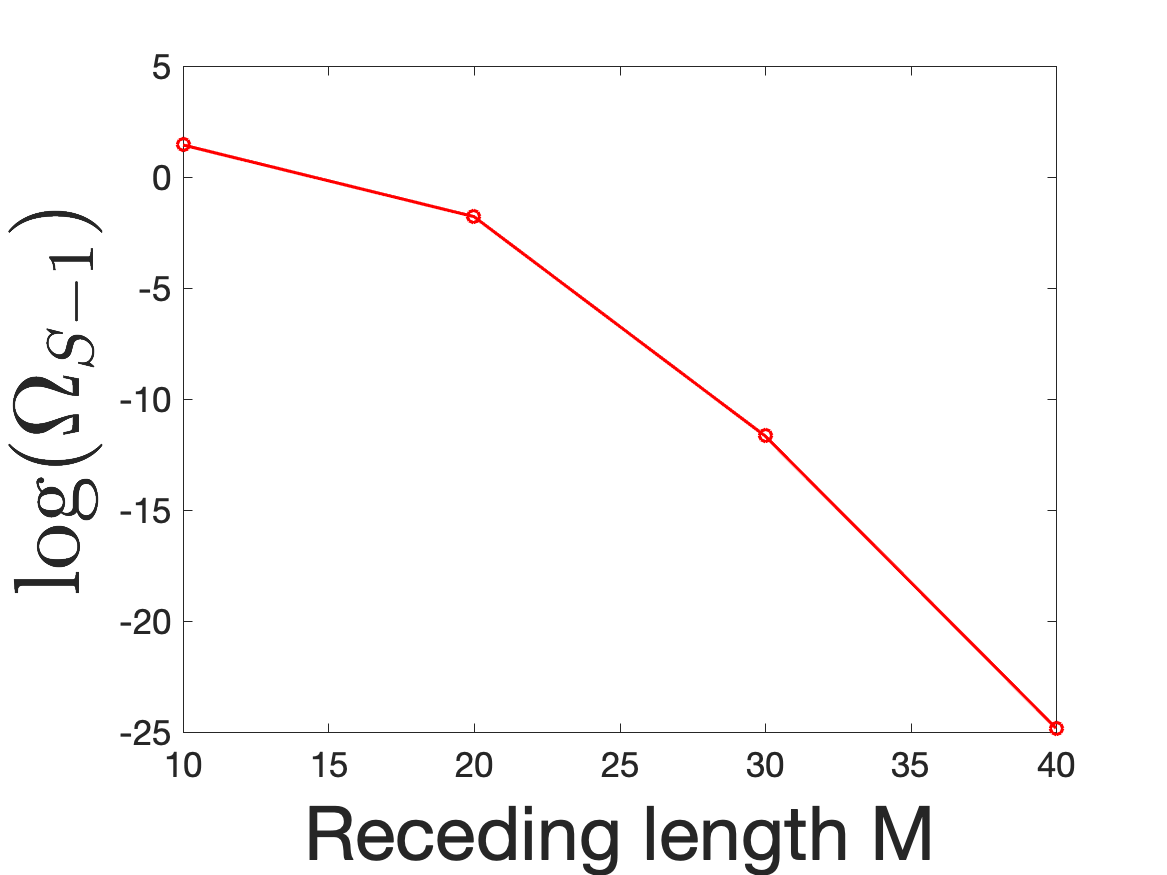}}
	\includegraphics[width=5cm, height=0.25cm]{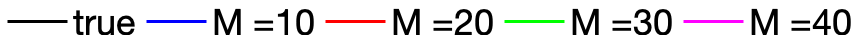}
	\caption{\textit{Simulation result for Case 1. We see from (a) that the middle stages are more and more precise as $M$ increases, although all trajectories lose precision at the horizon tail. From (b) we see that the errors for the middle groups are stable for all trajectories, since they have the same iteration times. From (c) we recover the linear decay with respect to the iteration count. From (d) we recover the linear decay with respect to the receding horizon length. All figures are implied by Theorem \ref{thm:5}.}}\label{fig:case1}
\end{figure}

\begin{figure}[!htp]
	\centering     
	\subfigure[]{\label{Case21}\includegraphics[width=43.7mm]{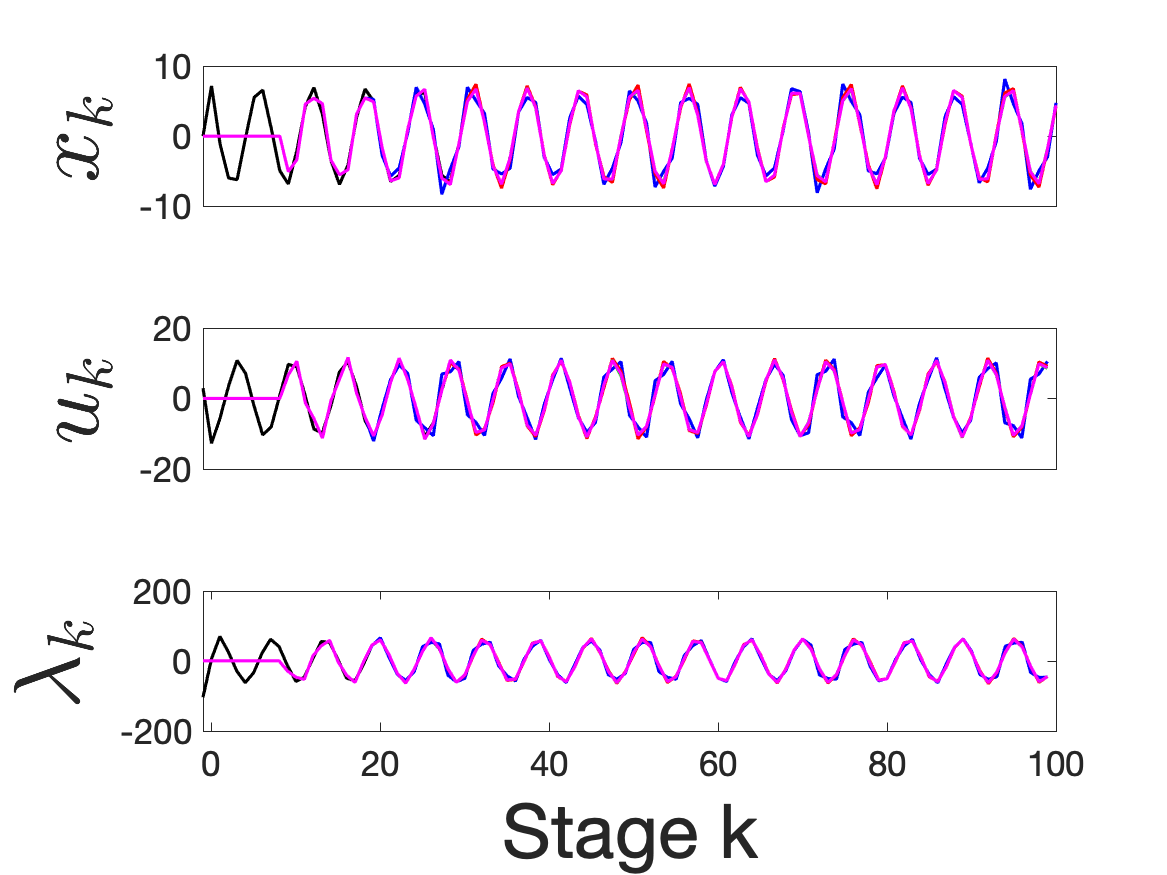}}
	\subfigure[]{\label{Case22}\includegraphics[width=43.7mm]{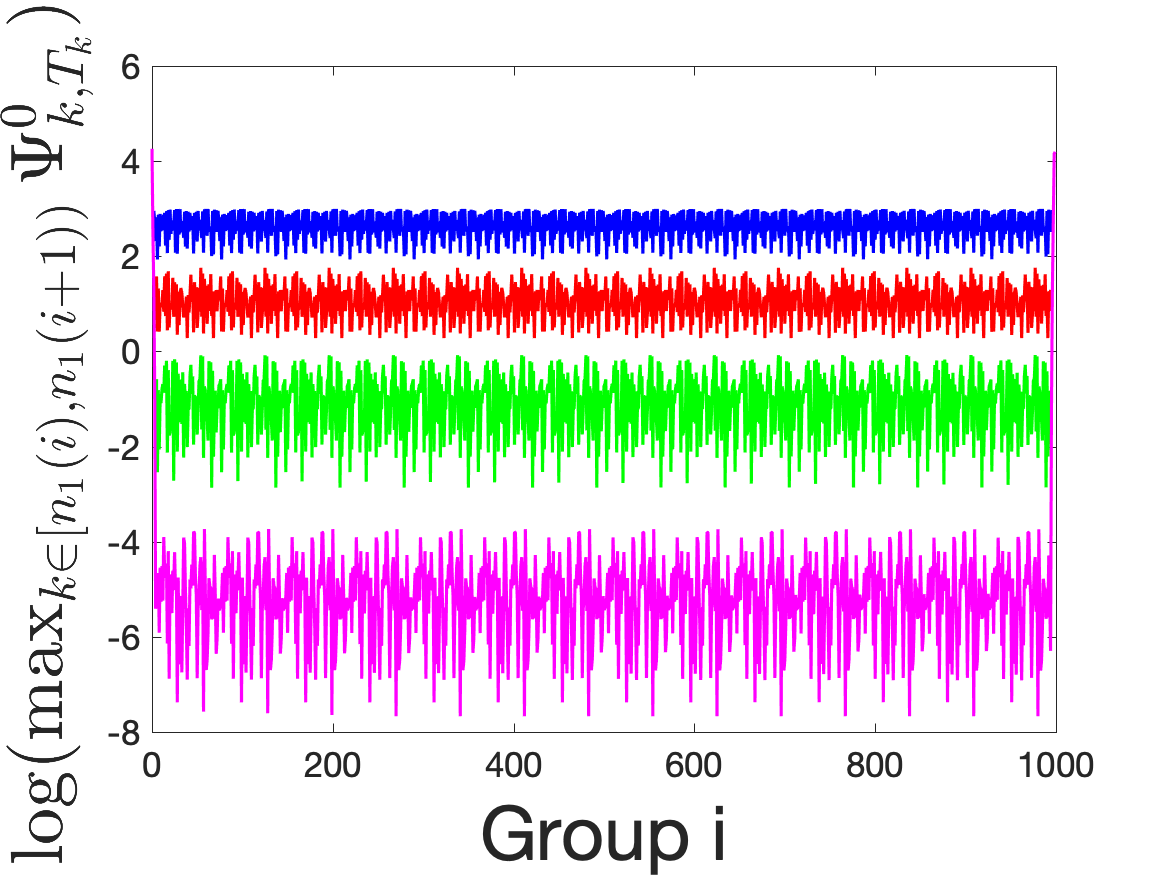}}
	\subfigure[]{\label{Case23}\includegraphics[width=43.7mm]{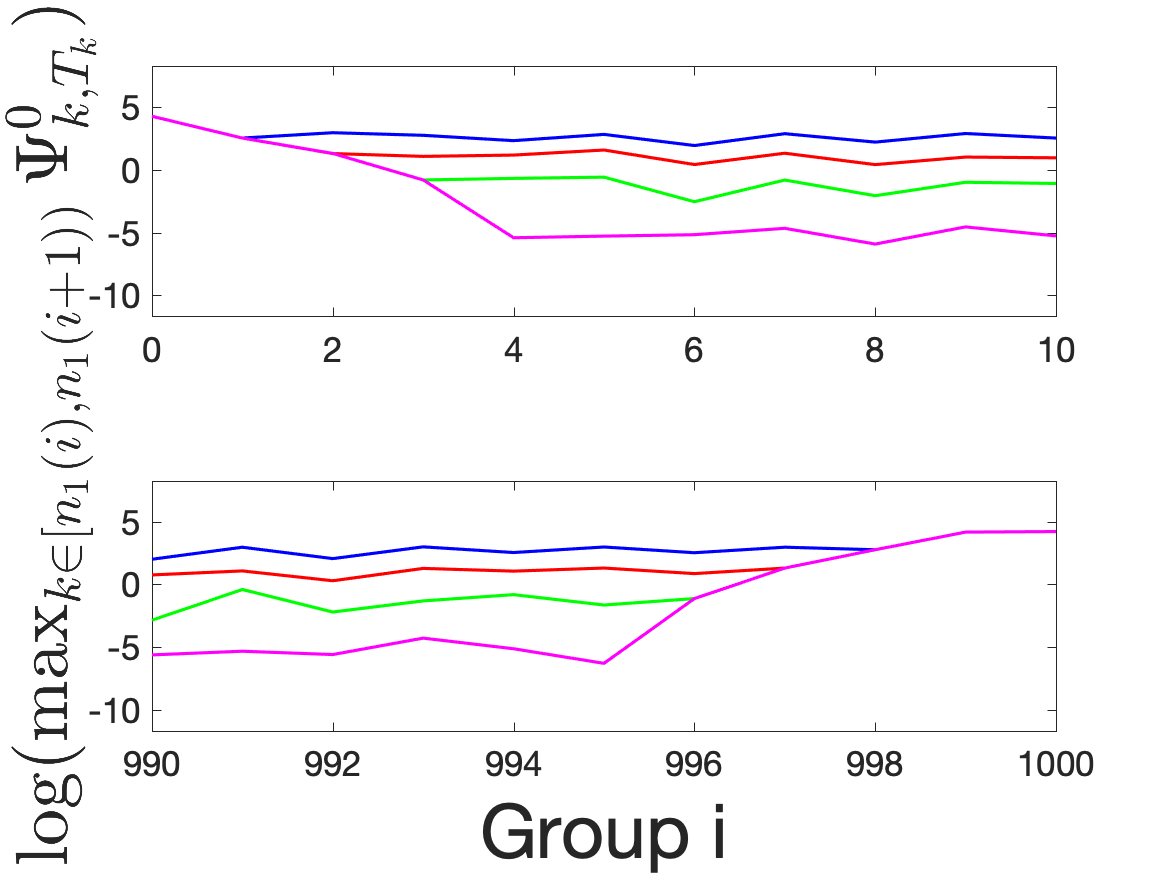}}
	\subfigure[]{\label{Case24}\includegraphics[width=43.7mm]{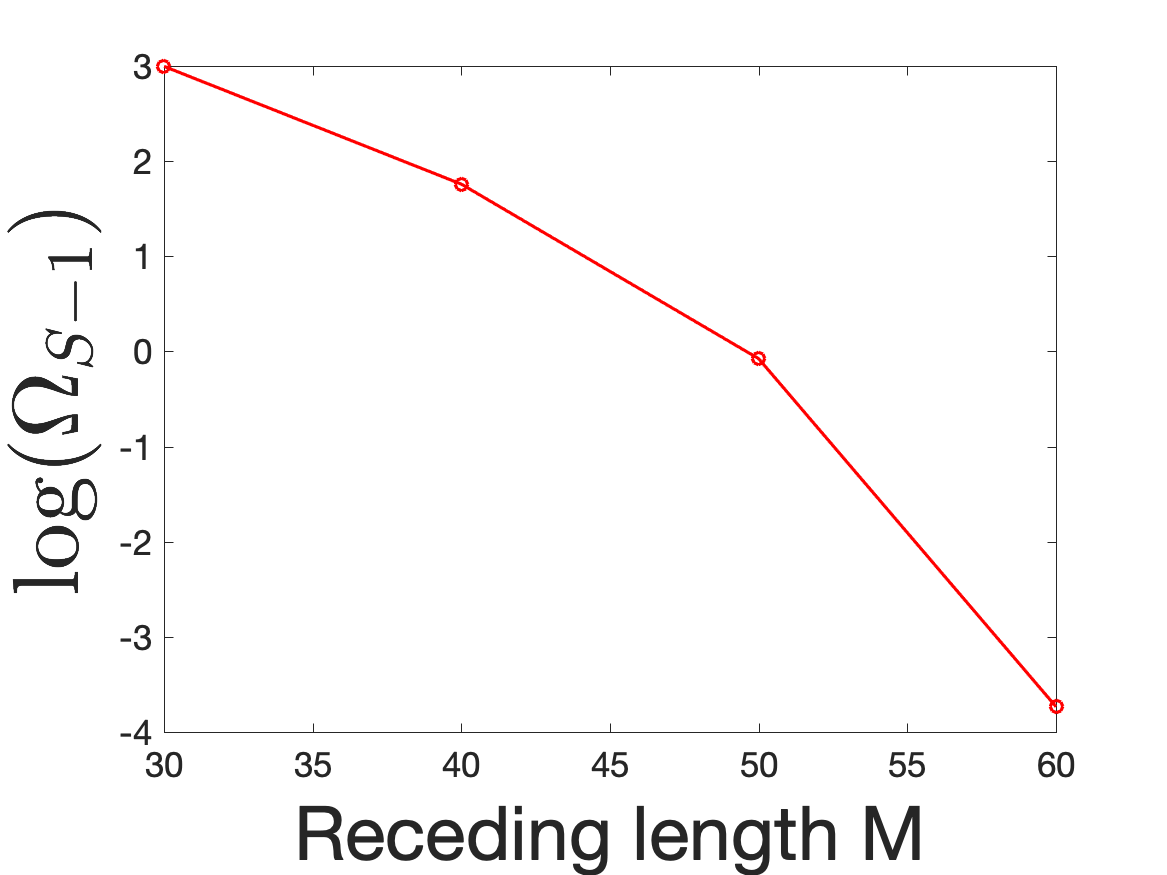}}
	\includegraphics[width=5cm, height=0.25cm]{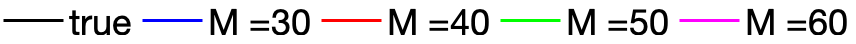}
	\caption{\textit{Simulation result for Case 2. Same interpretation as for Figure \ref{fig:case1}.}}\label{fig:case2}
\end{figure}

\begin{figure}[!htp]
	\centering     
	\subfigure[]{\label{Case31}\includegraphics[width=43.7mm]{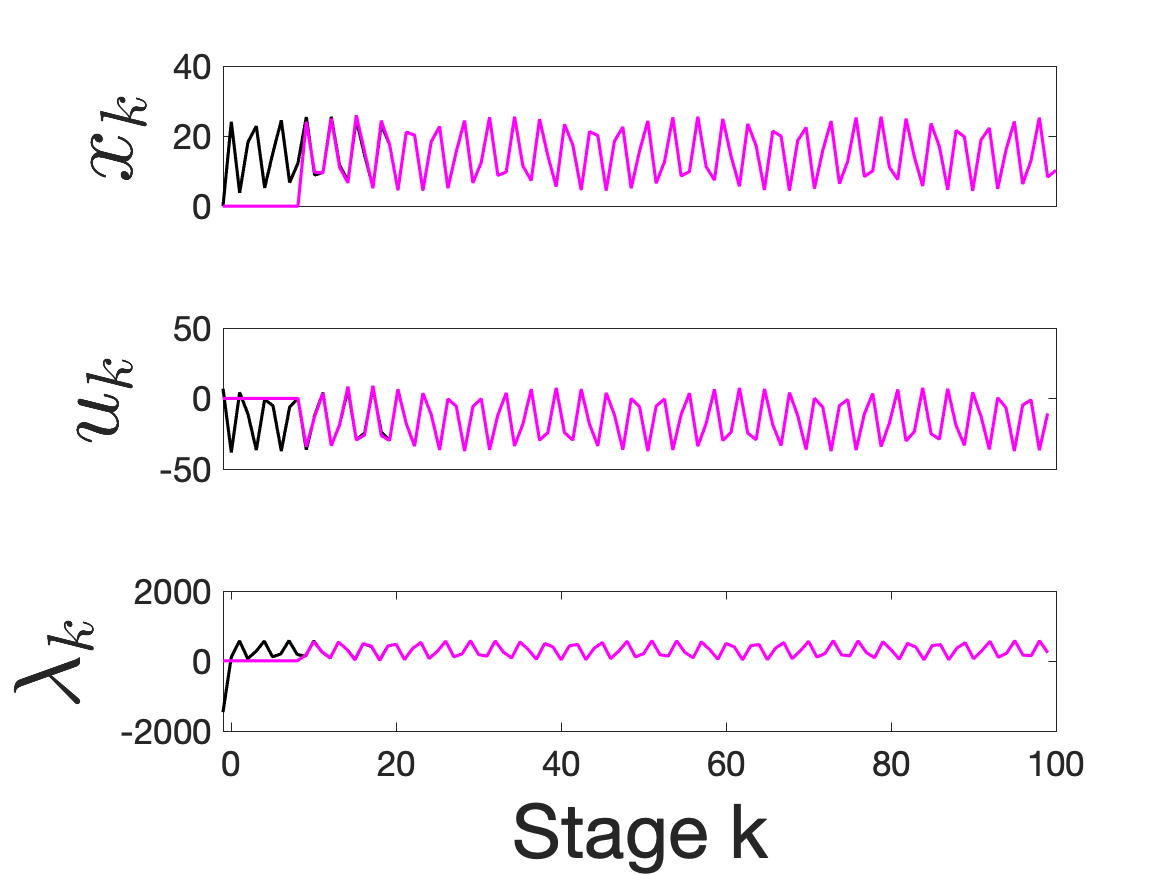}}
	\subfigure[]{\label{Case32}\includegraphics[width=43.7mm]{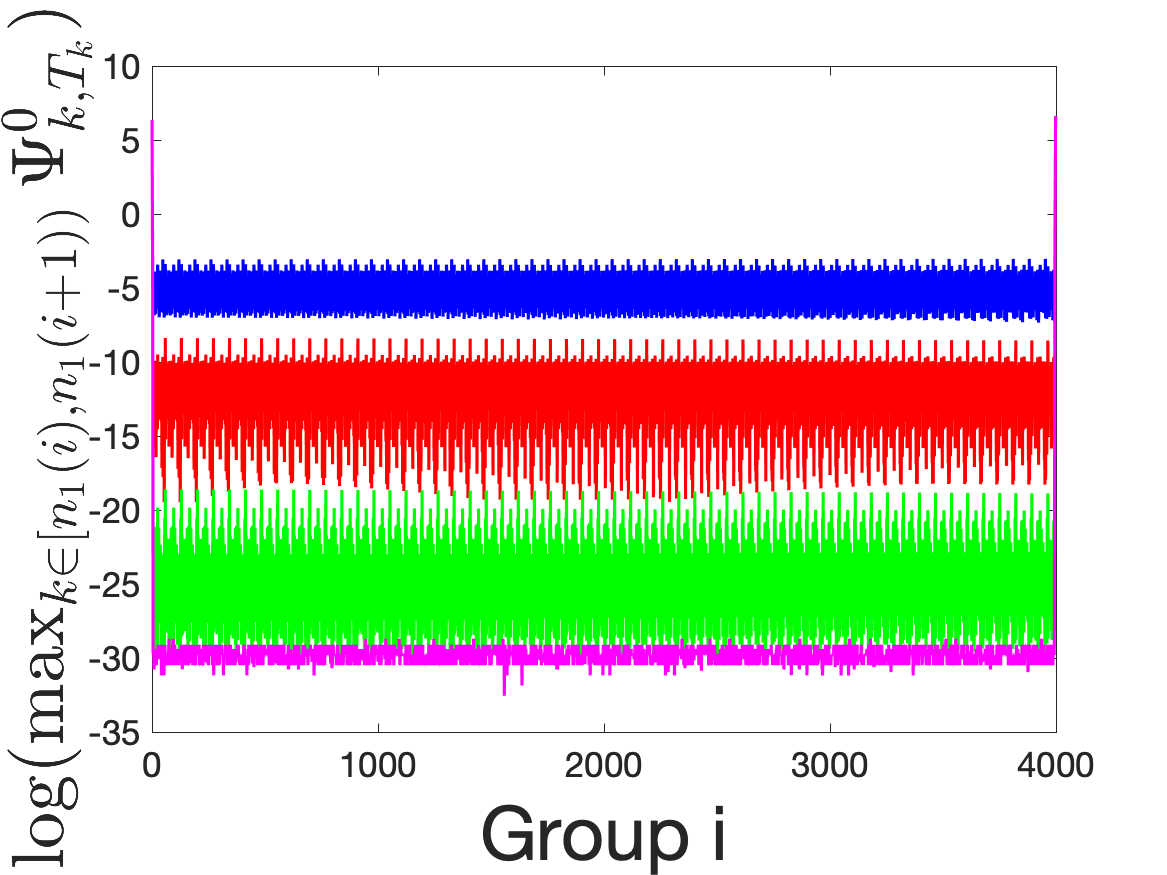}}
	\subfigure[]{\label{Case33}\includegraphics[width=43.7mm]{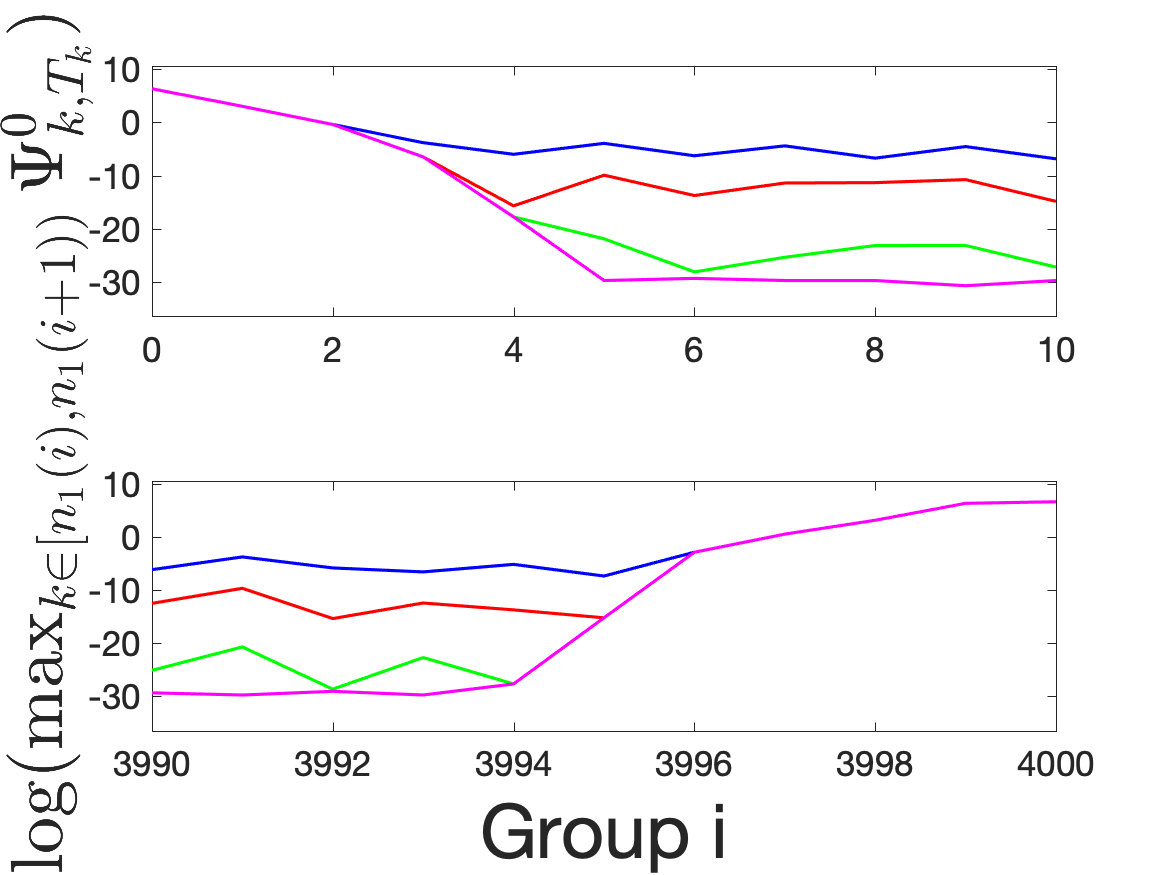}}
	\subfigure[]{\label{Case34}\includegraphics[width=43.7mm]{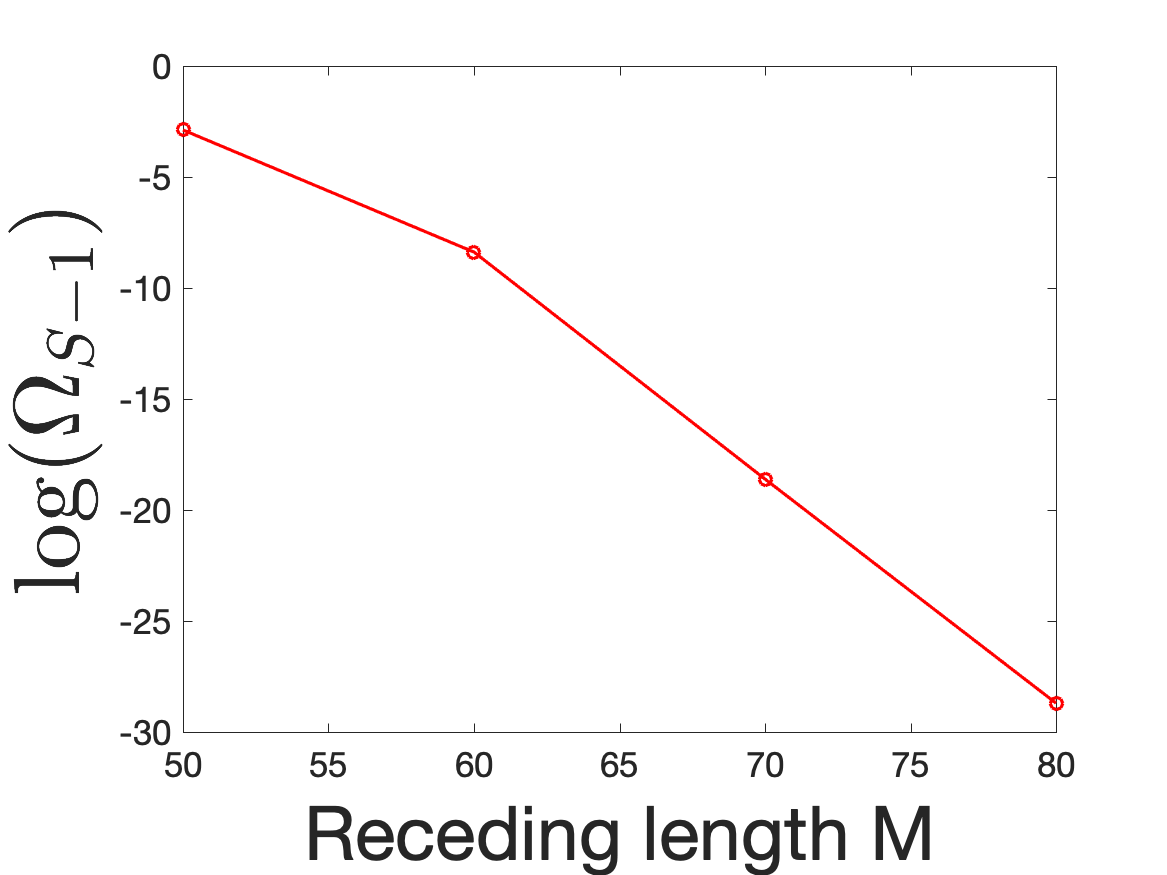}}
	\includegraphics[width=5cm, height=0.25cm]{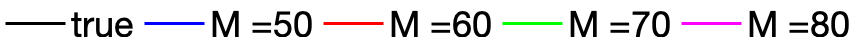}
	\caption{\textit{Simulation result for Case 3. Same interpretation as for Figure \ref{fig:case1}.}}\label{fig:case3}
\end{figure}

\begin{table*}[!thp]\label{tab:2}
\caption{The overhead and precision comparison with related methods (with standard deviation for runtimes in ()).}
\centering
\begin{tabular}{l|c|c|c|c|c|c}
\hline
\multirow{2}{*}{Method} & \multicolumn{2}{|c|}{Case 1} & \multicolumn{2}{|c|}{Case 2} & \multicolumn{2}{|c}{Case 3}\\
\cline{2-7}
& Time(s) & StageErr ($10^{-14}$) & Time(s) & StageErr ($10^{-6}$) & Time(s) & {StageErr ($10^{-11}$)} \\
\hline
\cite{Diehl2005Nominal} & 13.25 (0.33) & 0.02 & 27.28 (1.34) & 0.01 & 118.23 (6.39) & 0.03\\
\cite{Wang2010Fast} &  38.94 (1.14) & 0.01 & 80.31 (5.69) & 0.00 & 349.87 (16.08) & 0.01\\
Alg \ref{alg:1}&  1.32 (0.02) & 31.41& 3.03 (0.80) & 42.87 & 11.59 (0.58) & 22.33\\
Alg \ref{alg:1} ($L=1$) & 13.01 (0.18) & 0.02  & 28.20 (3.55) & 0.01 & 118.28 (5.26) & 0.02\\
Alg \ref{alg:1} (NoTech) & 1.32 (0.03) & 0.04 & 2.79 (0.27) & 0.01 & 12.18 (1.22) & 0.03\\
		\hline
	\end{tabular}
\end{table*}

Moreover, we compare the compute time and accuracy with some representative RTI methods in \cite{Diehl2005Nominal, Wang2010Fast}. We also modify different components of Algorithm \ref{alg:1} to test its effectiveness. In particular, we still consider solving Problem \eqref{pro:3} under three setups summarized in Table \ref{tab:1}. We also tried to implement \cite{Diehl2005Real}, which performs a single Newton step for a sequence of embedded subproblems whose boundary stages are given by $\{[i, N]\}_{i\in[0, N-1]}$. For the horizon considered, such a method takes more than one hour even solving a mild program in Case 1, which is significantly more time consuming than all other competing methods. Thus, we believe such regime may not be suitable for our setup. While there is a scope to combine that method with our approach (that is use it only towards the end of the MPC loop), we defer testing and tuning to future research. The implementation details of each competing method are as follows:
\begin{enumerate}[label=(\alph*),topsep=1pt]
\setlength\itemsep{-0.1em}

\item  A standard RTI-based MPC scheme from \cite{Diehl2005Nominal}. It uses lag $1$ and hence the boundary stages are given by $\{[i, i+M]\}_{i\in[0, N-M]}$. We apply the method on our subproblem formulation \eqref{pro:2}. For simplicity, we use the original Hessian in Newton equation.

\item An online MPC regime which performs multiple Newton steps for each subproblem \cite{Wang2010Fast}. We perform 3 Newton steps in our implementation. It also uses lag $1$, and we replace that subproblem formulation by \eqref{pro:2}. We notice that \cite{Wang2010Fast} studied convex problems, while its idea of running multiple Newton steps per sampling time is not limited to convex problems. Certainly, performing multiple Newton steps is also allowed by other literature (e.g. \cite{Zanelli2021Lyapunov}).

\item Algorithm \ref{alg:1}: we let $L=10$ and, together with (b) and (c), let $M = 80$ for all three cases.

\item Algorithm \ref{alg:1} (lag $1$): based on (c), we set $L = 1$ to test if the condition \eqref{equ:cond:rhoeps} is necessary for the convergence.

\item Algorithm \ref{alg:1} (without boundary techniques): based on (c), we do not use the boundary iterates updating techniques discussed in Remark \ref{rem:2}. 

\end{enumerate}

We note that two tricks --- adopting a larger lag and boundary techniques --- are critical in our analysis, and we test them in (d) and (e) respectively to see if they are necessary for the convergence. For all methods, we perform 10 independent runs and compare the running time and the precision of each middle stage: $\max_{k\in[M, N-M]}\{\|\hbx_k- \tx_k\| \vee \|\hbu_k - \tu_k\| \vee \|\hblambda_k  - \tlambda_k\|\}$.

The results are summarized in Table \ref{tab:2}; we include the standard deviation of running time for multiple runs. From the table, we see that among all competing methods the proposed Algorithm \ref{alg:1} (with and without boundary techniques) has the least running time. All methods except \cite{Wang2010Fast} perform a single Newton step for each subproblem, while \cite{Wang2010Fast} performs 3 Newton steps. Thus, \cite{Wang2010Fast} is expected to be the most time consuming though it is also the most accurate as our results also show. Faster but less accurate than \cite{Wang2010Fast},  \cite{Diehl2005Nominal} and Algorithm \ref{alg:1} ($L=1$) have similar running time and accuracy, which is expected since they are quite similar. The only difference is that Algorithm \ref{alg:1} ($L=1$) applies the boundary techniques, which has no effect on running time. Faster than either \cite{Wang2010Fast},  \cite{Diehl2005Nominal}, or Algorithm \ref{alg:1} ($L=1$), since, due to the larger lag they adopt, they have fewer subproblems to solve,  Algorithm \ref{alg:1} and Algorithm \ref{alg:1} (NoTech) have similar running time. One notable difference is that  Algorithm \ref{alg:1} (NoTech), the fastest overall, always has better accuracy than Algorithm \ref{alg:1} (though it should also be noted that the absolute accuracy is very stringent in all cases). We conclude that while our boundary techniques in Remark \ref{rem:2} seem necessary for local convergence they do not result in a more accurate algorithm. Similarly, while we cannot guarantee with our theory that the case $L=1$ would converge it seems to do just fine in the problem we illustrated here.

In summary, the feature that we predict for our analysis: the per-stage improvement of the tracking error for online algorithms for sufficiently large lag $L$ is exhibited by numerical experiments, but some of the assumptions we need may be stronger than those where this phenomenon is observed in practice. Thus, we believe that there exists a gap between our theory and practice. How to relax condition \eqref{equ:cond:rhoeps} and remove or modify the boundary techniques deserve further analysis.

\section{Conclusion}\label{sec:6}

In this paper, we analyze a one-Newton-step-per-horizon online lag-$L$ MPC approach. We prove that it exhibits a phenomenon that we call \textit{superconvergence}. That is, when initialized close enough to the solution of \eqref{pro:1}, not only does the online optimization approach track the solution of \eqref{pro:1}, but it actually decreases the tracking error as the receding horizon moves forward to a minimum value, which decays exponentially in the length of the receding horizon provided that the lag $L$ is suitably large. Our approach is based on three key steps. (i) We modify the terminal objective for subproblems to make sure that the full-horizon problem having positive definite reduced Hessian induces the MPC problem having positive definite reduced Hessian for a suitable, uniform parameter $\mu$. (ii) We derive the one-step error recursion based on the structure of the KKT inverse. By this recursion, the error consists of two parts: algorithmic error and perturbation error.  (iii) Based on the previous two steps, we prove that the proposed algorithm enjoys exponential convergence in terms of the receding-horizon length. As a practical matter, one may not be interested in getting very high accuracy at the cost of increasing the MPC horizon but our analysis elucidates the asymptotic behavior with respect to it, which may guide the selection of key parameters for MPC. To the best of our knowledge, the theoretical guarantee from this paper for online nonlinear MPC was unknown prior to our work.

We note several potential extensions to this work. Some of them are discussed in Remark \ref{rem:6}. Additionally, having to provide an estimate of the dual variable in the MPC subproblem in \eqref{pro:2} is inconvenient, although we emphasize that it is hard to see an alternative if we want to assume SOSC for the full horizon problem only. In our example in Section \ref{sec:5}, ignoring that term worked fine, but we would like to identify larger classes of problems for which we can drop that term or, as an alternative, determine other regularization or multiplier estimation mechanisms for \eqref{pro:2}. Moreover, we have not tackled here the issue of global convergence (to a stationary point) of the approach. It would be  worth analyzing the performance of the proposed MPC strategy when combined with backtracking line search or when exact second-derivative calculations are replaced by structured quasi-Newton approaches. Finally, we aim to extend the analysis to the case where the control and states may have constraints, as was done in \cite{Xu2019Exponentially}.

\section*{Acknowledgment}
We thank the Associated Editor and three anonymous reviewers for helpful suggestions, which have led to a better presentation. This material was based upon work supported by the U.S. Department of Energy, Office of Science, Office of Advanced Scientific Computing Research (ASCR) under Contract DE-AC02-06CH11347 and by NSF through award CNS-1545046.

\section*{Supplementary material}

The proofs of all results in Section \ref{sec:4} are collected in the supplementary material.

%


%
%

\ifCLASSOPTIONcaptionsoff
  \newpage
\fi



%

\bibliographystyle{IEEEtran}
\bibliography{ref}

\begin{thebibliography}{10}
\providecommand{\url}[1]{#1}
\csname url@samestyle\endcsname
\providecommand{\newblock}{\relax}
\providecommand{\bibinfo}[2]{#2}
\providecommand{\BIBentrySTDinterwordspacing}{\spaceskip=0pt\relax}
\providecommand{\BIBentryALTinterwordstretchfactor}{4}
\providecommand{\BIBentryALTinterwordspacing}{\spaceskip=\fontdimen2\font plus
\BIBentryALTinterwordstretchfactor\fontdimen3\font minus
  \fontdimen4\font\relax}
\providecommand{\BIBforeignlanguage}[2]{{%
\expandafter\ifx\csname l@#1\endcsname\relax
\typeout{** WARNING: IEEEtran.bst: No hyphenation pattern has been}%
\typeout{** loaded for the language `#1'. Using the pattern for}%
\typeout{** the default language instead.}%
\else
\language=\csname l@#1\endcsname
\fi
#2}}
\providecommand{\BIBdecl}{\relax}
\BIBdecl

\bibitem{Broeck2011model}
\BIBentryALTinterwordspacing
L.~V. den Broeck, M.~Diehl, and J.~Swevers, ``A model predictive control
  approach for time optimal point-to-point motion control,''
  \emph{Mechatronics}, vol.~21, no.~7, pp. 1203--1212, oct 2011. [Online].
  Available: \url{https://doi.org/10.1016/j.mechatronics.2011.07.008}
\BIBentrySTDinterwordspacing

\bibitem{Dunbar2012Distributed}
\BIBentryALTinterwordspacing
W.~B. Dunbar and D.~S. Caveney, ``Distributed receding horizon control of
  vehicle platoons: Stability and string stability,'' \emph{{IEEE} Transactions
  on Automatic Control}, vol.~57, no.~3, pp. 620--633, 2012. [Online].
  Available: \url{https://doi.org/10.1109/tac.2011.2159651}
\BIBentrySTDinterwordspacing

\bibitem{Kraus2013Moving}
\BIBentryALTinterwordspacing
T.~Kraus, H.~Ferreau, E.~Kayacan, H.~Ramon, J.~D. Baerdemaeker, M.~Diehl, and
  W.~Saeys, ``Moving horizon estimation and nonlinear model predictive control
  for autonomous agricultural vehicles,'' \emph{Computers and Electronics in
  Agriculture}, vol.~98, pp. 25--33, oct 2013. [Online]. Available:
  \url{https://doi.org/10.1016/j.compag.2013.06.009}
\BIBentrySTDinterwordspacing

\bibitem{Xu2019Exponentially}
\BIBentryALTinterwordspacing
W.~Xu and M.~Anitescu, ``Exponentially {C}onvergent {R}eceding {H}orizon
  {S}trategy for {C}onstrained {O}ptimal {C}ontrol,'' \emph{Vietnam J. Math.},
  vol.~47, no.~4, pp. 897--929, 2019. [Online]. Available:
  \url{https://doi.org/10.1007/s10013-019-00375-1}
\BIBentrySTDinterwordspacing

\bibitem{Diehl2005Real}
\BIBentryALTinterwordspacing
M.~Diehl, H.~G. Bock, and J.~P. Schlöder, ``A real-time iteration scheme for
  nonlinear optimization in optimal feedback control,'' \emph{{SIAM} Journal on
  Control and Optimization}, vol.~43, no.~5, pp. 1714--1736, jan 2005.
  [Online]. Available: \url{https://doi.org/10.1137/S0363012902400713}
\BIBentrySTDinterwordspacing

\bibitem{Diehl2005Nominal}
\BIBentryALTinterwordspacing
M.~Diehl, R.~Findeisen, H.~Bock, F.~Allgöwer, and J.~Schlöder, ``Nominal
  stability of real-time iteration scheme for nonlinear model predictive
  control,'' \emph{{IEE} Proceedings - Control Theory and Applications}, vol.
  152, no.~3, pp. 296--308, may 2005. [Online]. Available:
  \url{https://doi.org/10.1049/ip-cta:20040008}
\BIBentrySTDinterwordspacing

\bibitem{Wynn2014Convergence}
\BIBentryALTinterwordspacing
A.~Wynn, M.~Vukov, and M.~Diehl, ``Convergence guarantees for moving horizon
  estimation based on the real-time iteration scheme,'' \emph{IEEE Trans.
  Automat. Control}, vol.~59, no.~8, pp. 2215--2221, 2014. [Online]. Available:
  \url{https://doi.org/10.1109/TAC.2014.2298984}
\BIBentrySTDinterwordspacing

\bibitem{Ohtsuka1997Real}
\BIBentryALTinterwordspacing
T.~Ohtsuka and H.~A. Fujii, ``Real-time optimization algorithm for nonlinear
  receding-horizon control,'' \emph{Automatica J. IFAC}, vol.~33, no.~6, pp.
  1147--1154, 1997. [Online]. Available:
  \url{https://doi.org/10.1016/S0005-1098(97)00005-8}
\BIBentrySTDinterwordspacing

\bibitem{Diehl2002Real}
\BIBentryALTinterwordspacing
M.~Diehl, H.~Bock, J.~P. Schlöder, R.~Findeisen, Z.~Nagy, and F.~Allgöwer,
  ``Real-time optimization and nonlinear model predictive control of processes
  governed by differential-algebraic equations,'' \emph{Journal of Process
  Control}, vol.~12, no.~4, pp. 577--585, jun 2002. [Online]. Available:
  \url{https://doi.org/10.1016/S0959-1524(01)00023-3}
\BIBentrySTDinterwordspacing

\bibitem{Zavala2010Real}
\BIBentryALTinterwordspacing
V.~M. Zavala and M.~Anitescu, ``Real-time nonlinear optimization as a
  generalized equation,'' \emph{SIAM J. Control Optim.}, vol.~48, no.~8, pp.
  5444--5467, 2010. [Online]. Available:
  \url{https://doi.org/10.1137/090762634}
\BIBentrySTDinterwordspacing

\bibitem{Dinh2012Adjoint}
\BIBentryALTinterwordspacing
Q.~T. Dinh, C.~Savorgnan, and M.~Diehl, ``Adjoint-based predictor-corrector
  sequential convex programming for parametric nonlinear optimization,''
  \emph{SIAM J. Optim.}, vol.~22, no.~4, pp. 1258--1284, 2012. [Online].
  Available: \url{https://doi.org/10.1137/110844349}
\BIBentrySTDinterwordspacing

\bibitem{Wang2010Fast}
\BIBentryALTinterwordspacing
Y.~Wang and S.~Boyd, ``Fast model predictive control using online
  optimization,'' \emph{{IEEE} Transactions on Control Systems Technology},
  vol.~18, no.~2, pp. 267--278, 2010. [Online]. Available:
  \url{https://doi.org/10.1109/tcst.2009.2017934}
\BIBentrySTDinterwordspacing

\bibitem{Zanelli2021Lyapunov}
\BIBentryALTinterwordspacing
A.~Zanelli, Q.~Tran-Dinh, and M.~Diehl, ``A {L}yapunov function for the
  combined system-optimizer dynamics in inexact model predictive control,''
  \emph{Automatica}, vol. 134, p. 109901, dec 2021. [Online]. Available:
  \url{https://doi.org/10.1016/j.automatica.2021.109901}
\BIBentrySTDinterwordspacing

\bibitem{Douglas1973Superconvergence}
\BIBentryALTinterwordspacing
J.~Douglas and T.~Dupont, ``Superconvergence for {G}alerkin methods for the two
  point boundary problem via local projections,'' \emph{Numerische Mathematik},
  vol.~21, no.~3, pp. 270--278, 1973. [Online]. Available:
  \url{https://doi.org/10.1007/BF01436631}
\BIBentrySTDinterwordspacing

\bibitem{Na2020Exponential}
\BIBentryALTinterwordspacing
S.~Na and M.~Anitescu, ``Exponential decay in the sensitivity analysis of
  nonlinear dynamic programming,'' \emph{{SIAM} Journal on Optimization},
  vol.~30, no.~2, pp. 1527--1554, jan 2020. [Online]. Available:
  \url{https://doi.org/10.1137/19M1265065}
\BIBentrySTDinterwordspacing

\bibitem{Verschueren2017Sparsity}
\BIBentryALTinterwordspacing
R.~Verschueren, M.~Zanon, R.~Quirynen, and M.~Diehl, ``A sparsity preserving
  convexification procedure for indefinite quadratic programs arising in direct
  optimal control,'' \emph{{SIAM} Journal on Optimization}, vol.~27, no.~3, pp.
  2085--2109, Jan. 2017. [Online]. Available:
  \url{https://epubs.siam.org/doi/10.1137/16M1081543}
\BIBentrySTDinterwordspacing

\bibitem{Xu2018Exponentially}
\BIBentryALTinterwordspacing
W.~Xu and M.~Anitescu, ``Exponentially accurate temporal decomposition for
  long-horizon linear-quadratic dynamic optimization,'' \emph{{SIAM} Journal on
  Optimization}, vol.~28, no.~3, pp. 2541--2573, 2018. [Online]. Available:
  \url{https://doi.org/10.1137/16m1081993}
\BIBentrySTDinterwordspacing

\bibitem{Na2020Overlapping}
\BIBentryALTinterwordspacing
S.~Na, S.~Shin, M.~Anitescu, and V.~M. Zavala, ``Overlapping {S}chwarz
  decomposition for nonlinear optimal control,'' \emph{arXiv preprint
  arXiv:2005.06674}, 2020. [Online]. Available:
  \url{https://arxiv.org/abs/2005.06674}
\BIBentrySTDinterwordspacing

\bibitem{Barrows2014Time}
\BIBentryALTinterwordspacing
C.~Barrows, M.~Hummon, W.~Jones, and E.~Hale, ``Time domain partitioning of
  electricity production cost simulations,'' National Renewable Energy
  Lab.(NREL), Golden, CO (United States), Tech. Rep., jan 2014. [Online].
  Available: \url{https://doi.org/10.2172/1123223}
\BIBentrySTDinterwordspacing

\bibitem{Na2021Fast}
\BIBentryALTinterwordspacing
S.~Na, M.~Anitescu, and M.~Kolar, ``A fast temporal decomposition procedure for
  long-horizon nonlinear dynamic programming,'' \emph{arXiv preprint
  arXiv:2107.11560}, 2021. [Online]. Available:
  \url{https://arxiv.org/abs/2107.11560}
\BIBentrySTDinterwordspacing

\bibitem{Zhang2009Performance}
\BIBentryALTinterwordspacing
Y.~Zhang, K.~Chen, and H.-Z. Tan, ``Performance analysis of gradient neural
  network exploited for online time-varying matrix inversion,'' \emph{{IEEE}
  Transactions on Automatic Control}, vol.~54, no.~8, pp. 1940--1945, aug 2009.
  [Online]. Available: \url{https://doi.org/10.1109/TAC.2009.2023779}
\BIBentrySTDinterwordspacing

\bibitem{Zhang2017Three}
\BIBentryALTinterwordspacing
Z.~Zhang, L.~Zheng, J.~Yu, Y.~Li, and Z.~Yu, ``Three recurrent neural networks
  and three numerical methods for solving a repetitive motion planning scheme
  of redundant robot manipulators,'' \emph{{IEEE}/{ASME} Transactions on
  Mechatronics}, vol.~22, no.~3, pp. 1423--1434, jun 2017. [Online]. Available:
  \url{https://doi.org/10.1109/TMECH.2017.2683561}
\BIBentrySTDinterwordspacing

\bibitem{Zhang2018Robustness}
\BIBentryALTinterwordspacing
Z.~Zhang, L.~Kong, L.~Zheng, P.~Zhang, X.~Qu, B.~Liao, and Z.~Yu, ``Robustness
  analysis of a power-type varying-parameter recurrent neural network for
  solving time-varying {QM} and {QP} problems and applications,'' \emph{{IEEE}
  Transactions on Systems, Man, and Cybernetics: Systems}, pp. 1--14, 2018.
  [Online]. Available: \url{https://doi.org/10.1109/TSMC.2018.2866843}
\BIBentrySTDinterwordspacing

\bibitem{Zhang2019Power}
\BIBentryALTinterwordspacing
Z.~Zhang, L.-D. Kong, and L.~Zheng, ``Power-type varying-parameter {RNN} for
  solving {TVQP} problems: Design, analysis, and applications,'' \emph{{IEEE}
  Transactions on Neural Networks and Learning Systems}, vol.~30, no.~8, pp.
  2419--2433, aug 2019. [Online]. Available:
  \url{https://doi.org/10.1109/TNNLS.2018.2885042}
\BIBentrySTDinterwordspacing

\bibitem{Zhang2018New}
\BIBentryALTinterwordspacing
Z.~Zhang, Y.~Lu, L.~Zheng, S.~Li, Z.~Yu, and Y.~Li, ``A new varying-parameter
  convergent-differential neural-network for solving time-varying convex {QP}
  problem constrained by linear-equality,'' \emph{{IEEE} Transactions on
  Automatic Control}, vol.~63, no.~12, pp. 4110--4125, dec 2018. [Online].
  Available: \url{https://doi.org/10.1109/tac.2018.2810039}
\BIBentrySTDinterwordspacing

\bibitem{Li2015Missile}
\BIBentryALTinterwordspacing
Z.~Li, Y.~Xia, C.-Y. Su, J.~Deng, J.~Fu, and W.~He, ``Missile guidance law
  based on robust model predictive control using neural-network optimization,''
  \emph{{IEEE} Transactions on Neural Networks and Learning Systems}, vol.~26,
  no.~8, pp. 1803--1809, aug 2015. [Online]. Available:
  \url{https://doi.org/10.1109/TNNLS.2014.2345734}
\BIBentrySTDinterwordspacing

\bibitem{Li2016Trajectory}
\BIBentryALTinterwordspacing
Z.~Li, J.~Deng, R.~Lu, Y.~Xu, J.~Bai, and C.-Y. Su, ``Trajectory-tracking
  control of mobile robot systems incorporating neural-dynamic optimized model
  predictive approach,'' \emph{{IEEE} Transactions on Systems, Man, and
  Cybernetics: Systems}, vol.~46, no.~6, pp. 740--749, jun 2016. [Online].
  Available: \url{https://doi.org/10.1109/TSMC.2015.2465352}
\BIBentrySTDinterwordspacing

\bibitem{Li1988Process}
\BIBentryALTinterwordspacing
W.~C. Li and L.~T. Biegler, ``Process control strategies for constrained
  nonlinear systems,'' \emph{Industrial {\&} Engineering Chemistry Research},
  vol.~27, no.~8, pp. 1421--1433, aug 1988. [Online]. Available:
  \url{https://doi.org/10.1021/ie00080a014}
\BIBentrySTDinterwordspacing

\bibitem{Oliveira1995extension}
\BIBentryALTinterwordspacing
N.~M.~D. Oliveira and L.~T. Biegler, ``An extension of newton-type algorithms
  for nonlinear process control,'' \emph{Automatica}, vol.~31, no.~2, pp.
  281--286, feb 1995. [Online]. Available:
  \url{https://doi.org/10.1016/0005-1098(94)00086-X}
\BIBentrySTDinterwordspacing

\bibitem{Ohtsuka2004continuation/GMRES}
\BIBentryALTinterwordspacing
T.~Ohtsuka, ``A continuation/{GMRES} method for fast computation of nonlinear
  receding horizon control,'' \emph{Automatica}, vol.~40, no.~4, pp. 563--574,
  apr 2004. [Online]. Available:
  \url{https://doi.org/10.1016/j.automatica.2003.11.005}
\BIBentrySTDinterwordspacing

\bibitem{Zanelli2017partially}
\BIBentryALTinterwordspacing
A.~Zanelli, R.~Quirynen, G.~Frison, and M.~Diehl, ``A partially tightened
  real-time iteration scheme for nonlinear model predictive control,'' in
  \emph{2017 {IEEE} 56th Annual Conference on Decision and Control ({CDC})},
  IEEE.\hskip 1em plus 0.5em minus 0.4em\relax {IEEE}, dec 2017, pp.
  4388--4393. [Online]. Available:
  \url{https://doi.org/10.1109/CDC.2017.8264306}
\BIBentrySTDinterwordspacing

\bibitem{Diehl2001Real}
\BIBentryALTinterwordspacing
M.~Diehl, ``Real-time optimization for large scale nonlinear processes,'' Ph.D.
  dissertation, 2001. [Online]. Available:
  \url{https://doi.org/10.11588/heidok.00001659}
\BIBentrySTDinterwordspacing

\bibitem{Gros2016linear}
\BIBentryALTinterwordspacing
S.~Gros, M.~Zanon, R.~Quirynen, A.~Bemporad, and M.~Diehl, ``From linear to
  nonlinear {MPC}: bridging the gap via the real-time iteration,''
  \emph{International Journal of Control}, vol.~93, no.~1, pp. 62--80, sep
  2016. [Online]. Available:
  \url{https://doi.org/10.1080/00207179.2016.1222553}
\BIBentrySTDinterwordspacing

\bibitem{Nurkanovic2019Advanced}
\BIBentryALTinterwordspacing
A.~Nurkanovic, A.~Zanelli, S.~Albrecht, and M.~Diehl, ``The advanced step real
  time iteration for {NMPC},'' in \emph{2019 {IEEE} 58th Conference on Decision
  and Control ({CDC})}, IEEE.\hskip 1em plus 0.5em minus 0.4em\relax {IEEE},
  dec 2019, pp. 5298--5305. [Online]. Available:
  \url{https://doi.org/10.1109/CDC40024.2019.9029543}
\BIBentrySTDinterwordspacing

\bibitem{Bonnans2000Perturbation}
\BIBentryALTinterwordspacing
J.~F. Bonnans and A.~Shapiro, \emph{Perturbation Analysis of Optimization
  Problems}.\hskip 1em plus 0.5em minus 0.4em\relax Springer New York, 2000.
  [Online]. Available: \url{https://doi.org/10.1007/978-1-4612-1394-9}
\BIBentrySTDinterwordspacing

\bibitem{Nocedal2006Numerical}
\BIBentryALTinterwordspacing
J.~Nocedal and S.~J. Wright, \emph{Numerical Optimization}, 2nd~ed., ser.
  Springer Series in Operations Research and Financial Engineering.\hskip 1em
  plus 0.5em minus 0.4em\relax Springer New York, 2006. [Online]. Available:
  \url{https://doi.org/10.1007/978-0-387-40065-5}
\BIBentrySTDinterwordspacing

\bibitem{Boyd2004Convex}
\BIBentryALTinterwordspacing
S.~Boyd and L.~Vandenberghe, \emph{Convex Optimization}.\hskip 1em plus 0.5em
  minus 0.4em\relax Cambridge University Press, 2004. [Online]. Available:
  \url{https://web.stanford.edu/~boyd/cvxbook/bv_cvxbook.pdf}
\BIBentrySTDinterwordspacing

\bibitem{Rao1998Application}
C.~V. Rao, S.~J. Wright, and J.~B. Rawlings, ``Application of interior-point
  methods to model predictive control,'' \emph{Journal of Optimization Theory
  and Applications}, vol.~99, no.~3, pp. 723--757, dec 1998.

\bibitem{Frommer2017Bounds}
\BIBentryALTinterwordspacing
A.~Frommer, C.~Schimmel, and M.~Schweitzer, ``Bounds for the decay of the
  entries in inverses and cauchy-stieltjes functions of certain sparse, normal
  matrices,'' \emph{Numerical Linear Algebra with Applications}, vol.~25,
  no.~4, p. e2131, nov 2017. [Online]. Available:
  \url{https://doi.org/10.1002/nla.2131}
\BIBentrySTDinterwordspacing

\end{thebibliography}

\vskip-1.5cm
\begin{flushright}
	\scriptsize \framebox{\parbox{3.5in}{Government License: The	submitted manuscript has been created by UChicago Argonne, LLC, Operator of Argonne National Laboratory (``Argonne"). Argonne, a U.S. Department of Energy Office of Science laboratory, is operated under Contract No. DE-AC02-06CH11357.  The U.S. Government retains for
			itself, and others acting on its behalf, a paid-up
			nonexclusive, irrevocable worldwide license in said
			article to reproduce, prepare derivative works, distribute
			copies to the public, and perform publicly and display
			publicly, by or on behalf of the Government. The Department of Energy will provide public access to these results of federally sponsored research in accordance with the DOE Public Access Plan. http://energy.gov/downloads/doe-public-access-plan. }}
	\normalsize
\end{flushright}

\begin{figure*}[!t]
\begin{minipage}{\textwidth}
\centering
{\LARGE Supplementary material: Superconvergence of Online Optimization for Model Predictive Control}		
		
Sen Na, Mihai Anitescu
\end{minipage}
\end{figure*}
\appendices
\ \\
\newpage

\setcounter{page}{1}

This document collects the proofs of the results in Section \ref{sec:4}. In particular, the proofs of results in Sections \ref{sec:4.1}, \ref{sec:4.2}, \ref{sec:4.3} are provided in Appendices \ref{sec:A}, \ref{sec:B}, \ref{sec:C}, respectively.

\section{Proofs of Results in Section \ref{sec:4.1}}\label{sec:A}

\subsection{Proof of Lemma \ref{lem:1}}\label{pf:lem:1}

It suffices to show that for any $\bw_{k:N} = (\bp_{k:N}, \bq_{k:N-1}) \\= (\bp_k; \bq_k; \ldots; \bp_{N-1}; \bq_{N-1}; \bp_N)$ satisfying the constraints
\begin{equation}\label{pequ:1}
\begin{aligned}
\bp_k =& 0,\\
\bp_{j+1} =& A_j\bp_j + B_j\bq_j, \text{\ \ \ for\ } j \in [k, N-1],
\end{aligned}
\end{equation}
we have $\bw_{k:N}^\top H^{k:N}\bw_{k:N}\geq \gamma_H\|\bw_{k:N}\|^2$. Here, $H^{k:N} = \diag(H_k, \ldots, H_N)$ and $\{A_j, B_j, H_j\}_j$ are evaluated at $(\bz_{k:N}, \\ \blambda_{k:N-1}, \bd_{k:N-1})$. Because the constraints in \eqref{pequ:1} are a subset of the ones in the full problem, we complement $\bw_{k:N}$ by $\bw_{0:k-1}$ with $\bw_{0:k-1} = (\bp_{0:k-1}, \bq_{0:k-1}) = (\bp_0; \bq_0; \ldots; \bp_{k-1}; \bq_{k-1}) = (0; 0; \ldots; 0; 0)$ and let $\bw = (\bw_{0:k-1}; \bw_{k:N})$. Then, we have $G(\bz_{0:N-1}; \bd_{0:N-1})\bw = 0$. Further, 
\begin{align*}
\bw_{k:N}^\top H^{k:N}\bw_{k:N} = \bw^\top H\bw\geq \gamma_H \|\bw\|^2 = \gamma_H \|\bw_{k:N}\|^2,
\end{align*}
where the equalities on two sides are due to the setup of $\bw_{0:k-1}$ and the inequality is due to the premise of our lemma. This completes the proof.

\subsection{Proof of Corollary \ref{cor:1}}\label{pf:cor:1}

Let us write out each stage explicitly. Since $(\tbz_T^0, \tblambda_T^0) = (\bz_{n_1(T):N, T}^0,  \blambda_{n_1(T):N-1, T}^0)\in \N_\epsilon(\tbz_T^\star, \tblambda_T^\star)$,
\begin{align*}
\|\bx_{k, T}^0 - \tx_k\|\vee \|\bu_{k, T}^0 - \tu_k\|\vee \| \blambda_{k, T}^0-\tlambda_k\|\leq \epsilon
\end{align*}
for $ k\in[n_1(T), N-1]$ and $\|\bx_{N, T}^0 - \tx_N\|\leq \epsilon$. We extend the pair $(\tbz^0_T, \tblambda^0_T)$ backward by filling with the primal-dual solution. In particular, we let $\bbz = (\tz_{0:n_1(T)-1}; \tbz_T^0)$ and $\bblambda = (\tlambda_{0:n_1(T)-1}; \tblambda_T^0)$. Since $N - n_1(T)\leq M$,
\begin{align*}
(\bbz, \bblambda) \in \N_{\epsilon, N - n_1(T)}(\tz, \tlambda)\subseteq\N_{\epsilon, M}(\tz, \tlambda),
\end{align*}
where the last relation comes from \eqref{equ:setseq}. By Assumption \ref{ass:M:SOSC}, we know $H_{\rm Re}(\bbz, \bblambda; \bd)\succeq \gamma_H I$. Using Lemma~\ref{lem:1} with $k = n_1(T)$, we finish the proof.

\subsection{Proof of Theorem \ref{thm:1}}\label{pf:thm:1}

Analogous to Lemma \ref{lem:1}, it suffices to show that for any $\tbw_i = \big(\bp_{n_1(i):n_2(i)}, \bq_{n_1(i):n_2(i)-1}\big) = \big(\bp_{n_1(i)}; \bq_{n_1(i)}; \ldots; \bp_{n_2(i)-1};\\ \bq_{n_2(i)-1}; \bp_{n_2(i)}\big)$ satisfying
\begin{equation}
\begin{aligned}
\bp_{n_1(i)} = & 0,\\
\bp_{j+1} = & A_j\bp_j + B_j\bq_j, \text{\ for\ } j \in[n_1(i), n_2(i)-1],
\end{aligned}
\end{equation}
we have $\tbw_i^\top H^i\tbw_i\geq \gamma_H \|\tbw_i\|^2$ with $H^i$ defined in Definition \ref{def:4}. For concreteness, $\{A_j, B_j\}_{j = n_1(i)}^{n_2(i)-1}$, defined in Definition \ref{def:1}, are evaluated at $\big(\bz_{n_1(i):n_2(i)-1, i}^0, \bd_{n_1(i):n_2(i)-1}\big)$; $\{H_j\}_{j = n_1(i)}^{n_2(i)-1}$ are evaluated at $\big(\bz_{n_1(i):n_2(i)-1, i}^0, \blambda_{n_1(i):n_2(i)-1, i}^0, \bd_{n_1(i):n_2(i)-1}\big)$; and $H^i_{n_2(i)}$, defined in Definition \ref{def:4}, is evaluated at $\big(\bx_{n_2(i), i}^0, \\\bu_{n_2(i)}^0, \blambda_{n_2(i)}^0, \bd_{n_2(i)}\big)$. Given such $\tbw_i$, we extend it on both sides such that the extended vector stays in the null space of the full problem. According to the proof of Lemma \ref{lem:1}, we extend backward by filling with 0. In particular, we let
\begin{align*}
\bp_{0:n_1(i) - 1} = 0, \text{\ \ \ \ } \bq_{0:n_1(i)-1} = 0. 
\end{align*}
Correspondingly, we fill the evaluation point with a local solution: $(\tz_{0:n_1(i)-1}, \tlambda_{0:n_1(i)-1})$. As for the forward-in-time extension, we have following two cases.

\noindent{\bf Case 1:} $n_2(i)\geq N-t$. In this case, since the system can evolve at most $t-1$ stages forward, we can easily control it. Specifically, we let $\bq_{n_2(i):N-1} = 0$ and
\begin{equation}\label{pequ:2}
\begin{aligned}
&\bp_{n_2(i) + 1} = A_{n_2(i)}(\bx_{n_2(i), i}^0, \bu_{n_2(i)}^0; \bd_{n_2(i)})\bp_{n_2(i)},\\
&\bp_{j+1} = A_j(\tx_j, \tu_j; \bd_j)\bp_j, \text{for\ } j \in[n_2(i)+1, N-1].
\end{aligned}
\end{equation}
Further, the evaluation point is extended by $\bbz_{n_2(i)+1:N} = \tz_{n_2(i)+1:N}$ and $\bblambda_{n_2(i)+1:N-1} = \tlambda_{n_2(i)+1:N-1}$. To summarize, the whole horizon vector is given by
\begin{multline*}
\bw = \big(\overbrace{0;0;\ldots;0;0}^{[0, n_1(i)-1]}; \overbrace{\bp_{n_1(i)}; \bq_{n_1(i)}; \ldots; \bp_{n_2(i)}}^{\tbw_i}; 0;\\ \underbrace{\bp_{n_2(i)+1}; 0; \ldots; \bp_N}_{\eqref{pequ:2}}\big),
\end{multline*}
and the evaluation point is given by
\begin{align*}
\bbz =& \big(\tz_{0:n_1(i)-1}; \bz_{n_1(i):n_2(i)-1, i}^0; \bx_{n_2(i), i}^0; \bu_{n_2(i)}^0; \tz_{n_2(i)+1:N}\big),\\
\bblambda = &\big(\tlambda_{0:n_1(i)-1}; \blambda_{n_1(i):n_2(i)-1, i}^0; \blambda_{n_2(i)}^0; \tlambda_{n_2(i)+1:N-1}\big).
\end{align*}
Based on this complement, $G(\bbz_{0:N-1}; \bd_{0:N-1})\bw = 0$. Since $(\bbz, \bblambda)\in \N_{\epsilon, M}(\tz, \tlambda)$, by Assumption \ref{ass:M:SOSC},
\begin{align}\label{pequ:3}
\bw^\top H(\bbz, \bblambda; \bd)\bw \geq \gamma_H\|\bw\|^2.
\end{align}
For the left-hand side term, by the definition of $H^i$ in Definition~\ref{def:4},
\begin{align*}
\bw^\top H(\bbz, \bblambda; &\bd)\bw  \\
=& \tbw_i^\top H^i\tbw_i - \mu\|\bp_{n_2(i)}\|^2 + \sum_{j=n_2(i)+1}^{N}\bp_j^\top Q_j\bp_j \\
\leq & \tbw_i^\top H^i\tbw_i - \mu\|\bp_{n_2(i)}\|^2 + \Upsilon \sum_{j=n_2(i)+1}^{N}\|\bp_j\|^2,
\end{align*}
where the inequality is due to Assumption \ref{ass:M:Ubound}. Plugging in \eqref{pequ:3} and using $\gamma_H\|\bw\|^2\geq \gamma_H\|\tbw_i\|^2$, we get
\begin{equation}\label{pequ:4}
\tbw_i^\top H^i\tbw_i \geq \gamma_H\|\tbw_i\|^2+ \mu\|\bp_{n_2(i)}\|^2  - \Upsilon\|\bp_{n_2(i)+1:N}\|^2.
\end{equation}
On the other hand, by \eqref{pequ:2}, boundedness of $\{A_j\}_{j = n_2(i)}^{N-1}$ in Assumption \ref{ass:M:Ubound}, and the condition that $N - n_2(i)\leq t$, we get
\begin{align*}
\small \|\bp_{n_2(i)+1:N}\|^2\leq \sum_{j=1}^{N - n_2(i)}\Upsilon^{2j}\|\bp_{n_2(i)}\|^2\leq \frac{\Upsilon^{2(t+1)} - \Upsilon^2}{\Upsilon^2-1}\|\bp_{n_2(i)}\|^2.
\end{align*}
Together with \eqref{pequ:4}, this implies
\begin{equation}\label{pequ:5}
\small \tbw_i^\top H^i\tbw_i \geq \gamma_H\|\tbw_i\|^2 + \big(\mu - \frac{\Upsilon(\Upsilon^{2(t+1)} - \Upsilon^2)}{\Upsilon^2-1}\big)\|\bp_{n_2(i)}\|^2.
\end{equation}
Therefore, $\mu\geq \frac{\Upsilon(\Upsilon^{2(t+1)} - \Upsilon^2)}{\Upsilon^2-1}$ implies $\tbw_i^\top H^i\tbw_i \geq \gamma_H\|\tbw_i\|^2$.

\noindent{\bf Case 2:} $n_2(i)< N - t$. In this case, we use the controllability condition to do the forward-in-time extension. In particular, we first set $\bq_{n_2(i)} = 0$ and $\bp_{n_2(i)+1}$ as in \eqref{pequ:2}. Then, we apply the following linear dynamics recursively:
\begin{align*}
\small \bp_{j+1} = A_j(\tz_j; \bd_j)\bp_j + B_j(\tz_j; \bd_j)\bq_j, \text{\ for\ } j\in[n_2(i)+1, N-1].
\end{align*}
Thus, we have $\forall l \geq 1$
\begin{equation}\label{pequ:6}
\small \bp_{n_2(i)+1+l} = \rbr{\prod_{h=1}^{l}A_{n_2(i)+h}}\bp_{n_2(i)+1} + \Xi_{n_2(i)+1, l}\begin{pmatrix}
\bq_{n_2(i)+l}\\
\vdots\\
\bq_{n_2(i)+1}
\end{pmatrix}.
\end{equation}
For these matrices, $\{A_j, B_j\}_{j = n_2(i)+1}^{n_2(i)+l}$ are evaluated at the local solution. Let $l = t_{n_2(i)+1}$ with $t_{n_2(i)+1}$ defined in Assumption \ref{ass:M:control}. We set $\bq_{n_2(i)+1: n_2(i)+t_{n_2(i)+1}}$ as
\begin{equation}\label{pequ:7}
\small\begin{pmatrix}
\bq_{n_2(i)+t_{n_2(i)+1}}\\
\vdots\\
\bq_{n_2(i)+1}
\end{pmatrix} = \small - \Xi^\top\rbr{\Xi\Xi^\top}^{-1}\bigg(\prod_{h=1}^{t_{n_2(i)+1}}A_{n_2(i)+h}\bigg)\bp_{n_2(i)+1},
\end{equation}
where $\Xi = \Xi_{n_2(i)+1, t_{n_2(i)+1}}$. Given \eqref{pequ:7}, we calculate $\bp_k$ for $k\in [n_2(i)+2, n_2(i)+1 + t_{n_2(i)+1}]$ using \eqref{pequ:6} with $l = 1, \ldots, t_{n_2(i)+1}$. Then, $\bp_{n_2(i)+1 + t_{n_2(i)+1}} = 0$. Further, we let $\bq_{n_2(i)+1 + t_{n_2(i)+1}:N-1} = 0$ and $\bp_{n_2(i)+2 + t_{n_2(i)+1}:N} = 0$. In summary, the full-horizon evaluation point is same as in Case 1, while the null space vector in this case is
\begin{multline*}
\bw = \big(\overbrace{0;0;\ldots;0;0}^{[0, n_1(i)-1]}; \overbrace{\bp_{n_1(i)}; \bq_{n_1(i)}; \ldots; \bp_{n_2(i)}}^{\tbw_i}; 0;\bp_{n_2(i)+1}; \\
\underbrace{\bq_{n_2(i)+1}; \ldots \bq_{n_2(i)+t_{n_2(i)+1}}}_{\eqref{pequ:6}, \eqref{pequ:7}}; \underbrace{0; 0; \ldots; 0; 0}_{[n_2(i)+1 + t_{n_2(i)+1}, N]}\big).
\end{multline*}
By construction, it follows that $G(\bbz_{0:N-1}; \bd_{0:N-1})\bw = 0$, so \eqref{pequ:3} also holds. Let us bound the magnitude of the forward-in-time extension. Note that
\begin{equation*}
\|\Xi_{n_2(i)+1, l}\|\leq \Upsilon + \Upsilon^2 + \ldots + \Upsilon^l\leq \frac{\Upsilon(\Upsilon^l - 1)}{\Upsilon - 1} \coloneqq \psi_l, \text{\ } \forall l\geq 1. 
\end{equation*}
Therefore, by \eqref{pequ:7} and Assumption \ref{ass:M:control},
\begin{align}\label{pequ:8}
\|\bq_{n_2(i)+1:n_2(i)+t_{n_2(i)+1}}\|\leq &\frac{\psi_{t_{n_2(i)+1}}\Upsilon^{t_{n_2(i)+1}}}{\gamma_C} \|\bp_{n_2(i)+1}\| \nonumber\\
\stackrel{\eqref{pequ:2}}{\leq} &\frac{\psi_t\Upsilon^{t+1}}{\gamma_C}\|\bp_{n_2(i)}\|.
\end{align}
For $\{\bp_k\}_{k = n_2(i)+1}^{n_2(i)+t_{n_2(i)+1}}$, we have $\|\bp_{n_2(i)+1}\|\leq  \Upsilon \|\bp_{n_2(i)}\|$ and, by \eqref{pequ:6} and \eqref{pequ:8}, 
\begin{align*}
\|\bp_{n_2(i)+l}\|&\leq  \Upsilon^l\|\bp_{n_2(i)}\| + \psi_{l-1}\|\bq_{n_2(i)+1:n_2(i)+t_{n_2(i)+1}}\| \\
\leq &\rbr{\Upsilon^l + \frac{\psi_{l-1}\psi_t\Upsilon^{t+1}}{\gamma_C}}\|\bp_{n_2(i)}\|, \text{\ } \small \forall 2\leq l\leq t_{n_2(i)+1}.
\end{align*}
Thus,
\begin{align*}
\|&\bp_{n_2(i)+1:n_2(i)+t_{n_2(i)+1}}\|^2 =  \sum_{l=1}^{t_{n_2(i)+1}}\|\bp_{n_2(i)+l}\|^2 \\
&\leq \bigg(\Upsilon^2 +  2\sum_{l=2}^{t_{n_2(i)+1}} \big(\Upsilon^{2l} + \frac{\psi_{l-1}^2\psi_t^2\Upsilon^{2(t+1)}}{\gamma_C^2}\big)\bigg)\|\bp_{n_2(i)}\|^2\\
&\leq 2 \rbr{\sum_{l=1}^t\Upsilon^{2l} + \frac{\psi_t^2\Upsilon^{2(t+1)}}{\gamma_C^2}\sum_{l=1}^{t-1}\psi_l^2}\|\bp_{n_2(i)}\|^2.
\end{align*}
Combining this inequality with \eqref{pequ:8}, we get
{\small
\begin{multline*}
\|\bq_{n_2(i)+1:n_2(i)+t_{n_2(i)+1}}\|^2 + \|\bp_{n_2(i)+1:n_2(i)+t_{n_2(i)+1}}\|^2\\ \leq \Upsilon'\|\bp_{n_2(i)}\|^2
\end{multline*}
}with $\Upsilon' = 2\rbr{\frac{\Upsilon^{2(t+1)} - \Upsilon^2}{\Upsilon^2-1} + \frac{\psi_t^2\Upsilon^{2(t+1)}}{\gamma_C^2}\cdot\frac{\Upsilon^2}{(\Upsilon - 1)^2}\cdot\frac{\Upsilon^{2t} - 1}{\Upsilon^2-1}}$. On the other hand,
\begin{align*}
\small
&\bw^\top H(\bbz, \bblambda; \bd)\bw \\
&= \tbw_i^\top H^i\tbw_i - \mu\|\bp_{n_2(i)}\|^2+ \sum_{j=n_2(i)+1}^{n_2(i)+t_{n_2(i)+1}}(\bp_j; \bq_j)^\top H_j(\bp_j; \bq_j)\\
&\leq  \tbw_i^\top H^i\tbw_i - \mu\|\bp_{n_2(i)}\|^2 + \Upsilon\sum_{j=n_2(i)+1}^{n_2(i)+t_{n_2(i)+1}}\|\bp_j\|^2 + \|\bq_j\|^2.
\end{align*}
Combining with \eqref{pequ:3},  we get
{\small 
\begin{align*}
\tbw_i^\top&H^i\tbw_i\\
\geq &\gamma_H\|\tbw_i\|^2 + \mu\|\bp_{n_2(i)}\|^2 - \Upsilon\sum_{j=n_2(i)+1}^{n_2(i)+t_{n_2(i)+1}}\|\bp_j\|^2 + \|\bq_j\|^2\\
\geq & \gamma_H\|\tbw_i\|^2 + \rbr{\mu - \Upsilon\Upsilon'}\|\bp_{n_2(i)}\|^2.
\end{align*}
}So in this case we need $\mu \geq \Upsilon\Upsilon'$. Note that this  condition implies the one in Case 1. To simplify it, without loss of generality, we assume $\Upsilon\geq \sqrt{2}/(\sqrt{2}-1)$, then $\Upsilon^2/(\Upsilon^2-1)\leq \Upsilon^2/(\Upsilon - 1)^2\leq 2$. Then, $\Upsilon\Upsilon' \leq \frac{16\Upsilon(\Upsilon^{6t} - \Upsilon^{4t})}{\gamma_C^2}$. This completes the proof.

\section{Proofs of Results in Section \ref{sec:4.2}}\label{sec:B}

\subsection{Proof of Lemma \ref{lem:2}}\label{pf:lem:2}

Let us focus on solving $K\bb = \ba$, where $\bb = (\bw; \bbeta)$ and $\ba = (\bv; \balpha)$ with $\bw = (\bp, \bq)$ and $\bv = (\bl, \br)$ being the variables ordered by stages. Then, the magnitude of each block of $K^{-1}$ can be reflected by the magnitude of $\bb$ if one sets $\ba$ properly. Note that $K\bb = \ba$ is the first-order necessary condition of the following linear-quadratic problem:
\begin{equation}\label{pequ:10}
\begin{aligned}
\min_{\bp, \bq} \text{\ \ \ } & \frac{1}{2}\sum_{k=0}^{N-1} (\bp_k; \bq_k)^\top H_k(\bp_k; \bq_k) + \frac{1}{2}\bp_N^\top H_N\bp_N \\
&\quad\quad\quad- \sum_{k=0}^{N-1}(\bp_k; \bq_k)^\top(\bl_k; \br_k) - \bp_N^\top \bl_N,\\
\text{s.t.}\text{\ \ \ } & \bp_{k+1} = A_k\bp_k + B_k\bq_k + \balpha_k, \text{\ \ } k \in[N-1],\\
& \bp_0 = \balpha_{-1},
\end{aligned}
\end{equation}
where $\{A_k, B_k, H_k\}$ are evaluated at $(\bz, \blambda; \bd)$, and $\bbeta$ in $\bb$ are the Lagrange multipliers. Since SOSC holds for this problem, the point that satisfies the first-order necessary condition is indeed its global solution. Thus, $\bb$ is the optimal primal-dual solution of this problem. Since \eqref{equ:SOSC}, \eqref{equ:control} and \eqref{equ:bound} hold at $(\bz, \blambda)$, by Theorem 5.7 in \cite{Na2020Exponential} and Theorem 5 in \cite{Na2020Overlapping}\footnote{In fact, the linear-quadratic problem in \eqref{pequ:10} is slightly more general than the one in \cite{Na2020Exponential}. But the same argument also holds with a simple extension of that proof. In particular, Theorem 5.7 in \cite{Na2020Exponential} provides the exponential decay of primal variables $(\bp_k; \bq_k)$, while Theorem 5 in \cite{Na2020Overlapping} provides the exponential decay of dual variables $\bbeta_k$.}, there exist constants $\Upsilon_K$, $\rho\in(0, 1)$ such that $\forall i, j \in[0, N]$,
\begin{align*}
\|(\bp_i; \bq_i; \bbeta_{i-1})\|\leq \Upsilon_K\rho^{|i-j|}
\end{align*}
if $\|(\bl_j; \br_j; \balpha_j)\|= 1$ and $\|(\bl_h; \br_h; \balpha_h)\|= 0$ for $h \neq j$. Also, $\|(\bp_i; \bq_i; \bbeta_{i-1})\|\leq \Upsilon_K\rho^i$ if $\|\balpha_{-1}\|= 1$ and other components in $\ba$ are zero. Using the above result, we further bound all blocks as follows. For $i, j\in[N]$, we let $\|(\bl_j; \br_j)\|= 1$ (when $j = N$, we let $\|\bl_N\|= 1$) and let $\balpha_j$ together with $(\bl_h; \br_h; \balpha_h)$ for $h\neq j$ be zero. Then,
\begin{align*}
\|(\bp_i; \bq_i)\| = \|K^{-1}_{(i, j), 1}(\bl_j; \br_j)\|\leq \Upsilon_K\rho^{|i-j|}.
\end{align*}
Since $(\bl_j; \br_j)$ can be any unit vector,
\begin{align}\label{pequ:11}
\|K^{-1}_{(i, j), 1}\| = \max_{\|(\bl_j; \br_j)\|= 1}\|K^{-1}_{(i, j), 1}(\bl_j; \br_j)\|\leq \Upsilon_K\rho^{|i-j|}.
\end{align}
Analogously, we let $\|\balpha_{-1}\| = 1$ and other components in $\ba$ be zero. Then $\forall i\in[N]$ ($\bq_N$ is not present)
\begin{equation}\label{pequ:12}
\begin{aligned}
&\small{\|(\bp_i; \bq_i)\| = \|K^{-1}_{(i, -1), 2}\balpha_{-1}\|\leq \Upsilon_K\rho^{i} \Rightarrow \|K^{-1}_{(i, -1), 2}\|\leq \Upsilon_K\rho^{i}},\\
&\small{\|\bbeta_{i-1}\| =  \|K^{-1}_{(i-1, -1), 3}\balpha_{-1}\|\leq \Upsilon_K\rho^{i} \Rightarrow \|K^{-1}_{(i-1, -1), 3}\|\leq \Upsilon_K\rho^{i}}.
\end{aligned}
\end{equation}
By letting $\|\balpha_{j}\| = 1$ for $j\in[N-1]$, 
\begin{equation}\label{pequ:13}
\begin{aligned}
&\small{\|(\bp_i; \bq_i)\| = \|K^{-1}_{(i, j), 2}\balpha_{j}\|\leq \Upsilon_K\rho^{|i-j|} \Rightarrow \|K^{-1}_{(i, j), 2}\|\leq \Upsilon_K\rho^{|i-j|}},\\
&\small{\|\bbeta_{i-1}\| =  \|K^{-1}_{(i-1, j), 3}\balpha_{j}\|\leq \Upsilon_K\rho^{i} \Rightarrow \|K^{-1}_{(i-1, j), 3}\|\leq \Upsilon_K\rho^{|i-j|}}.
\end{aligned}
\end{equation}
Combining results in \eqref{pequ:11}, \eqref{pequ:12}, \eqref{pequ:13}, we complete the proof.

\subsection{Proof of Corollary \ref{cor:2}}\label{pf:cor:2}

By Lemma \ref{lem:2}, we  need to check only \eqref{equ:SOSC}, \eqref{equ:control}, and \eqref{equ:bound} at $(\tbz_i^0, \tblambda_i^0)$. For any $i\in[1, T]$, by Corollary \ref{cor:1} and Theorem \ref{thm:1} we know that \eqref{equ:SOSC} holds for subproblem $i$ with the same lower bound of the reduced Hessian. Further, \eqref{equ:control} is implied directly from Assumption \ref{ass:M:control}. In fact, for the horizon $[n_1(i), n_2(i)]$, since $\forall k\in[n_1(i), n_2(i) - t]\subseteq [N-t]$ and $\tbz_i^0\in\N_\epsilon(\tbz_i^\star)$, we have
\begin{align*}
\Xi_{k, t_k}\Xi_{k, t_k}^\top\succeq \gamma_C I,
\end{align*}
where $\Xi_{k, t_k}$ is evaluated at $(\bz_{k:k+t_k-1, i}^0; \bd_{k:k+t_k-1})$. This verifies \eqref{equ:control}. For the upper boundedness, we note that $A_k, B_k$ is bounded by $\Upsilon$. Because of the extra quadratic term in the objective of \eqref{pro:2}, each block in the Hessian is bounded by $\Upsilon + \mu$. By \eqref{equ:set:mu}, we know that the lower bound of $\mu$, $\mu(\Upsilon, \gamma_C, t)$, is independent from $N$. Thus, the uniform upper boundedness condition \eqref{equ:bound} also holds. Combining all these observations and applying Lemma \ref{lem:2}, we complete the proof.

\subsection{Proof of Theorem \ref{thm:2}}\label{pf:thm:2}

We take the $i$-th subproblem as an example. All matrices are evaluated at the initial point $(\tbz_i^0, \tblambda_i^0)$ by default. By \eqref{equ:Newton}--\eqref{equ:Update} and noting that the KKT matrix, $K^i$, is invertible due to Corollary~\ref{cor:1}, Theorem \ref{thm:1}, and LICQ (see Remark \ref{rem:KKT:inv}), we get 
\begin{align}\label{pequ:14}
&\left(\begin{smallmatrix}
\tbz_i^1 - \tbz_i^\star\\
\tblambda_i^1 - \tblambda_i^\star
\end{smallmatrix}\right) = \left(\begin{smallmatrix}
\tbz_i^0 - \tbz_i^\star\\
\tblambda_i^0 - \tblambda_i^\star
\end{smallmatrix}\right) - \left(\begin{smallmatrix}
H^i & (G^i)^\top\\
G^i & 0
\end{smallmatrix}\right)^{-1}\left(\begin{smallmatrix}
\nabla_{\tbz_i}\mL^i\\
\nabla_{\tblambda_i}\mL^i
\end{smallmatrix}\right) \nonumber\\
&=  \left(\begin{smallmatrix}
H^i & (G^i)^\top\\
G^i & 0
\end{smallmatrix}\right)^{-1}\bigg\{\left(\begin{smallmatrix}
H^i & (G^i)\top\\
G^i & 0
\end{smallmatrix}\right)\left(\begin{smallmatrix}
\tbz_i^0 - \tbz_i^\star\\
\tblambda_i^0 - \tblambda_i^\star
\end{smallmatrix}\right) - \bigg( \left(\begin{smallmatrix}
\nabla_{\tbz_i}\mL^i\\
\nabla_{\tblambda_i}\mL^i
\end{smallmatrix}\right) \nonumber\\
& \quad - \left(\begin{smallmatrix}
\nabla_{\tbz_i}\mL^{i \star}\\
\nabla_{\tblambda_i}\mL^{i \star}
\end{smallmatrix}\right)\bigg) - \left(\begin{smallmatrix}
\nabla_{\tbz_i}\mL^{i \star}\\
\nabla_{\tblambda_i}\mL^{i \star}
\end{smallmatrix}\right)\bigg\},
\end{align}
where $\nabla\mL^{i \star} = \nabla \mL^i(\tbz_i^\star, \tblambda_i^\star; \tbd_i)$. Let $H^i_{\eta} = H^i\big(\eta(\tbz_i^0, \tblambda_i^0) + (1-\eta)(\tbz_i^\star, \tblambda_i^\star); \tbd_i\big)$ and $G^i_{\eta} = G^i\big(\eta\tbz_i^0 + (1-\eta)\tbz_i^\star; \tbd_i\big)$. Then
\begin{align*}
\left(\begin{smallmatrix}
\nabla_{\tbz_i}\mL^i\\
\nabla_{\tblambda_i}\mL^i
\end{smallmatrix}\right) - \left(\begin{smallmatrix}
\nabla_{\tbz_i}\mL^{i \star}\\
\nabla_{\tblambda_i}\mL^{i \star}
\end{smallmatrix}\right) = \int_{0}^{1} \left(\begin{smallmatrix}
H^i_\eta & (G^i_\eta)^\top\\
G^i_\eta & 0
\end{smallmatrix}\right) \left(\begin{smallmatrix}
\tbz_i^0 - \tbz_i^\star\\
\tblambda_i^0 - \tblambda_i^\star
\end{smallmatrix}\right) d\eta.
\end{align*}
Plugging into \eqref{pequ:14}, we get
\begin{align}\label{pequ:15}
&\left(\begin{smallmatrix}
\tbz_i^1 - \tbz_i^\star\\
\tblambda_i^1 - \tblambda_i^\star
\end{smallmatrix}\right) = \left(\begin{smallmatrix}
H^i & (G^i)^\top\\
G^i & 0
\end{smallmatrix}\right)^{-1} \bigg\{ - \left(\begin{smallmatrix}
\nabla_{\tbz_i}\mL^{i \star}\\
\nabla_{\tblambda_i}\mL^{i \star}
\end{smallmatrix}\right) \nonumber\\
& \quad\quad\quad+ \int_{0}^{1}\overbrace{ \left(\begin{smallmatrix}
	H^i - H^i_\eta & (G^i)^\top - (G^i_\eta)^\top\\
	G^i - G^i_\eta & 0
	\end{smallmatrix}\right)}^{W^i_\eta \coloneqq K^i - K^i_\eta} \left(\begin{smallmatrix}
\tbz_i^0 - \tbz_i^\star\\
\tblambda_i^0 - \tblambda_i^\star
\end{smallmatrix}\right)d\eta\bigg\}.
\end{align}
Let us denote $W^i_\eta\big(\tbz_i^0 - \tbz_i^\star; \tblambda_i^0 - \tblambda_i^\star\big) = (\Delta\bw; \Delta\bbeta)$ and, as in Appendix \ref{pf:lem:2}, $\Delta\bw = (\Delta\bp_{n_1(i)}; \Delta\bq_{n_1(i)}; \ldots; \Delta\bp_{n_2(i)})$ and $\Delta\bbeta = (\Delta\bbeta_{n_1(i)-1}; \ldots; \Delta\bbeta_{n_2(i)-1})$. We also let $\Delta\bw_k = (\Delta\bp_k; \Delta\bq_k)$ for $k\in[n_1(i), n_2(i)-1]$ and $\Delta\bw_{n_2(i)} = \Delta\bp_{n_2(i)}$. 
Using the matrix partition in Definition \ref{def:5} for $W^i_\eta$, we have $\forall k\in[n_1(i), n_2(i)-1]$
\begin{align}\label{pequ:16}
\|\Delta\bw_k\| = &\|(\Delta\bp_k; \Delta\bq_k)\| \nonumber\\
= &\|(W^i_\eta)_{(k,k),1}(\bz_{k, i}^0 - \tz_k) + (W^i_\eta)_{(k, k), 2}(\blambda_{k, i}^0 - \tlambda_k)\| \nonumber\\
\leq & (1-\eta)\Upsilon_L\|(\bz_{k, i}^0 - \tz_k; \blambda_{k, i}^0 - \tlambda_k)\|\|\bz_{k, i}^0 - \tz_k\| \nonumber\\
&+ (1-\eta)\Upsilon_L\|\bz_{k, i}^0 - \tz_k\|\|\blambda_{k, i}^0 - \tlambda_k\| \nonumber\\
\leq &2(1 - \eta)\Upsilon_L(\Psi_{k, i}^0)^2.
\end{align}
Here, the first equality is due to block matrix multiplication algebra and the fact that $(W_\eta^i)_{(k, n_1(i)-1), 2} = 0$; the first inequality is due to Assumption \ref{ass:M:Lip:cond}. In particular, $(\bz_{k,i}^\eta, \blambda_{k,i}^\eta) \coloneqq \eta(\bz_{k, i}^0, \blambda_{k, i}^0) + (1 - \eta)(\tz_k, \tlambda_k) \in \N_\epsilon(\tz_k, \tlambda_k)$, and (we drop  $\bd_k$)
\begin{align*}
\|(W^i_\eta)_{(k, k), 1}\| = &\|H_k(\bz_{k,i}^0, \blambda_{k,i}^0) - H_k(\bz_{k,i}^\eta, \blambda_{k,i}^\eta)\| \nonumber\\
\leq & (1-\eta)\Upsilon_L\|(\bz_{k, i}^0 - \tz_k; \blambda_{k, i}^0 - \tlambda_k)\|,\\
\|(W^i_\eta)_{(k, k), 2}\| = & \|\nabla_{\bz_k}f_k(\bz_{k, i}^0) - \nabla_{\bz_k}f_k(\bz_{k, i}^\eta)\| \\
\leq& (1-\eta)\Upsilon_L\|\bz_{k, i}^0 - \tz_k\|.
\end{align*}
For $k = n_2(i)$, by Definition \ref{def:4} and Assumption \ref{ass:M:Lip:cond}, we get
\begin{multline*}
\|(W^i_\eta)_{(n_2(i), n_2(i)), 1}\| = \|Q_{n_2(i)}^0 - Q_{n_2(i)}^\eta\| \\
\leq  (1 - \eta)\Upsilon_L\Psi_{n_2(i), i}^0,
\end{multline*}
where $Q_{n_2(i)}^0\coloneqq Q_{n_2(i)}(\bx_{n_2(i), i}^0, \bu_{n_2(i)}^0, \blambda_{n_2(i)}^0)$ and $Q_{n_2(i)}^\eta\coloneqq Q_{n_2(i)}(\bx_{n_2(i), i}^\eta, \bu_{n_2(i)}^0, \blambda_{n_2(i)}^0)$. Thus,
\begin{align}\label{pequ:17}
\|\Delta\bw_{n_2(i)}\| =& \|(W^i_\eta)_{(n_2(i), n_2(i)), 1}(\bx_{n_2(i), i}^0 - \tx_{n_2(i)})\| \nonumber\\
\leq &(1 - \eta)\Upsilon_L(\Psi_{n_2(i), i}^0)^2.
\end{align}
Analogously, we deal with $\Delta\bbeta$. First, $\Delta\bbeta_{n_1(i)-1} = 0$, since $(W_\eta^i)_{(k, n_1(i)-1), 2}^\top = 0$, $\forall k \in[n_1(i), n_2(i)]$. Then, for $k\in[n_1(i), n_2(i)-1]$,
\begin{align}\label{pequ:18}
\|\Delta\bbeta_k\| &= \|(W^i_\eta)^\top_{(k,k),2}(\bz_{k, i}^0 - \tz_k)\| \nonumber\\
\leq& (1 - \eta)\Upsilon_L\|\bz_{k, i}^0 - \tz_k\|^2\leq  (1 - \eta)\Upsilon_L(\Psi_{k, i}^0)^2.
\end{align}
We then simplify the first term in \eqref{pequ:15}. To ease~notations, we denote $(\bv^\star; \balpha^\star)\coloneqq\big(\nabla_{\tbz_i}\mL^{i \star}; \nabla_{\tblambda_i}\mL^{i \star}\big)$ with $\bv^\star = (\bl^\star_{n_1(i)}; \br^\star_{n_1(i)}; \ldots; \bl_{n_2(i)}^\star)$ and $\balpha^\star = (\balpha_{n_1(i)-1}^\star; \ldots; \balpha_{n_2(i)-1}^\star)$. As usual, we let $\bv^\star_k = (\bl_k^\star; \br_k^\star)$. 
By the expression of $\mL^i$ in Definition \ref{def:4} and the first-order necessary condition on $(\tbz_i^\star, \tblambda_i^\star; \\\tbd_i)$, we have
\begin{align}\label{pequ:19}
\bv_{n_1(i):n_2(i)-1}^\star = 0, \quad \balpha_{n_1(i):n_2(i)-1}^\star = 0.
\end{align}
For $\bl_{n_2(i)}^\star$, we let $\bu_{n_2(i)}^\eta = \eta\bu_{n_2(i)}^0 + (1 - \eta)\tu_{n_2(i)}$ (similarly for $\blambda_{n_2(i)}^\eta$) and define (we ignore $\bd_{n_2(i)}$)
\begin{align*}
&\nabla_{\bx_{n_2(i)}}\mL_{n_2(i)}^0 = \nabla_{\bx_{n_2(i)}}\mL_{n_2(i)}(\tx_{n_2(i)}, \bu_{n_2(i)}^0, \tlambda_{n_2(i)-1}, \blambda_{n_2(i)}^0),\\
&\nabla_{\bx_{n_2(i)}}\mL_{n_2(i)}^\star = \nabla_{\bx_{n_2(i)}}\mL_{n_2(i)}(\tx_{n_2(i)}, \bu_{n_2(i)}^\star, \tlambda_{n_2(i)-1}, \blambda_{n_2(i)}^\star).
\end{align*}
Using $\nabla_{\bx_{n_2(i)}}\mL_{n_2(i)}^\star = 0$ as the first-order necessary condition,  we get
\begin{align*}
\bl_{n_2(i)}^\star &=  \nabla_{\bx_{n_2(i)}}\mL_{n_2(i)}^0 + \mu(\tx_{n_2(i)} - \bx_{n_2(i)}^0)\\
=& \nabla_{\bx_{n_2(i)}}\mL_{n_2(i)}^0  - \nabla_{\bx_{n_2(i)}}\mL_{n_2(i)}^\star + \mu(\tx_{n_2(i)} - \bx_{n_2(i)}^0)\\
= & \int_{0}^{1}\begin{pmatrix}
\nabla_{\bx_{n_2(i)}\bu_{n_2(i)}}^2\mL_{n_2(i)}^\eta& A_{n_2(i)}^{T\; \eta}
\end{pmatrix}\begin{pmatrix}
\bu_{n_2(i)}^0 - \tu_{n_2(i)}\\
\blambda_{n_2(i)}^0 - \tlambda_{n_2(i)}
\end{pmatrix}d\eta\\
&+ \mu(\tx_{n_2(i)} - \bx_{n_2(i)}^0),
\end{align*}
where $\mL_{n_2(i)}^\eta = \mL_{n_2(i)}(\tx_{n_2(i)}, \bu_{n_2(i)}^\eta, \tlambda_{n_2(i)-1}, \blambda_{n_2(i)}^\eta)$ and $A_{n_2(i)}^\eta = A_{n_2(i)}(\tx_{n_2(i)}, \bu_{n_2(i)}^\eta)$. By Assumption \ref{ass:M:Ubound} and the fact that $(\bz_{n_2(i)}^0, \blambda_{n_2(i)}^0)\in\N_\epsilon(\tz_{n_2(i)}, \tlambda_{n_2(i)})$, we get
\begin{align}\label{pequ:20}
\|\bl_{n_2(i)}^\star\|\leq &2\Upsilon\|\big(\bu_{n_2(i)}^0 - \tu_{n_2(i)};
\blambda_{n_2(i)}^0 - \tlambda_{n_2(i)}\big)\| \nonumber\\
&+ \mu\|\tx_{n_2(i)} - \bx_{n_2(i)}^0\| \leq  (4\Upsilon + \mu) \epsilon.
\end{align}
Finally, by definition, we have $\balpha_{n_1(i)-1}^\star = \nabla_{\blambda_{n_1(i)-1}}\mL^{i \star} = \tx_{n_1(i)} - \bx_{n_1(i), i-1}^1 = \tx_{n_1(i)} - \bx_{n_1(i), i}^0$. The last equality is~from \eqref{equ:trans1}. Together with \eqref{pequ:16}--\eqref{pequ:20} and plugging into \eqref{pequ:15}, taking integral over $\eta$, and using the structure of $(K^i)^{-1}$ in Corollary \ref{cor:2}, we have $\forall k\in[n_1(i), n_2(i)]$ ($\bz_{n_2(i), i}^\Id = \bx_{n_2(i), i}^\Id$)
\begin{footnotesize}
\begin{align}\label{pequ:21}
& \|\bz_{k, i}^1 - \tz_k\| = \bigg\|\sum_{j_1 = n_1(i)}^{n_2(i)}(K^i)^{-1}_{(k, j_1), 1}\int_{0}^{1}\Delta\bw_{j_1}d\eta \nonumber\\
& \text{\ }  + \sum_{j_2 = n_1(i)-1}^{n_2(i)-1}(K^i)^{-1}_{(k, j_2), 2}\int_{0}^{1}\Delta\bbeta_{j_2}d\eta  - \sum_{j_1 = n_1(i)}^{n_2(i)}(K^i)^{-1}_{(k, j_1), 1}\bv^\star_{j_1} \nonumber\\
&\text{\ }  - \sum_{j_2 = n_1(i)-1}^{n_2(i)-1}(K^i)^{-1}_{(k, j_2), 2}\balpha^\star_{j_2}\bigg\| \nonumber\\
& \leq  \Upsilon_K\Upsilon_L\bigg(\sum_{j_1 = n_1(i)}^{n_2(i)}\rho^{|k - j_1|}(\Psi_{j_1, i}^0)^2 + \sum_{j_2 = n_1(i)}^{n_2(i)-1} \rho^{|k - j_2|}(\Psi_{j_2, i}^0)^2\bigg)\nonumber\\
& \text{\ } + \Upsilon_K\rho^{n_2(i) - k}\big(4\Upsilon + \mu\big)\epsilon  + \Upsilon_K\rho^{k - n_1(i)}\|\bx_{n_1(i), i}^0 - \tx_{n_1(i)}\| \nonumber\\
&  \leq \Upsilon_1\bigg( \sum_{j=n_1(i)}^{n_2(i)}\rho^{|k-j|}(\Psi_{j, i}^0)^2 + \rho^{n_2(i) - k}\epsilon \nonumber\\
&\text{\ } + \rho^{k - n_1(i)}\|\bx_{n_1(i), i}^0 - \tx_{n_1(i)}\|\bigg),
\end{align}
\end{footnotesize}
\hskip -4pt 
where $\Upsilon_1 \coloneqq 2\Upsilon_K\Upsilon_L \vee (4\Upsilon+\mu)\Upsilon_K$. Similarly, we bound multipliers. For $k \in[n_1(i), n_2(i)-1]$,
\begin{footnotesize}
\begin{align}\label{pequ:23}
& \|\blambda_{k, i}^1 - \tlambda_k\| = \bigg\|\sum_{j_1 = n_1(i)}^{n_2(i)}(K^i)^{-1\ T}_{(j_1, k), 2}\int_{0}^{1}\Delta\bw_{j_1}d\eta \nonumber\\
&\text{\ }  + \sum_{j_2 = n_1(i)-1}^{n_2(i)-1}(K^i)^{-1}_{(k, j_2), 3}\int_{0}^{1}\Delta\bbeta_{j_2}d\eta - \sum_{j_1 = n_1(i)}^{n_2(i)}(K^i)^{-1\ T}_{(j_1, k), 2}\bv^\star_{j_1} \nonumber\\
& \text{\ } - \sum_{j_2 = n_1(i)-1}^{n_2(i)-1}(K^i)^{-1}_{(k, j_2), 3}\balpha_{j_2}^\star\bigg\|\nonumber\\
&  \leq  \Upsilon_K\Upsilon_L \bigg(\sum_{j_1 = n_1(i)}^{n_2(i)}\rho^{|k - j_1|}(\Psi_{j_1, i}^0)^2 + \sum_{j_2 = n_1(i)}^{n_2(i)-1}\rho^{|k+1 - j_2|}(\Psi_{j_2, i}^0)^2\bigg) \nonumber\\
& \text{\ } + \Upsilon_K\rho^{n_2(i) - k}(4\Upsilon + \mu)\epsilon + \Upsilon_K\rho^{k +1- n_1(i)}\|\bx_{n_1(i), i}^0 - \tx_{n_1(i)}\| \nonumber\\
&\leq  \Upsilon_2\bigg(\sum_{j=n_1(i)}^{n_2(i)}\rho^{|k-j|}(\Psi_{j, i}^0)^2 + \rho^{n_2(i) - k}\epsilon \nonumber\\
&\text{\ } +  \rho^{k - n_1(i)}\|\bx_{n_1(i), i}^0 - \tx_{n_1(i)}\|\bigg),
\end{align}
\end{footnotesize}
\hskip -4pt 
where $\Upsilon_2 = \frac{2\Upsilon_K\Upsilon_L}{\rho}\vee \Upsilon_K(4\Upsilon_L+\mu)\geq \Upsilon_1$ and the last inequality is due to $\rho^{|k+1 - j_2|}\leq \rho^{|k - j_2|}/\rho$. Combining \eqref{pequ:21}-\eqref{pequ:23} and noting that $\Psi_{k, i}^1\leq \|\bz_{k, i}^1 - \tz\| + \|\blambda_{k, i}^1 - \tlambda_k\|$, we define $\Upsilon_C = 2\Upsilon_2$ and then complete the proof.

\subsection{Proof of Theorem \ref{thm:3}}\label{pf:thm:3}

By the ``discard the tail" technique as  in \eqref{equ:trans1} and \eqref{equ:trans2}, it suffices to show that
\begin{align*}
(\bz_{j_1:j_2, i+1}^0, \blambda_{j_1:j_2, i+1}^0)\in \N_\epsilon(\tz_{j_1:j_2, i+1}, \tlambda_{j_1:j_2, i+1})
\end{align*}
with $j_1 = n_1(i+1)$ and $j_2 =n_2(i+1)-2L$. Since $j_1 = n_1(i) + L$ and $j_2 = n_2(i) - L$, and
\begin{align*}
\bz_{j_1:j_2, i+1}^0 = \bz_{j_1:j_2, i}^1, \quad\quad \blambda_{j_1:j_2, i+1}^0 = \blambda_{j_1:j_2, i}^1,
\end{align*}
we need only show $\Psi_{j_1:j_2, i}^1 \leq \epsilon$. For any $k\in[j_1, j_2]$, by \eqref{equ:error} in Theorem \ref{thm:2}, we get 
\begin{footnotesize}
	\begin{align*}
	\Psi_{k, i}^1\leq & \Upsilon_C\big(\sum_{j=n_1(i)}^{n_2(i)}\rho^{|k-j|}(\Psi_{j, i}^0)^2  + \rho^{k - n_1(i)}\|\bx_{n_1(i), i}^0 - \tx_{n_1(i)}\| \\
	&+ \rho^{n_2(i) - k}\epsilon\big)\\
	\leq & \Upsilon_C \sum_{j=n_1(i)}^{n_2(i)}\rho^{|k-j|}\cdot 3\epsilon^2 + 2\rho^L\Upsilon_C\epsilon \\
	\leq & 3\Upsilon_C\epsilon^2\big(\max_{h\in[n_1(i), n_2(i)]}\sum_{j=n_1(i)}^{n_2(i)}\rho^{|h-j|}\big) + 2\Upsilon_C\epsilon^2 .
	\end{align*}
\end{footnotesize}
\hskip -5pt
The last inequality is due to the condition $\rho^L\leq \epsilon$. For any $h\in[n_1(i), n_2(i)]$,
\begin{align}\label{pequ:24}
\sum_{j=n_1(i)}^{n_2(i)}\rho^{|h-j|} = &1 +\sum_{j=n_1(i)}^{h-1}\rho^{h - j} + \sum_{j=h+1}^{n_2(i)}\rho^{j - h} \nonumber\\
= &\sum_{j=0}^{h - n_1(i)}\rho^j + \sum_{j=1}^{n_2(i) - h}\rho^j \leq \frac{1+\rho}{1 - \rho}.
\end{align}
Combining the above equations, we get
\begin{align*}
\Psi_{k, i}^1\leq (2 \Upsilon_C + 3\Upsilon_C\frac{1+\rho}{1-\rho})\epsilon^2= \frac{\Upsilon_C(5 + \rho)}{1 - \rho}\epsilon^2.
\end{align*}
Using $\frac{\Upsilon_C(5 + \rho)}{1 - \rho}\epsilon \leq 1$ completes the proof.

\subsection{Proof of Theorem \ref{thm:4}}\label{pf:thm:4}

We apply Theorems \ref{thm:1}, \ref{thm:2} and \ref{thm:3} iteratively. For $i = 1$, since $(\tbz_1^0, \tblambda_1^0)$ is initialized by $(\bz^0, \blambda^0)\in \N_\epsilon(\tz, \tlambda)$, by Theorem~\ref{thm:1}, uniform SOSC holds for $\P_1(\tbd_1)$. Thus, $K^1$ is invertible, and \eqref{equ:error} holds for $\P_1(\tbd_1)$. By stability in Theorem~\ref{thm:3}, $(\tbz_2^0, \tblambda_2^0)\in\N_\epsilon(\tbz_2^\star, \tblambda_2^\star)$. Again, using Theorem \ref{thm:1}, we know that uniform SOSC can be applied to $\P_2(\tbd_2)$. Thus, the invertibility of $K^2$ and \eqref{equ:error} are guaranteed for the second problem. Repeating the rationale for all stages and noting that the bounds on the relevant parameters of each subproblem do not change, we finish the proof.

\section{Proofs of Results in Section \ref{sec:4.3}}\label{sec:C}

\subsection{Proof of Theorem \ref{thm:5}}\label{pf:thm:5}

We first define some notations. Let $\Omega_s^i = \max_{k\in O^i_s}\Psi_{k, i}^0$, $\barkappa = \kappa/3$, and let the sequence $\{\bardelta_s\}_s$ be defined as
\begin{align*}
\bardelta_0 = \bardelta_1 = 3\epsilon, \quad \bardelta_2 = \barkappa(\bardelta_1)^2, \quad \bardelta_s = \barkappa^{s-2}(\bardelta_1)^{s-1}, \text{\ } \forall s\geq 3.
\end{align*}
Since $\barkappa\bardelta_1\leq 1$, $\bardelta_s$ is nonincreasing. In what follows, we~will prove $\Omega_s \leq \delta_s \leq \bardelta_s$ for some sequence $\{\delta_s\}_s$, where $\delta_s$ is defined using $\{\bardelta_j\}_{j=0}^{s-1}$.

Starting from the first subproblem, one can easily see $\Omega_0^1\leq 3\epsilon \coloneqq \delta_0\leq \bardelta_0$. Moving to the second subproblem, we have $\Omega_0^2\leq \delta_0\leq \bardelta_0$ and $\Omega_1^2\leq 3\epsilon \coloneqq\delta_1\leq \bardelta_1$ (due to the stability in Theorem \ref{thm:3}). For the third subproblem, we still have $\Omega_0^3\leq \delta_0\leq \bardelta_0$, $\Omega_1^3\leq \delta_1\leq \bardelta_1$, since $(\bz_{k,3}^0, \blambda_{k, 3}^0)  = (\bz_k^0, \blambda_k^0)\in \N_\epsilon(\bz_k^\star, \blambda_k^\star)$ for $k\in O_0^3\cup O_1^3$. Further, by \eqref{equ:error},
\begin{align}\label{pequ:25}
\Omega^3_2 = &\max_{k\in[n_1(3), n_2(1))}\Psi_{k, 3}^0 = \max_{k\in[n_1(2)+L, n_2(2)-L)}\Psi_{k, 2}^1 \nonumber\\
\leq & \Upsilon_C(\bardelta_1)^2\max_{k\in[n_1(2)+L, n_2(2)-L)}\sum_{j=n_1(2)}^{n_2(i)}\rho^{|k-j|} \nonumber\\
&+ \Upsilon_C\rho^L\bardelta_1 + \Upsilon_C\rho^L\epsilon \nonumber\\
\leq & \Upsilon_C\bigg(\frac{1+\rho}{1-\rho}(\bardelta_1)^2 + \rho^L\bardelta_1 + \rho^L\epsilon\bigg)\coloneqq \delta_2,
\end{align}
where the second inequality is due to \eqref{pequ:24}. Note that
\begin{align*}
\delta_2\leq \frac{\Upsilon_C(1+\rho)}{1-\rho}\bigg((\bardelta_1)^2 + \frac{1}{3}(\bardelta_1)^2 + \frac{1}{9}(\bardelta_1)^2\bigg)\leq \barkappa(\bardelta_1)^2 = \bardelta_2.
\end{align*}
Thus, we have $\Omega_2^3\leq \delta_2\leq \bardelta_2$. Next, we move to the $4$th subproblem. Since the iterates on the tail are set to $(\bz_k^0, \blambda_k^0)$ (the same as  for the third subproblem), $\Omega_0^4\leq \delta_0\leq \bardelta_0$, $\Omega_1^4\leq \delta_1\leq \bardelta_1$. Further, we claim that $\Omega_2^4\leq \delta_2$. In fact, we have that
\begin{align}\label{pequ:27}
\Omega_2^4 =& \max_{k\in[n_2(1), n_2(2))}\Psi_{k, 4}^0\leq \max_{k\in[n_1(4), n_2(2))}\Psi_{k, 3}^1 \nonumber\\
=& \max_{k\in[n_1(3)+L, n_2(3)-L)}\Psi_{k, 3}^1,
\end{align}
and bounding the right-hand side term follows the same derivation as bounding $\max_{k\in[n_1(2)+L, n_2(2)-L)}\Psi_{k, 2}^1$ in \eqref{pequ:25}, because $\Omega_2^3\leq \bardelta_2\leq \bardelta_1$ (thus, bounds on the third subproblem can be degraded to the ones on the second subproblem). For bounding $\Omega_3^4$, by \eqref{equ:error}, we get 
\begin{footnotesize}
\begin{align*}
\Omega_3^4 = &\max_{k\in[n_1(4), n_2(1))}\Psi_{k, 4}^0 = \max_{k\in[n_1(3)+L, n_2(1))}\Psi_{k, 3}^1\\
\leq &\Upsilon_C\bigg(\max_{k\in[n_1(3)+L, n_2(1))}\sum_{j=n_1(3)}^{n_2(3)}\rho^{|k - j|}(\Psi_{k, 3}^0)^2 + \rho^L\bardelta_2 + \rho^{2L}\epsilon\bigg)\\
= & \Upsilon_C\bigg(\max_{k\in[n_1(3)+L, n_2(1))}\sum_{j=n_1(3)}^{n_2(1)-1}\rho^{|k-j|}(\bardelta_2)^2 \\
& + \max_{k\in[n_1(3)+L, n_2(1))}\sum_{j=n_2(1)}^{n_2(3)}\rho^{|k-j|}(\bardelta_1)^2 + \rho^L\bardelta_2 + \rho^{2L}\epsilon\bigg)\\
\leq & \Upsilon_C\bigg(\frac{1+\rho}{1-\rho}(\bardelta_2)^2 + \frac{\rho}{1-\rho}(\bardelta_1)^2 + \rho^L\bardelta_2 + \rho^{2L}\epsilon\bigg) \coloneqq\delta_3.
\end{align*}
\end{footnotesize}
\hskip -4pt
Using the definition of $\delta_2$ in \eqref{pequ:25}, we have 
\begin{footnotesize}
\begin{align*}
\delta_3 =& \Upsilon_C\bigg(\frac{1+\rho}{1-\rho}(\bardelta_2)^2 + \frac{\rho - \rho^L(1+\rho)}{1-\rho}(\bardelta_{1})^2 + \frac{\rho^L}{\Upsilon_C}\delta_2 + \rho^L\bardelta_2 - \rho^{2L}\bardelta_1\bigg)\\
\leq & \frac{2\Upsilon_C(1+\rho)}{1-\rho}\big((\bardelta_2)^2 + \epsilon\bardelta_2 + (\bardelta_1)^2\big)\leq \frac{4\Upsilon_C(1+\rho)}{1-\rho}\big(\epsilon\bardelta_2 + (\bardelta_1)^2\big)\\
\leq &\frac{16\Upsilon_C(1+\rho)}{3(1-\rho)}(\bardelta_1)^2\leq \bardelta_3.
\end{align*}
\end{footnotesize}
\hskip -4pt
Here, the first inequality is due to $\delta_2\leq \bardelta_2$; the second inequality is due to $\bardelta_2\leq \bardelta_1$; and the third inequality is due to $\bardelta_2\leq \bardelta_1$ and $\epsilon = \bardelta_1/3$. Using the relation $\delta_3\leq \bardelta_3$, we can further show that $\Omega^5_s\leq \delta_s$, for $s = 0,1,2,3$, since bounds for the $4$th subproblem can be degraded to the ones for the third (the same reason as \eqref{pequ:27}). We can also bound $\Omega^5_4$ similarly. In general, for $s\geq 2$, we apply \eqref{equ:error} and get
\begin{footnotesize}
\begin{align*}
&\Omega_s^{s+1} = \max_{k\in[n_1(s+1), n_2(1))}\Psi_{k, s+1}^0 = \max_{k\in[n_1(s)+L, n_2(1))}\Psi_{k, s}^1\\
&\leq  \Upsilon_C\bigg(\max_{k\in[n_1(s)+L, n_2(1))}\sum_{j=n_1(s)}^{n_2(1)-1}\rho^{|k-j|}(\Psi_{j, s}^0)^2 \\
&\text{\ } + \max_{k\in[n_1(s)+L, n_2(1))}\sum_{h=1}^{s-3}\sum_{j=n_2(h)}^{n_2(h+1)-1}\rho^{|k-j|}(\Psi_{j,s}^0)^2\\
& \text{\ } + \max_{\substack{k\in[n_1(s)+L,\\ n_2(1))}}\sum_{j=n_2(s-2)}^{n_2(s)}\rho^{|k-j|}(\Psi_{j,s}^0)^2 + \rho^L\bardelta_{s-1} + \rho^{(s-1)L}\epsilon\bigg)\\
\leq & \Upsilon_C\bigg(\frac{1+\rho}{1-\rho}(\Omega_{s-1}^s)^2 + \sum_{h=1}^{s-3}(\Omega^s_{s-1-h})^2\underbrace{\max_{\substack{k\in[n_1(s)+L,\\ n_2(1))}}\sum_{j=n_2(h)}^{n_2(h+1)-1}\rho^{|k-j|}}_{\leq \frac{\rho^{1+(h-1)L}}{1-\rho}} \\
&\text{\ } + \frac{\rho^{1+(s-3)L}}{1-\rho}(\Omega_{1}^s)^2 + \rho^L\bardelta_{s-1} + \rho^{(s-1)L}\epsilon\bigg)\\
\leq & \Upsilon_C\bigg(\frac{1+\rho}{1-\rho}(\bardelta_{s-1})^2 + \frac{\rho}{1-\rho}\big((\bardelta_{s-2})^2 + \rho^L(\bardelta_{s-3})^2 \\
&\text{\ } + \cdots + \rho^{(s-3)L}(\bardelta_1)^2\big) + \rho^L\bardelta_{s-1} + \rho^{(s-1)L}\epsilon\bigg)\\
\coloneqq &\delta_s.
\end{align*}
\end{footnotesize}
\hskip -4pt
Here, the second inequality uses the fact that $\Omega_1^s = \Omega_0^s$ (``discard the tail" technique). For $s\geq 3$, we rewrite $\delta_{s-1}$ as
\begin{footnotesize}
\begin{multline*}
\rho^L(\bardelta_{s - 3})^2 + \cdots + \rho^{(s-3)L}(\bardelta_1)^2 \\= \frac{\rho^L(1-\rho)}{\rho}\cdot\bigg(\frac{\delta_{s-1}}{\Upsilon_C} - \frac{1+\rho}{1-\rho}(\bardelta_{s-2})^2 - \rho^L\bardelta_{s-2} - \rho^{(s-2)L}\epsilon\bigg).
\end{multline*}
\end{footnotesize}
\hskip -4pt
Combining these two displays, we obtain 
\begin{footnotesize} 
\begin{align}\label{pequ:26}
\delta_s =& \Upsilon_C\bigg(\frac{1+\rho}{1-\rho}(\bardelta_{s-1})^2 + \frac{\rho - \rho^L(1+\rho)}{1-\rho}(\bardelta_{s-2})^2 + \frac{\rho^L}{\Upsilon_C}\delta_{s-1} \nonumber\\
& +  \rho^L\bardelta_{s-1} - \rho^{2L}\bardelta_{s-2}\bigg) \nonumber\\
\leq & \frac{2\Upsilon_C(1+\rho)}{1-\rho}\big((\bardelta_{s-1})^2 + \epsilon\bardelta_{s-1} + (\bardelta_{s-2})^2\big) \nonumber\\
\leq & \frac{4\Upsilon_C(1+\rho)}{1-\rho}\big( \epsilon\bardelta_{s-1} + (\bardelta_{s-2})^2\big),
\end{align}
\end{footnotesize}
\hskip -4pt
where the first inequality uses the condition $\delta_{s-1}\leq \bardelta_{s-1}$ and $\rho^L\leq \epsilon$ and the second inequality uses the fact that $\bardelta_{s-1}\leq \bardelta_{s-2}$. Then, we only need check $\delta_s\leq \bardelta_s$ for $s\geq 4$. For $s = 4$,
\begin{align*}
\delta_4\leq &\frac{4\Upsilon_C(1+\rho)}{1-\rho}\big(\epsilon\bardelta_3 + (\bardelta_2)^2\big) = \frac{3\barkappa}{4}\big(\epsilon\barkappa(\bardelta_1)^2 + \barkappa^2(\bardelta_1)^4\big) \\
=& \barkappa^2(\bardelta_1)^3\big(\frac{1}{4}+\frac{3\barkappa\bardelta_1}{4}\big)\leq \bardelta_4.
\end{align*}
For $s\geq 5$, we have $(\bardelta_{s-2})^2  = \barkappa^{2s-8}(\bardelta_1)^{2s-6} = \epsilon\barkappa^{s-3}(\bardelta_1)^{s-2}\cdot 3\barkappa^{s-5}(\bardelta_1)^{s-5}\leq 3\epsilon\bardelta_{s-1}$. Hence,
\begin{align*}
\delta_s\leq& \frac{4\Upsilon_C(1+\rho)}{1-\rho}\big(\epsilon\bardelta_{s-1} + (\bardelta_{s-2})^2\big)\leq \frac{16\Upsilon_C(1+\rho)}{1-\rho}\epsilon\bardelta_{s-1} \\
= &\barkappa\bardelta_1\bardelta_{s-1} = \bardelta_s.
\end{align*}
Until now, we have shown that a nonincreasing sequence $\{\bardelta_s\}$ controls $\delta_s$, which is the upper bound of $\Omega_{s}^{s+1}$. We only need show that $\Omega_s$, $\forall s$, is also bounded by $\delta_s$ (we partially  demonstrated this in \eqref{pequ:27}). In fact, $\Omega_s = \max_{i\in[1, T]}\Omega_s^i = \max_{i\in[s+1, T]}\Omega_s^i$. For $i \in[s+2, T]$, $\Omega^i_s$ can be bound with the same derivation as bounding $\Omega^{i-1}_s$ and results in an upper bound $\delta_s$, since the $(i-1)$th subproblem (which is used to bound $\Omega^i_s$) can be degraded to have the same error structure as the $(i-2)$th subproblem (which is used to bound $\Omega^{i-1}_s$); see \eqref{pequ:27} for an example. Therefore, $\Omega_s\leq \delta_s\leq \bardelta_s$. This completes the proof.

\end{document}